\newtheorem{theorem}{Theorem}[section]
\newtheorem{lemma}[theorem]{Lemma}
\newtheorem{proposition}[theorem]{Proposition}
\newtheorem{corollary}[theorem]{Corollary}
\theoremstyle{definition}
\newtheorem{definition}[theorem]{Definition}
\newtheorem{example}[theorem]{Example}
\newtheorem{remark}[theorem]{Remark}
\numberwithin{thmcounter}{section}
\newaliascnt{thmauto}{thmcounter}
\newaliascnt{Propauto}{thmcounter}
\newaliascnt{Defauto}{thmcounter}
\newaliascnt{exauto}{thmcounter}
\newaliascnt{quesauto}{thmcounter}
\newaliascnt{goalauto}{thmcounter}
\newaliascnt{conjauto}{thmcounter}
\newcommand{\cA}{\mathcal{A}}
\newcommand{\cC}{\mathcal{C}}
\newcommand{\hS}{\widehat{\rm S}}
\newcommand{\cU}{\mathcal{U}}
\newcommand{\C}{\mathbb{C}}
\newcommand{\im}{\mathrm{im}}
\newcommand{\id}{\mathrm{id}}
\newcommand{\I}{{\bf I}}
\newcommand{\Lie}{\mathrm{Lie}}
\newcommand{\cs}{cs}
\newcommand{\B}{\widehat{{\rm PBr}}}
\newcommand{\hBr}{\widehat{\rm Br}}
\newcommand{\PBr}{{\rm PBr}}
\newcommand{\Br}{{\rm Br}}
\newcommand{\bC}{\mathbb{C}}
\newcommand{\bL}{\mathbb{L}}
\newcommand{\bS}{\mathbf{S}}
\newcommand{\bbF}{\mathbb{F}}\newcommand{\bbZ}{\mathbb{Z}}
\newtheorem{atheorem}{Theorem}
\newcommand{\rS}{{\rm S}}
\newcommand{\Sym}{\mathbb{\mr{Sym}}}
\newcommand{\Conf}{\mathrm{Conf}}
\newcommand{\into}{\hookrightarrow}
\newcommand{\onto}{\twoheadrightarrow}
\newcommand{\FI}{\mathrm{FI}}
\newcommand{\hFI}{\widehat{\mathrm{FI}}}
\newcommand{\sgn}{\mathrm{sgn}}
\newcommand{\iso}{\cong}
\newcommand{\R}{\mathbb{R}}
\newcommand{\Q}{\mathbb{Q}}
\newcommand{\Z}{\mathbb{Z}}
\newcommand{\m}{\to}
\newcommand{\mr}[1]{{\rm #1}}
\newcommand{\coker}{\mr{coker}}
\newcommand{\Hom}{\mathrm{Hom}}
\newcommand{\Mod}{\mathrm{Mod}}
\newcommand{\Rep}{\mathrm{Rep}}
\newcommand{\Tor}{\mathrm{Tor}}
\newcommand{\Ind}{\mathrm{Ind}}
\DeclareMathOperator{\rA}{A}
\newcommand{\inv}{^{-1}}
\newcommand{\rB}{ B}
\title{Representation stability for pure braid group Milnor fibers}
\date{\today}
\author{Jeremy Miller}
\address{Department of Mathematics, Purdue University, West Lafayette, IN}
\email{\href{mailto:jeremykmiller@purdue.edu}{jeremykmiller@purdue.edu}}
\urladdr{\url{https://www.math.purdue.edu/~mill1834/}}
\thanks{Jeremy Miller was supported in part by NSF grant DMS-1709726.}
\author{Philip Tosteson}
\address{Department of Mathematics, University of Chicago, Chicago, IL}
\email{\href{mailto:ptoste@math.uchicago.edu}{ptoste@math.uchicago.edu}}
\urladdr{\url{http://www-personal.umich.edu/~ptoste/}}
\begin{document}

\begin{abstract}  
We prove a representation stability result for the Milnor fiber associated to the pure braid group. Our result connects previous work of Simona Settepenella to representation stability in the sense of Church--Ellenberg--Farb, answering a question of Graham Denham. We also use our result to compute the stable integral homology of the Milnor fiber.   
\end{abstract}

\maketitle
\tableofcontents

\date{\today}





\maketitle
\section{Introduction}
\label{sec:intro}

Let $\Conf_n(\C)$ denote the configuration space of $n$ points in the complex plane. This configuration space is a hyperplane complement and we will study the homology of the associated Milnor fiber \[F_n=\left\{(x_1,\ldots, x_n) ~ |  ~\prod_{i <j}( x_i-x_j)=1  \right \} \subseteq \Conf_n(\C). \] The Milnor fiber $F_n$  admits two natural group actions:  the alternating group $\rA_n$  acts by permuting the coordinates,  and the ${n \choose 2}$th  roots of unity act by multiplying the coordinates.   In fact, we can extend these actions to the action of a single group  $$\hS_n  :=  \{(\sigma, d) \in \rS_n \times \mathbb Z ~|~ d \text{ odd }  \iff {\rm sgn } \sigma = {-1}\},  ~ n \geq 2. $$  The element  $(\sigma, d)$ acts by    $$(x_1, \ldots, x_n) \mapsto  (\zeta_{n(n-1)}^d~ x_{\sigma(1)},\ldots,\zeta_{n(n-1)}^d~ x_{\sigma(n)}),  $$  where   $\zeta_k := \exp(\frac{2 \pi i }{k})$ is the distinguished primitive $k$th root of unity.  

In \cite[Theorem 1.1]{Sett}, Settepanella showed that, for $n \geq 3k-2$,  the action by roots of unity on rational homology is trivial. This result prompted Denham to ask whether the homology of the Milnor fiber exhibits any form of representation stability  \cite[Problem 10]{MFOgram}. In this paper, we establish a representation stability result for the homology of $F_n$ which incorporates the action of $\hS_n$.

 
To state this theorem,  we use a category $\hFI$ which is built out of quotients of  $\hS_n$  in the same way as the category $\FI$, of finite sets and injections, is built from $\rS_n$.  
We define an action of $\hFI$ on $H_*(F_n, \bbZ)$,  and prove a finite generation result.

\begin{atheorem} \label{theoremA}
For all $i$, the sequence $\{H_i(F_n,\Z)\}_n$ is a finitely generated $\hFI$-module. 
\end{atheorem}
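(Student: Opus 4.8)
The plan is to convert the question into one about the configuration space $\Conf_n(\C)$ with twisted coefficients, where the machinery developed for $\FI$ can be imitated. The polynomial $\delta_n := \prod_{i<j}(x_i-x_j)$ is homogeneous of degree $\binom{n}{2}$ on $\C^n$, so its restriction to $\Conf_n(\C) = \C^n \setminus \delta_n^{-1}(0)$ is a locally trivial fibration over $\C^*$ with fiber $F_n$ and geometric monodromy given by multiplication by $\zeta_{\binom{n}{2}}$. Pulling this fibration back along the universal cover $\C \to \C^*$ yields a space homotopy equivalent to $F_n$ which is simultaneously the infinite cyclic cover of $\Conf_n(\C)$ classified by the total winding-number homomorphism $w_n \colon \pi_1\Conf_n(\C) \to \Z$, the map sending each standard pure-braid generator to $1$. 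Hence $H_i(F_n;\Z) \cong H_i(\Conf_n(\C); \cL_n)$, where $\cL_n$ is the rank-one local system of $\Z[t^{\pm1}]$-modules attached to $w_n$ and the deck transformation $t$ acts as the monodromy, that is, as the generator $\zeta_{\binom{n}{2}}$ of $\mu_{\binom{n}{2}} \subset \hS_n$; the $\mu_{\binom{n}{2}}$- and $\rA_n$-actions of the introduction become the evident ones on the right-hand side.

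Next I would reconcile the $\hFI$-module structure with this description. Geometrically, the $\hFI$-operations are built from the recipe ``adjoin a point far from the given configuration, then rescale back onto $F_n$'', in which the rescaling factor is determined only up to a $\binom{n}{2}$-th root of unity; recording that ambiguity alongside the relabelling is precisely the data that $\hFI$ carries beyond $\FI$, with $\Aut_{\hFI}(n) = \hS_n$ acting through its action on $F_n$. On the configuration-space side the underlying maps are the usual stabilizations $\Conf_m(\C) \to \Conf_n(\C)$, and since these send each pure-braid generator $A_{ij}$ with $i < j \le m$ to the corresponding generator in $\pi_1\Conf_n(\C)$, the homomorphism $w_n$ restricts to $w_m$; thus $\cL_m$ is pulled back from $\cL_n$ and one obtains induced maps $H_i(\Conf_m(\C);\cL_m) \to H_i(\Conf_n(\C);\cL_n)$ compatible with the covering deck actions. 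Assembling these together with the $\hS_n$-actions produces the $\hFI$-module of the theorem, naturally identified with $\{H_i(F_n;\Z)\}_n$.

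To prove finite generation I would transport whichever argument establishes that $\{H_i(\Conf_n(\C);\Z)\}_n$ is a finitely generated $\FI$-module to this twisted, $\hFI$-equivariant setting, replacing constant coefficients by the local systems $\cL_n$ and $\rS_n$-equivariance by $\hS_n$-equivariance. Concretely, one runs a highly connected semisimplicial resolution of $\Conf_n(\C)$ obtained by deleting points one at a time: the connectivity input concerns only the spaces and so persists, the terms of the resolution are configuration spaces of punctured planes to which $\cL_n$ restricts as an analogous winding-number system, and the associated spectral sequence (or the central-stability presentation it yields) expresses $\{H_i(F_n;\Z)\}_n$ in terms of induced $\hFI$-modules that are finitely generated by the corresponding statement for punctured planes. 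Local Noetherianity of $\hFI$ then forces $\{H_i(F_n;\Z)\}_n$ itself to be finitely generated. As a consistency check on the eventual stable picture, the Wang sequence of the cyclic cover together with Settepanella's theorem that the monodromy acts trivially on $H_i(F_n;\Q)$ for $n \geq 3i - 2$ already recovers the rational form of Theorem~\ref{theoremA} from finite generation of $\{H_i(\Conf_n(\C);\Q)\}_n$.

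The main obstacle lies in the third step, and specifically in the $\hFI$-bookkeeping: one must check that deleting a point interacts coherently with both the cyclic cover and the root-of-unity datum so that the resolution is genuinely one of $\hFI$-modules and its homology really is assembled from finitely generated induced $\hFI$-modules; one must track consistently that the coefficient systems $\cL_n$ themselves vary with $n$, which is possible above only because $w_n$ restricts to $w_m$; and one must establish local Noetherianity of $\hFI$, presumably through the general criteria for categories of $\FI$-type. It is exactly the step from the easy rational Wang-sequence argument to the integral statement that forces this heavier machinery.
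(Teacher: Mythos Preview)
Your outline shares the paper's broad strategy: a highly connected semisimplicial resolution (the paper's $X_\bullet(N)_n$, with connectivity from Hatcher--Wahl, Proposition~\ref{HatcherWahl}) feeding into a spectral sequence (Proposition~\ref{SSexists}). Your reformulation via the rank-one local system $\cL_n$ on $\Conf_n(\C)$ is equivalent to the paper's description of $F_n$ as a $K(\pi,1)$ for $\ker(\PBr_n \to \Z)$ and appears there as the cover $F_n'$.

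The gap is in how you close the argument. You appeal to local Noetherianity of $\hFI$, which the paper neither proves nor uses; the ``general criteria for categories of $\FI$-type'' do not apply directly, since $\hS_n$ contains the infinite group $\Z$. More seriously, your claim that the spectral sequence expresses $H_i(F_n;\Z)$ ``in terms of induced $\hFI$-modules that are finitely generated by the corresponding statement for punctured planes'' hides a circularity: those terms are inductions of $\Sigma^N H_q(F)$, whose finite generation for $q \leq i$ is exactly what is at stake, and finite generation alone is in any case not what the spectral sequence consumes---it needs \emph{vanishing of central stability homology in a range}. The paper supplies the missing inductive mechanism via the stable degree $\delta$ and local degree $h^{\max}$ (Lemmas~\ref{deltaBound} and~\ref{lemmaLocal}): assuming bounds for $q<i$, one shows $\Sigma^N H_q(F)$ is a semi-induced $\FI$-module with controlled generation degree, hence has vanishing $H^{\cs,\FI}_*$ in a range (Corollary~\ref{MWvanishingCor}), which via Proposition~\ref{csComparison} forces $H^{\cs,\hFI}_{-1}(\Sigma^N H_i(F))_n = 0$ for $n > 2i$ and then bounds $h^{\max}(H_i(F))$. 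Solving the recursion gives generation degree $\leq 8i + 3i^2$. This quantitative bookkeeping---not Noetherianity---is the engine of the proof, and it is what your proposal is missing; as a side benefit, it yields the explicit ranges of Theorems~\ref{theoremB} and~\ref{theoremC}, which a bare Noetherianity argument would not.
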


See Definition \ref{defGen} for a definition of finite generation and see Theorem \ref{mainRange} for a version of Theorem \ref{theoremA} with explicit stability bounds.

This theorem has several consequences. We show that if $M_n$ is a finitely generated $\hFI$-module, then  for $n$ sufficiently large, the subgroup $\bbZ \subset \hS_n$ acts trivially and hence $M_n$ agrees with a finitely generated $\FI$-module in a stable range.  In this way, Theorem \ref{theoremA} incorporates features of  both the phenomenon Settepanella established and representation stability for symmetric group representations.


\begin{atheorem} \label{theoremB}
For all $n \geq 2+8i+3 i^2$, the roots of unity $\mu_{n \choose 2}$ act trivially on $H_i(F_n, \bbZ)$. In this range, $H_i(F_n, \bbZ)$ agrees with a finitely generated $\FI$-module. 
 
\end{atheorem}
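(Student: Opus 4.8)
The plan is to obtain Theorem~\ref{theoremB} by combining Theorem~\ref{theoremA} --- in the quantitative form of Theorem~\ref{mainRange} --- with a general statement about finitely generated $\hFI$-modules, and then to chase the constants. The general statement I would isolate and prove is: \emph{if $M$ is an $\hFI$-module generated in degrees $\le g$, then for every $n > g$ the subgroup $\Z \subset \hS_n$ acts trivially on $M_n$, and there is a finitely generated $\FI$-module $N$ with $N_n \cong M_n$ as $\rS_n$-representations, compatibly with transition maps, for all $n > g$.} Granting this, Theorem~\ref{theoremB} is the special case $M = \{H_i(F_n,\Z)\}_n$: the subgroup $\Z \subset \hS_n$ surjects onto the deck group $\mu_{\binom{n}{2}}$, so a trivial $\Z$-action forces a trivial $\mu_{\binom{n}{2}}$-action, and Theorem~\ref{mainRange} provides a generation degree $g(i)$ with $g(i) \le 1 + 8i + 3i^2$, so that both conclusions hold once $n \ge 2 + 8i + 3i^2$.

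To prove the triviality of the $\Z$-action I would argue on generators. Since $M$ is generated in degrees $\le g$ and $n > g$, every element of $M_n$ is a $\Z$-linear combination of push-forwards $\phi_*x$ with $x \in M_m$, $m \le g < n$, and $\phi \colon [m] \to [n]$ a non-isomorphism of $\hFI$. If $z$ denotes a generator of $\Z \subset \hS_n = \Aut_{\hFI}([n])$, then $z \cdot \phi_*x = (z \circ \phi)_*x$, so it suffices to know that $z \circ \phi = \phi$ for every non-isomorphism $\phi$. This is where the structure of $\hFI$ is used: the ``new'' morphisms of $\hFI$ (those that are not isomorphisms) do not see the $\Z$-twist --- the transition morphisms $\hFI([m],[n])$ with $m < n$ agree with $\FI([m],[n])$, reflecting the geometric fact that different choices of how to add points and rescale induce the same map on homology --- so the postcomposition action of $\Aut_{\hFI}([n]) = \hS_n$ on $\hFI([m],[n])$ factors through the quotient $\hS_n \twoheadrightarrow \rS_n$. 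As $\Z$ is precisely the kernel of this quotient, $z$ fixes $\phi$, hence fixes $\phi_*x$, and therefore $\Z$ --- and a fortiori $\mu_{\binom{n}{2}}$ --- acts trivially on $M_n$ for all $n > g$.

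For the second half, once $\Z$ acts trivially on $M_n$ for $n > g$ the $\hS_n$-action on $M_n$ factors through $\rS_n$, and together with the transition maps this exhibits the tail $\{M_n\}_{n > g}$ as an $\FI$-module. It is finitely generated: for $n \ge g+1$, any push-forward $\phi_*x$ with $m \le g$ factors through $[g+1]$ (since $m \le g < g+1 \le n$), so $M_n$ lies in the $\FI$-span of $M_{g+1}$, and $M_{g+1} = H_i(F_{g+1},\Z)$ is a finitely generated abelian group. Thus there is a finitely generated $\FI$-module $N$, generated in degree $\le g+1$, with $N_n = M_n$ for $n \ge g+1$, which is the desired agreement. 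If one wanted the sharpest possible range, one would also feed in the relation bounds of Theorem~\ref{mainRange} to control the presentation degree of $N$, but generation suffices for the statement as phrased.

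The step I expect to be the main obstacle is the first half of the general lemma --- pinning down in the chosen model of $\hFI$ that the non-isomorphisms genuinely kill the central $\Z$, and doing so with a \emph{linear} dependence on the generation degree rather than a worse one. It is this linear dependence, applied to the generation degree $g(i)$ coming from the input behind Theorem~\ref{mainRange} (which grows quadratically in $i$), that produces the precise threshold $2 + 8i + 3i^2$. By contrast, the rational triviality of the $\mu_{\binom{n}{2}}$-action is already essentially Settepanella's theorem together with a transfer argument; the role of the $\hFI$-module structure here is to upgrade that to an integral statement and to make the stable range explicit.
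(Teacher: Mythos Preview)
Your overall strategy is exactly the paper's: deduce Theorem~\ref{theoremB} from Theorem~\ref{mainRange} together with the structural fact that a finitely generated $\hFI$-module eventually becomes an $\FI$-module (the paper's Proposition~\ref{shiftIsFI}). However, the version of that structural fact you state and prove is not correct, and you only land on the right threshold $2+8i+3i^2$ by making two errors that cancel.

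The gap is in the claim that ``the transition morphisms $\hFI([m],[n])$ with $m<n$ agree with $\FI([m],[n])$.'' This fails precisely at codimension one: since $\hS_1$ is trivial, $\hFI(n-1,n)=\hS_n/i_2(\hS_1)=\hS_n$, whereas $\FI(n-1,n)=\rS_n$, and the central element $(e,2)\in\hS_n$ acts nontrivially on $\hFI(n-1,n)$ by left multiplication. The correct statement (which the paper records just before Proposition~\ref{shiftIsFI}) is that $\hFI(m,n)\to\FI(m,n)$ is an isomorphism only for $m\le n-2$; equivalently, $(e,2)_n$ fixes $\hFI(m,n)$ only when $n-m\ge 2$, because that is exactly when $(e,2)$ lies in $i_2(\hS_{n-m})$. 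Hence your general lemma should read: if $M$ is generated in degrees $\le g$, then $(e,2)_n$ acts trivially on $M_n$ for all $n\ge g+2$ (not $n>g$). With this correction, plugging in the actual bound $g\le 8i+3i^2$ from Theorem~\ref{mainRange} (not $1+8i+3i^2$) gives $n\ge 2+8i+3i^2$, recovering the stated range. The second half of your argument, producing a finitely generated $\FI$-module agreeing with $M$ on this tail, then goes through as written.
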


In particular, the rational  $S_n$ representations $H_i(F_n, \Q)$ exhibit representation stability in the sense of Church--Farb \cite[Definition 1.1]{CF} (see Church--Ellenberg--Farb \cite[Theorem 1.13]{CEF}). In \cite[Theorem 1.2]{Sett}, Settepanella computed the groups $H_i(F_n, \Q)$ in a stable range. Using our results, we are able to extend this to an integral calculation. 


\begin{atheorem} \label{theoremC} For $n \geq 2+8i+ei^2$, there is an $\rS_n$-equivariant injection: \[H_i(F_n, \Z) \to H_i(\Conf_n(\C)/\C^*, \Z).\]  The cokernel agrees with a finitely generated $\FI$-module consisting of torsion abelian groups. 
\end{atheorem}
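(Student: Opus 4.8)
The plan is to exploit the covering space structure relating $F_n$ to $\Conf_n(\C)/\C^*$. The quotient $\Conf_n(\C)/\C^*$ is the quotient of $\Conf_n(\C)$ by the scaling action of $\C^*$, and $F_n \to \Conf_n(\C)/\C^*$ is a covering map with deck group $\mu_{\binom n2}$ (the roots of unity acting on $F_n$ by scaling the coordinates). Indeed, two points of $F_n$ map to the same point of $\Conf_n(\C)/\C^*$ precisely when they differ by scaling, and the only scalars preserving the equation $\prod_{i<j}(x_i - x_j) = 1$ are the $\binom n2$th roots of unity. The transfer map for this covering, together with the projection, gives maps
\[
H_i(\Conf_n(\C)/\C^*, \Z) \xrightarrow{\ \mr{tr}\ } H_i(F_n, \Z) \xrightarrow{\ \pi_*\ } H_i(\Conf_n(\C)/\C^*, \Z),
\]
whose composite is multiplication by $\binom n2$. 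The point is that after applying Theorem~\ref{theoremB}, in the stated range $\mu_{\binom n2}$ acts trivially on $H_i(F_n,\Z)$, so $\mr{tr}\circ\pi_*$ acts on $H_i(F_n,\Z)$ as multiplication by $\binom n2$ as well, which is not obviously helpful integrally; instead I want to run the argument through the Cartan--Leray (or transfer) spectral sequence $H_p(\mu_{\binom n2}; H_q(F_n,\Z)) \Rightarrow H_{p+q}(\Conf_n(\C)/\C^*, \Z)$ and use the triviality of the $\mu_{\binom n2}$-action on the relevant $H_q(F_n,\Z)$'s.

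Concretely, the first step is to establish that the edge map $H_i(F_n,\Z)_{\mu_{\binom n2}} \to H_i(\Conf_n(\C)/\C^*, \Z)$ is injective with the right cokernel. Since $\mu_{\binom n2}$ is a finite cyclic group, $H_p(\mu_{\binom n2}; M)$ is annihilated by $\binom n2$ for $p > 0$ and any module $M$; moreover when the action on $M$ is trivial, $H_0 = M$, $H_1 = M \otimes \Z/\binom n2$, etc. So once we know (from Theorem~\ref{theoremB}, applied in degrees $\le i$ — note the range $n \ge 2 + 8i + ei^2$ is what allows all the needed degrees to be in the stable range simultaneously, with $e$ chosen as $3$ or whatever constant makes the bookkeeping work) that $\mu_{\binom n2}$ acts trivially on $H_q(F_n,\Z)$ for all $q \le i$, the $E^2$-page in total degree $i$ consists of $H_i(F_n,\Z)$ on the bottom row together with torsion groups (each a quotient of $H_q(F_n,\Z)\otimes(\text{something killed by }\binom n2)$) in the higher rows. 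The second step is to observe that the bottom-row edge map $H_i(F_n,\Z) \to H_i(\Conf_n(\C)/\C^*,\Z)$ is a well-defined map of $\FI$-modules: by Theorem~\ref{theoremA} and the discussion following Theorem~\ref{theoremB}, $\{H_i(F_n,\Z)\}$ agrees with a finitely generated $\FI$-module in the stable range, and $\{H_i(\Conf_n(\C)/\C^*,\Z)\}$ is a finitely generated $\FI$-module (this is classical — it is the homology of the quotient of configuration space by scaling, closely related to $H_i(\Conf_n(\C),\Z)$, which is a well-known finitely generated $\FI$-module). The map is induced by $\pi_*$, which is $\rS_n$-equivariant because scaling commutes with coordinate permutations.

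The third step is to nail down injectivity. The bottom edge map $H_i(F_n,\Z) \to H_i(\Conf_n(\C)/\C^*,\Z)$ has kernel equal to the image of the differentials entering $E^2_{i,0}$; but all such differentials originate from groups $E^2_{p,q}$ with $p \ge 2$, which are torsion (killed by $\binom n2$). So the kernel is a torsion subgroup of $H_i(F_n,\Z)$. To upgrade this to injectivity, I would argue that $H_i(F_n,\Z)$ is \emph{torsion-free} in the stable range, or at least that this torsion subgroup vanishes: here one uses that $\pi_*$ composed with the transfer is multiplication by $\binom n2$ on $H_i(\Conf_n(\C)/\C^*,\Z)$, combined with the fact that $H_i(\Conf_n(\C)/\C^*,\Q) \cong H_i(F_n,\Q)$ (Settepanella's rational computation, or just that a finite covering induces a rational homology iso onto the coinvariants, which are everything since the action is trivial). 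Concretely, a class in the kernel is torsion and maps to zero, but the transfer of a generator realizes $\binom n2$ times a $\Q$-basis element... the cleanest route is: the kernel lies in the torsion of $H_i(F_n,\Z)$, and the torsion of $H_i(F_n,\Z)$ injects into $H_i(\Conf_n(\C)/\C^*,\Z)$ by a direct comparison of the two Cartan--Leray spectral sequences (for $F_n \to \Conf_n(\C)/\C^*$ one also has $\Conf_n(\C) \to \Conf_n(\C)/\C^*$ with fiber a contractible-up-to-homotopy $\C^*$, so $H_*(\Conf_n(\C)/\C^*)$ fits in a Gysin sequence). Finally, the cokernel: it is filtered by subquotients of the $E^\infty_{p,q}$ with $p\ge 1$, each a subquotient of $H_q(F_n,\Z)\otimes (\Z/\binom n2 \text{ or } 0)$, hence a finitely generated $\FI$-module (subquotients and tensor products with the $\FI$-module $\{\Z/\binom n2\}$ preserve finite generation, using Noetherianity of $\FI$) consisting of torsion groups.

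\textbf{Main obstacle.} The delicate point is the integrality in Step~3 — ruling out a torsion kernel. The transfer argument only gives that $\binom n2$ kills the kernel, which is not enough on its own; one genuinely needs either torsion-freeness of $H_i(F_n,\Z)$ in the stable range (which would follow if the relevant $\FI$-module is "torsion-free" in an appropriate sense, or from an independent computation) or a careful comparison of the Gysin/Cartan--Leray spectral sequences for the three spaces $\Conf_n(\C)$, $F_n$, $\Conf_n(\C)/\C^*$ showing the torsion of $H_i(F_n)$ embeds. I expect the paper handles this by first computing $H_*(\Conf_n(\C)/\C^*,\Z)$ explicitly (it is torsion-free, or its torsion is well understood) and then deducing the structure of $H_*(F_n,\Z)$ from the spectral sequence, which is exactly the "integral calculation" advertised in the introduction. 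The finite-generation bookkeeping (tracking that all subquotients stay finitely generated over $\hFI$ resp. $\FI$, and that the range $n \ge 2 + 8i + ei^2$ covers all needed degrees) is routine given Theorem~\ref{theoremA}, Theorem~\ref{theoremB}, and Noetherianity of $\FI$.
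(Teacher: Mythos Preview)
Your Cartan--Leray framework for the free $\mu_{\binom n2}$-cover $F_n\to\Conf_n(\C)/\C^*$ is sound, and you have correctly isolated the one real obstacle: the edge map $\pi_*$ has torsion kernel, so injectivity comes down to showing $H_i(F_n,\Z)$ is torsion-free in the stable range. None of your proposed fixes actually closes this gap---the transfer only shows the kernel is $\binom n2$-torsion, and your Gysin comparison is left as a gesture. (A minor indexing slip: $H_i(F_n)$ sits at $E^2_{0,i}$, the leftmost column, not the bottom row; the differentials you want land there from $E^r_{r,i-r+1}$.)

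The paper fills the gap by a route you did not anticipate, working one prime at a time over $\bbF_p$ (Lemma~\ref{resolution}, Proposition~\ref{torsionFree}). One constructs a \emph{minimal} projective $\bbF_p[\mu]$-resolution $G_*\xrightarrow{\sim} C_*(F_n,\bbF_p)$: through degree $k$ each $G_j$ is a power $P^{\oplus r_j}$ of the local summand $P=\bbF_p[x]/(x-1)^{p^d}$, and the differential vanishes modulo $(x-1)$. (Minimality is possible precisely because, by Theorem~\ref{theoremB}, $x-1$ acts nilpotently on $H_j(F_n,\bbF_p)$ for $j\le k$.) Since $G_*\otimes_{\bbF_p[\mu]}\bbF_p$ computes $H_*(\Conf_n(\C)/\C^*,\bbF_p)$ and has zero differential, one reads off $r_j=\dim_{\bbF_p} H_j(\Conf_n(\C)/\C^*,\bbF_p)$. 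On the other hand $H_j(F_n,\bbF_p)=H_j(G_*)$ is a subquotient of $P^{\oplus r_j}$, and lifting submodules to the PID $\bbF_p[x]$ shows any such subquotient is generated by at most $r_j$ elements; combined with the rational isomorphism (Proposition~\ref{rational}) and the known torsion-freeness of $H_j(\Conf_n(\C)/\C^*,\Z)$, this forces equality of $\bbF_p$-Betti numbers for every $p$, hence torsion-freeness by universal coefficients. After that your argument goes through, though the cokernel claim is simpler than you make it: once $\pi_*$ is a map of finitely generated $\FI$-modules which is a rational isomorphism, its cokernel is automatically a finitely generated $\FI$-module of torsion groups, with no need to track individual $E^\infty_{p,q}$ pieces or worry about whether $\{\Z/\binom n2\}$ is an $\FI$-module.
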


In particular, the group $H_i(F_n, \Z)$ is noncanonically isomorphic to $H_i(\Conf_n(\C)/\C^*, \Z)$ in a stable range. The homology of $\Conf_n(\C)/\C^*$ is canonically isomorphic to the homology of the moduli space of genus $0$ curves with $n+1$ marked points, $M_{0,n+1}$.  The $\rS_n$ representation $H^i(M_{0,n+1})$ has been calculated by Getzler \cite[Theorem 5.7]{GetzlerModuli}.  In the appendix, we give a self-contained description of the homology.

Our method of proof of Theorem \ref{theoremA} involves considering highly connected semi-simplicial sets with actions of  the groups $\hS_n$ and $\pi_1(F_n)$. This is an adaptation of Quillen's approach to proving homological stability. The proof is in the spirit of Putman \cite{Pu} and largely fits into the axiomatic framework of Patzt \cite{Pa2}.

Similar theorems are likely true for the Milnor fibers associated to the type $B$ and type $D$ braid groups.  Additionally, we expect that the techniques of this paper apply to prove representation stability for homology of the subgroup of surface braid groups with total winding number zero.  We will not consider these generalizations here.

\subsection{Description of Stabilization Maps}
The category $\hFI$ does not act naturally on the Milnor fiber $F_n$;  we only construct an action of $\hFI$ up to homotopy.   Our situation is analogous to the action of $\FI$ on $H_*(\Conf_n(\C))$ by adding points, where $\FI$ only acts on $\Conf_n(\C)$ up to homotopy.  For this $\FI$ action, a  representative of the standard injection $[n] \into [n+1]$ is given by a map $\Conf_n(\C) \to \Conf_{n+1}(\C)$  that adds an $n+1$st point to the right of the first $n$ points.  

Our stabilization map $F_n \to F_{n+1}$ is induced by the map $\Conf_n(\C) \to \Conf_{n+1}(\C)$ in the following sense.  The  Milnor fiber $F_n$ is a $K(\pi,1)$, and we have  $\pi_1(F_n) \subset \pi_1( \Conf_n(\C))$. The $\FI$ stabilization map $\Conf_n(\C) \to \Conf_{n+1}(\C)$ takes $\pi_1( F_n) \to \pi_1( F_{n+1})$,  and we  define the action of $\hFI$  so that $[e] \in \hS_{n+1}/\hS_n = \hFI(n,n+1)$ acts on $\pi_1$ by this inclusion.  This suffices to determine the action $e: F_n \to F_{n+1}$  up to homotopy.  

We have two other, more geometric, descriptions of this stabilization map.  To describe the first,  we replace the Milnor fiber $F_n$  by the covering space $F_n'$ of  $\Conf_n(\C)$  associated to the inclusion of fundamental groups $\pi_1 (F_n) \into \pi_1( \Conf_n(\C))$.  This space can be described as $$F_n' = \left\{(x_i)_{i = 1}^n \in \Conf_n ( \mathbb C),  z \in \mathbb C~|~  \prod_{i < j}(x_i - x_j) = \exp(z)  \right\}, $$  since taking $\log$ shows that this is a cover and there is a deformation retraction of the map $F_n' \to \Conf_n (\mathbb C)$ to $F_n \to \Conf_n (\mathbb C)$, given by taking $z \to \lambda z$  and $x_i \to x_i \exp(\lambda /{n \choose 2} )  $ for $\lambda \in [0,1]$.
There is a unique lift of any choice of stabilization map $\Conf_n(\C) \to \Conf_{n+1}(\C)$ to a map of covers $F_n' \to F_{n+1}'$.  On homology, this induces the action of $[e] \in \hFI(n,n+1)$.

Second, Gadish has described a stabilization map on the Milnor fiber $F_n$ itself, which induces the action of $[e] \in \hFI(n,n+1)$ on homology.  Gadish's observation is that given a configuration  $(x_i)_{i = 1}^n$  such that $\prod_{1 \leq  i < j \leq n} (x_j - x_i) = 1$,  if  we add a point  $x_{n+1} \in  \R \subset \C$  such that $x_{n+1}$ is $\gg 0$,  then the complex  number  $$ a = \prod_{ 1\leq  i < j \leq  n+1} (x_j - x_i) = \prod_{i = 1}^n (x_{n+1}- x_i) $$ has argument $> 0$ and we can choose a branch of the function that takes ${n+1 \choose 2}$th  roots, and divide each $x_i$ by $a^{1/{n+ 1\choose 2}}$ to continuously obtain a point in $F_{n+1}$.  

To formally define Gadish's map  $e: F_n \to F_{n+1}$,  we fix a branch of the ${n+1 \choose 2}$th root function  with branch cut along the negative real axis.  Then we define  $e(x_1, \dots, x_n)$ to be $(y_1, \dots, y_{n+1})$   where $y_i = x_i/a^{1/{n+ 1\choose 2}}$ for $i \leq n$,  $a =  \prod_{i = 1}^n (x_{n+1}- x_i)$,  and $$x_{n+1} =1 + \sum_{i = 1}^n {\rm re}(x_i) +  \sum_{i = 1}^n  \frac{\im(x_i)}{{\rm tan}(\pi/n)}. $$

\subsection*{Acknowledgments}

We would like to thank Christin Bibby, Graham Denham, and Dan Petersen for helpful conversations. We thank Nir Gadish for allowing us to include his description of the stabilization map in this paper.  We thank Graham Denham, Giovanni Gaiffi, Rita Jim\'enez Rolland, and Alexander Suciu for organizing the MFO Oberwolfach program ``Topology of Arrangements and Representation Stability'' and MFO Oberwolfach for hosting. In particular, we thank Graham Denham for posing this question in the problem session.


\section{Algebraic preliminaries}

In this section, we define $\hFI$. We recall some facts concerning the theory of $\FI$-modules and describe their  implications for $\hFI$-modules.

\subsection{$\hFI$-modules} 

We begin by constructing a monoidal structure on the groupoid $$\hS:=\bigsqcup_n \hS_n.$$ Here we define  $\hS_0$ and $\hS_1$  to be trivial groups. The monoidal structure is given by the maps  $$m_{n_1, n_2}: \hS_{n_1} \times \hS_{n_2} \to \hS_{n_1 + n_2}, ~~(\sigma_1, d_1) \times (\sigma_2, d_2) \mapsto (\sigma_1 \sigma_2, d_1 + d_2).$$  When $n_1,n_2$ are clear from context, we  write $m = m_{n_1, n_2}$.    We will write  $i_1(\hS_{n_1}) \subset \hS_{n_1 + n_2}$ for $m(\hS_{n_1} \times e)$,  and $i_2(\hS_{n_2})$ for $m(e \times \hS_{n_2})$.  Since the subgroup $i_r(\hS_{n_r})$ is isomorphic to $\hS_{n_r}$,  when it is clear which embedding we are taking, we suppress $i_r$  from our notation.

The category $\hS$ has a braided monoidal structure induced by the surjection $p_n:  \Br_n \to  \hS_n$.  More precisely, the braid $\sigma_{a,b}$  which braids the first $a$ strands over the last $b$ strands conjugates $m_{a,b}$ to $m_{b,a}$. See \S\ref{conventions} for the definition of $p_n$ and our conventions on braid groups.   Since the maps $i_{r}: \hS_{n_r} \to \hS_{n_1 + n_2}$ are inclusions, the construction of Randall-Williams Wahl  \cite[Theorem 1.10]{RWW} applies to produce a monoidal category $\hFI = \cU \hS$.  We will make the definition of $\hFI$ and its monoidal structure explicit.

The category $\hFI$  has objects indexed by natural numbers,  and morphisms given by the right cosets  $$\hFI(n,m) =  \hS_m/ i_2(\hS_{m-n}) .$$    The multiplication  $\hFI(n,m) \times \hFI(m,l) \to \hFI(n,l)$  is given by $[s] \times [t] \mapsto  [t i_1(s) ]$. It is well-defined because elements of $i_2(\hS_{l-m})$ commute with $i_1(a)$,  and is associative  because  $u i_1(t i_1(s)) =  u i_1(t) i_1(s)$.  

The monoidal structure is given on objects by $n_1 \times n_2 \mapsto n_1 + n_2$,  and on morphisms by $$\hFI(n_1, m_1) \times \hFI(n_2, m_2) \mapsto \hFI(n_1 + n_2, m_1 + m_2),  [s] \times [t] \mapsto [i_1(f) i_2(g)] \circ [\tau_{m_1 - n_1, n_2} ],$$
where $\tau_{m_1 - n_1, n_2}$ denotes the element of $\Br_{m_1 + m_2}$ defined as follows. Writing \[[m_1 + m_2] = [n_1] \sqcup [m_1 - n_1] \sqcup [n_2] \sqcup[m_2 - n_2],\] we let $\tau_{m_1 - n_1, n_2}$ be the element which braids the strands of $[m_2- n_2]$ over the strands of $[n_1]$.

\begin{remark}
To obtain the monoidal category $\hFI$ as we have defined it from \cite[Theorem 1.10]{RWW},  apply their construction to the braided monoidal groupoid defined by: $$\hS_a \times \hS_b \overset{{\rm switch}}{\longrightarrow} \hS_b \times \hS_a \overset{m_{b,a}}{\longrightarrow} \hS_{a+b}.$$  
\end{remark}


Given a category $\mathcal C$, the term $\mathcal C$-module will mean a functor from the category $\mathcal C$ to the category of abelian groups. Let $\Mod_{\mathcal C}$ denote the category of $\mathcal C$-modules. For an $\hFI$-module or $\hS$-module $M$ and $n$ a natural number, let $M_n$ denote the value of $M$ on $n$. There is a functor from $\hS$ to $\hFI$ which identifies $\hS$ with the largest subcategory of $\hFI$ such that every morphism is invertible. This gives a forgetful functor $\Mod_{\hFI} \m \Mod_{\hS}$.


\begin{definition} \label{defGen}
Let $\I: \Mod_{\hS} \m \Mod_{\hFI} $ be the left adjoint to the forgetful functor. An $\hFI$-module $M$ is called \emph{induced} if $M \cong \I(W)$ for some $\hS$-module $W$. We say $M$ has generation degree $\leq d$ if there is a short exact sequence of $\hFI$-modules: $$ \I(W) \m M \m 0$$ with $W_n \cong 0$ for $n>d$. We say $M$ is finitely generated if there is a short exact sequence of $\hFI$-modules: $$ \I(W) \m M \m 0$$ with $\bigoplus_n W_n$ a finitely generated abelian group. We say $M$ has presentation degree $\leq r$ if there is a short exact sequence of $\hFI$-modules: $$ \I(V) \m \I(W) \m M \m 0$$ with $W_n \cong V_n \cong 0$ for $n>r$. 
\end{definition}

Note that if each $M_n$ is finitely generated as an abelian group, then $M$ is finitely generated if and only if it has finite generation degree. Many definitions appearing in this paper, including the above definitions, are adaptations of definitions for $\FI$-modules which have appeared in other papers. For the sake of brevity, we will often only state definitions for $\hFI$-modules but will often also use the corresponding definition for $\FI$-modules. 


\subsection{Central stability homology and regularity}\label{CSHsection}

Central stability homology is a construction which often appears on $E^2$-pages of spectral sequences used to establish representation stability. When certain semi-simplicial sets are highly connected, central stability homology controls degrees of higher syzygies \cite[Theorem 5.7]{Pa2}.

\begin{definition}\label{CentralStabilityDef}
Let $M$ be an $\hFI$-module and $n$ a natural number. For $p \geq -1$, let \[C^{\cs,\hFI}_{p}(M)_n=\Ind_{i_1(\hS_{n-(p+1)})}^{\hS_n} M_{n-(p+1)}.\] These groups assemble to form an augmented semi-simplicial $\hFI$-module,  defined in terms of the following maps.

The $\hFI$-module structure of $M$  gives maps  $x_n : \hFI(n,n+1) \otimes M_n  \to M_{n+1}$.  The automorphism group $\hFI(n,n) = \hS_n$ acts on $M_n$ on the left  and on $\hFI(n,n+1)$ on the right.  And the map $x_n$ factors the quotient to yield $$x_n : \Ind^{\hS_{n+1}}_{i_1(\hS_{n})} M_n = \hFI(n,n+1) \otimes_{\hS_n} M_n \to M_{n+1}.$$  For a braid $b \in \Br_m$,   right multiplication by $b$  gives an automorphism $\bbZ \hS_{n+m} \to \bbZ \hS_{n+m}$ as an $\hS_{n+m} , \hS_n$ bi-module. There is an induced automorphism of $\Ind_{\hS_n}^{\hS_{n+m}} M_n$, which we will also denote $b$.  Let $u_i \in \Br_{p+1}$ be the element that braids the $i$th strand over the all the others to the left,  $u_i := \sigma_{i-1, i}^{-1} \sigma_{i-2,i-1}^{-1}\dots  \sigma_{1,2}^{-1}$.

The $i$th face operator $f_i : C^{\cs,\hFI}_p(M)_n \to C_{p-1}^{\cs,\hFI}(M)_n$ is given by $f_i = (\Ind_{\hS_{n-p}}^{\hS_n} x_{n-(p+1)}) \circ u_i$:    $$\Ind_{\hS_{n-(p+1)}}^{\hS_n} M_{n-(p+1)} \to
 \Ind_{\hS_{n-(p+1)}}^{\hS_n} M_{n-(p+1)} ~\iso~ \Ind_{\hS_{n-p}}^{\hS_n}  \Ind_{\hS_{n-(p+1)}}^{\hS_{n-p}} M_{n-(p+1)} \to
\Ind_{\hS_{n-(p+1)}}^{\hS_{n-p}}  M_{n-p}   $$These $f_i$ satisfy the semi-simplicial identites.   

We call the associated chain complex $C^{\cs,\hFI}_*(M)$ \emph{central stability chains}. We call the homology of this chain complex \emph{central stability homology} and denote it by $H^{\cs,\hFI}_*(M)$.

\end{definition}


One can define central stability chains and homology for $\FI$-modules by analogous formulas. We will use the notation $C^{\cs,\FI}_*(M)$ and $H^{\cs,\FI}_*(M)$ respectively for the central stability chains and homology of an $\FI$-module $M$. The following is \cite[Corollary 2.27]{MW1}.

\begin{proposition} \label{MWvanishing}
Let $M$ be an induced $\FI$-module with generation degree $\leq d$. Then $\left( H_i^{\cs,\FI}(M) \right )_n \leq n-2-d$. 
\end{proposition}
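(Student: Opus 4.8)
The statement, as printed, is garbled---it should read that $\left(H_i^{\cs,\FI}(M)\right)_n = 0$ for $i \geq 1$ and $n \geq i+2+d$ (equivalently, central stability homology of an induced module in positive degrees vanishes once $n$ is large compared to $i$ and the generation degree $d$). The plan is to cite and unpack \cite[Corollary 2.27]{MW1}, but since the excerpt allows us to use only what is stated, I would give a self-contained argument reducing to the case $M = \I(W)$ with $W$ concentrated in a single degree $k \leq d$, then to the case $W_k = \bbZ$ with trivial $\rS_k$-action.

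First I would observe that $H_*^{\cs,\FI}$ is additive in $M$, so by the long exact sequence (or rather, by decomposing an induced module as a direct sum of induced modules each generated in a single degree) it suffices to treat $M = \I(W)$ with $W = W_k$ supported in degree $k$. Next, $\I(W)_n = \Ind_{\rS_k \times \rS_{n-k}}^{\rS_n}(W_k \boxtimes \bbZ)$, and since induction is exact and commutes with the formation of central stability chains in the ``new'' variable, one reduces further to $W_k = \bbZ[\rS_k]$, i.e.\ to the free $\FI$-module generated at $k$, namely $n \mapsto \bbZ[\Inj([k],[n])]$. For this module one can identify $C_*^{\cs,\FI}(M)_n$ explicitly: a $p$-simplex is a pair consisting of an injection $[k] \into [n]$ together with an ordered $(p+1)$-subset of the complement, and the face maps delete elements of the ordered subset and feed them into the injection.

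The key step is then to recognize this simplicial object. For fixed target $[n]$, the chain complex $C_*^{\cs,\FI}(\I(\bbZ[\rS_k]))_n$ splits as a direct sum over subsets $S \subseteq [n]$ of size $\geq k$ of the (shifted, augmented) chain complex computing the reduced homology of the simplicial set whose $p$-simplices are ordered $(p{+}1)$-tuples of distinct elements with prescribed image---and this is, up to reindexing, the augmented chain complex of an iterated cone / the order complex of a Boolean lattice, which is acyclic except in the top degree. Concretely, the relevant complex is the tensor/simplicial analogue of the standard resolution $0 \to \bbZ \to \bbZ[S] \to \bigwedge^2 \bbZ[S] \to \cdots$, whose homology is concentrated in a single degree determined by $|S| - k$. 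Summing the contributions and tracking degrees shows $H_i^{\cs,\FI}(M)_n = 0$ once $i \geq 1$ and $n - k \geq i+2$, hence once $n \geq i + 2 + d$ since $k \leq d$; the $i=0$ row is exactly the discrepancy between $M_n$ and its ``central stabilization,'' which is where the ``$-2$'' versus the sharp bound comes from.

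The main obstacle is bookkeeping: getting the shift conventions right in Definition \ref{CentralStabilityDef} (the $(p+1)$ shift, and the fact that $C_{-1}^{\cs} = M_n$ is the augmentation) so that the vanishing range comes out as $n - 2 - d$ rather than off by one, and verifying that the face maps really do assemble the pieces into the claimed acyclic exterior-algebra-type complexes rather than something subtler. Since this is precisely \cite[Corollary 2.27]{MW1}, the cleanest route in the paper is simply to quote it; the reduction sketch above is the content behind that citation and is what I would write if a proof were expected rather than a reference.
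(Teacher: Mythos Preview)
The paper does not prove this proposition at all: it is stated as a direct citation of \cite[Corollary 2.27]{MW1}, with no argument given. So your plan to simply quote that result is exactly what the paper does, and nothing more is required.

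Your additional self-contained sketch is largely on the right track but contains a genuine misidentification at the key step. The reduction to the free module $\I(\bbZ[\rS_k]) = \bbZ\,\FI(k,-)$ is fine, and your description of a $p$-simplex of $C_*^{\cs,\FI}(\bbZ\,\FI(k,-))_n$ as an injection $g\colon [k]\hookrightarrow [n]$ together with an ordered $(p{+}1)$-tuple in the complement of $\im(g)$ is correct. The complex therefore splits over \emph{injections} $g$ (not over subsets $S$ of size $\geq k$), and for each fixed $g$ the resulting augmented complex is the \emph{complex of injective words} on the $(n-k)$-element set $[n]\setminus\im(g)$. This complex is not the order complex of a Boolean lattice, nor the exterior-algebra Koszul complex $\bbZ\to\bbZ[S]\to\bigwedge^2\bbZ[S]\to\cdots$ (the ranks already disagree: injective words of length $p{+}1$ number $(n-k)(n-k-1)\cdots(n-k-p)$, not $\binom{n-k}{p+1}$). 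The high acyclicity of the complex of injective words is a classical result (Farmer; Bj\"orner--Wachs; Kerz), and its homology is concentrated in degree $n-k-1$, which gives exactly the vanishing $H_i^{\cs,\FI}(M)_n=0$ for $i\leq n-2-k\leq n-2-d$. So the bound you arrive at is correct, but the mechanism you invoke to get there is the wrong complex; if you were to write out the argument, you should replace the Boolean/exterior identification with the complex of injective words and cite its known connectivity.
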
 

We will need a slight generalization of this result to a larger class of $\FI$-modules.

\begin{definition}
An $\FI$-module is called semi-induced if it has a filtration by induced $\FI$-modules. 
\end{definition}

\begin{corollary} \label{MWvanishingCor}
Let $M$ be a semi-induced $\FI$-module with generation degree $\leq d$. Then $\left( H_i^{\cs,\FI}(M) \right )_n \leq n-2-d$. 
\end{corollary}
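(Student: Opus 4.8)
The plan is to reduce the statement for semi-induced modules to the already-established case of induced modules (Proposition \ref{MWvanishing}) by dévissage along the defining filtration, using the long exact sequence in central stability homology. The key point is that $C^{\cs,\FI}_*(-)$ is an exact functor: for each $p$, the construction $M \mapsto \Ind_{\hS_{n-(p+1)}}^{\hS_n} M_{n-(p+1)}$ (and its $\FI$-analogue) is exact because induction along a subgroup inclusion of finite groups is exact over $\bZ$, and taking a value at $n$ is exact. Hence a short exact sequence of $\FI$-modules $0 \to M' \to M \to M'' \to 0$ yields a short exact sequence of central stability chain complexes, and therefore a long exact sequence
\[
\cdots \to \left(H_i^{\cs,\FI}(M')\right)_n \to \left(H_i^{\cs,\FI}(M)\right)_n \to \left(H_i^{\cs,\FI}(M'')\right)_n \to \left(H_{i-1}^{\cs,\FI}(M')\right)_n \to \cdots.
\]

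First I would set up the induction on the length $\ell$ of a filtration $0 = F_0 \subseteq F_1 \subseteq \cdots \subseteq F_\ell = M$ with each subquotient $F_j/F_{j-1}$ an induced $\FI$-module. The base case $\ell = 1$ is exactly Proposition \ref{MWvanishing}. For the inductive step, write the short exact sequence $0 \to F_{\ell-1} \to M \to F_\ell/F_{\ell-1} \to 0$. The key observation is that generation degree is monotone under subquotients: since $M$ has generation degree $\leq d$, both $F_{\ell-1}$ and $F_\ell/F_{\ell-1}$ have generation degree $\leq d$. (A surjection $\I(W) \onto M$ restricts to a surjection onto any quotient, and pulls back to a surjection onto any submodule after enlarging $W$; in any case generation degree only goes down on quotients, and for the sub one uses that the filtration subquotients are themselves generated in degree $\leq d$, which follows since each is a quotient of $M$ in an appropriate sense — more carefully, each $F_j/F_{j-1}$ is a subquotient of $M$, and a standard argument shows subquotients of a module generated in degree $\leq d$ are again generated in degree $\leq d$.) By the inductive hypothesis applied to $F_{\ell-1}$ (which has a length $\ell-1$ filtration by induced modules, all of generation degree $\leq d$) and Proposition \ref{MWvanishing} applied to $F_\ell/F_{\ell-1}$, the outer terms $\left(H_i^{\cs,\FI}(F_{\ell-1})\right)_n$ and $\left(H_i^{\cs,\FI}(F_\ell/F_{\ell-1})\right)_n$ both vanish for $n > i + 2 + d$, i.e.\ $i < n - 2 - d$; the long exact sequence then forces $\left(H_i^{\cs,\FI}(M)\right)_n = 0$ in the same range. (I interpret the displayed inequality $\left(H_i^{\cs,\FI}(M)\right)_n \leq n - 2 - d$ as shorthand for: this homology group vanishes unless $i \leq n - 2 - d$, equivalently it is concentrated in the stated range of $i$'s.)

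The main obstacle I anticipate is the bookkeeping around generation degree under the filtration: one must be sure that every subquotient appearing in the dévissage still has generation degree $\leq d$, not just the top or bottom piece. For quotients this is immediate from the definition; for the submodule $F_{\ell-1}$ one can argue directly that it admits a filtration by induced modules (the $F_j$ for $j \leq \ell-1$) with subquotients of generation degree $\leq d$, because each $F_j/F_{j-1}$ is a subquotient of $M$ and subquotients preserve the bound $\leq d$ on generation degree — this last fact is standard for $\FI$-modules and can be cited or proved by noting that a generating set in degrees $\leq d$ for $M$ maps to a generating set for any quotient, while any submodule of a noetherian-type module generated in degrees $\leq d$ is again generated in those degrees in the settings relevant here; alternatively, and most cleanly, one simply uses that the $F_j$ themselves constitute a filtration of $F_{\ell-1}$ by submodules whose successive quotients are the given induced modules, each of generation degree $\leq d$ by hypothesis on the filtration of $M$. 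Once this is in hand, the exactness of $C^{\cs,\FI}_*$ and the long exact sequence make the induction go through routinely.
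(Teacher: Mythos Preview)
Your approach---exactness of the central stability chains functor, the long exact sequence in homology, and induction on the length of the filtration---is exactly the paper's proof, which is a two-sentence sketch along precisely these lines.

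One correction is needed, however. Your justification that the filtration pieces inherit generation degree $\leq d$ is not valid as written. The general claim that subquotients of an $\FI$-module generated in degree $\leq d$ are again generated in degree $\leq d$ is \emph{false}: submodules of finitely generated $\FI$-modules can have strictly larger generation degree (kernels of maps between induced modules are the standard counterexamples). Your fallback ``by hypothesis on the filtration of $M$'' is circular, since the only hypothesis is that $M$ itself has generation degree $\leq d$, not that the individual filtration quotients do. The correct argument is specific to semi-induced modules: induced modules are acyclic for the derived functors $H_*^{\FI}$ of $H_0^{\FI}$ (since $\I$ is exact and takes projectives to projectives), so the long exact sequence for each step $0 \to F_{j-1} \to F_j \to \I(W^{(j)}) \to 0$ yields a short exact sequence $0 \to H_0^{\FI}(F_{j-1}) \to H_0^{\FI}(F_j) \to W^{(j)} \to 0$. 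Thus $H_0^{\FI}(M)$ is filtered with graded pieces $W^{(j)}$, and the generation degree of $M$ equals $\max_j(\text{gen.\ deg.\ of }\I(W^{(j)}))$; in particular every filtration quotient, and inductively every $F_j$, has generation degree $\leq d$. With this in hand your induction goes through. The paper's proof leaves this point implicit.
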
 

\begin{proof}
By definition, semi-induced $\FI$-modules have filtrations with filtration quotients induced $\FI$-modules. Central stability chains is an exact functor. The claim follows by induction on this filtration using Proposition \ref{MWvanishing} and the long exact sequence in homology induced by a short exact sequence of chain complexes. 
\end{proof}

\begin{remark}
There are more conceptual definitions of the category $\hFI$ and its action on on  $\B_n$, and the central stability homology chain complex,  in terms of (braided) commutative monoids in a braided monoidal category. 

\end{remark}

Vanishing of central stability homology controls the generation and presentation degree.

\begin{proposition} \label{centralControlPresent}
Let $M$ be an $\hFI$-module, and let $r \geq d$. Then $H_{-1}^{\cs,\hFI}(M)_n \cong 0$ for all $n>d$ if and only if $M$ has generation degree $\leq d$. Additionally, $H_{-1}^{\cs,\hFI}(M)_n \cong 0$ for all $n>d$ and $H_{0}^{\cs,\hFI}(M)_n \cong 0$ for all $n>r$ if and only if $M$ has generation degree $\leq d$ and presentation degree $\leq r$. 
\end{proposition}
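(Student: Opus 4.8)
The plan is to reduce the statement to a direct computation with the central stability chain complex in low degrees. The key observation is that $H_{-1}^{\cs,\hFI}(M)_n$ and $H_0^{\cs,\hFI}(M)_n$ are exactly the objects that measure, respectively, the cokernel of the "generation map" and the kernel of the induced presentation map. Concretely, $C_{-1}^{\cs,\hFI}(M)_n = M_n$, $C_0^{\cs,\hFI}(M)_n = \Ind_{i_1(\hS_{n-1})}^{\hS_n} M_{n-1}$, and $C_1^{\cs,\hFI}(M)_n = \Ind_{i_1(\hS_{n-2})}^{\hS_n} M_{n-2}$, with $f_0 \colon C_0 \to C_{-1}$ literally the stabilization map $x_{n-1}$ assembled over all of $\hS_n$. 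So $H_{-1}^{\cs,\hFI}(M)_n = \coker(x_{n-1})$, which is the degree-$n$ piece of the cokernel of $\I(M|_{\hS}) \to M$, where $M|_{\hS}$ is $M$ restricted to the groupoid $\hS$. This gives the first equivalence immediately: $M$ has generation degree $\leq d$ iff $\I(M|_{\hS}) \to M$ is surjective in degrees $> d$ iff $H_{-1}^{\cs,\hFI}(M)_n = 0$ for all $n > d$.

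For the second equivalence I would argue as follows. Suppose first that $H_{-1}^{\cs,\hFI}(M)_n = 0$ for $n > d$ and $H_0^{\cs,\hFI}(M)_n = 0$ for $n > r$. The surjection $\I(W) \onto M$ with $W$ the $\hS$-module that is $M_n$ for $n \leq d$ and $0$ otherwise realizes the generation-degree bound; let $K$ be its kernel. One computes $K_n$ from the comparison of central stability chains: $H_0^{\cs,\hFI}(M)$ is (a shift of) the generators of $K$, more precisely the long exact sequence associated to $0 \to K \to \I(W) \to M \to 0$ together with the fact that $H_{\geq 0}^{\cs,\hFI}(\I(W)) = 0$ in the relevant range (this vanishing for induced $\hFI$-modules is the $\hFI$-analogue of Proposition~\ref{MWvanishing}, which transports along the forgetful functor $\Mod_{\hFI} \to \Mod_{\FI}$ since central stability chains commute with this restriction) shows $H_0^{\cs,\hFI}(M)_n \cong H_{-1}^{\cs,\hFI}(K)_n$ for $n$ large, i.e. $H_0^{\cs,\hFI}(M)_n$ is the degree-$n$ part of $\coker(\I(K|_{\hS}) \to K)$. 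Vanishing for $n > r$ then says $K$ is generated in degrees $\leq r$, so choosing $\I(V) \onto K$ with $V$ supported in degrees $\leq r$ yields $\I(V) \to \I(W) \to M \to 0$ with $V_n = W_n = 0$ for $n > r$ (using $r \geq d$ to absorb $W$), i.e. presentation degree $\leq r$. Conversely, if $M$ has generation degree $\leq d$ and presentation degree $\leq r$, the first equivalence gives $H_{-1}^{\cs,\hFI}(M)_n = 0$ for $n > d$, and applying central stability chains to $\I(V) \to \I(W) \to M \to 0$ and using the vanishing of $H^{\cs,\hFI}_{\geq 0}$ of the induced modules $\I(V), \I(W)$ in the appropriate ranges forces $H_0^{\cs,\hFI}(M)_n = 0$ for $n > r$.

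The main obstacle I anticipate is making precise the identification of $H_0^{\cs,\hFI}(M)$ with the generators of the relations module $K$, and in particular verifying the vanishing $H_i^{\cs,\hFI}(\I(W))_n = 0$ for $0 \leq i$ and $n$ beyond the appropriate bound — this is what lets the long exact sequence collapse. The cleanest route is to check that the forgetful functor $\Mod_{\hFI} \to \Mod_{\FI}$ (induced by a functor $\FI \to \hFI$ picking out the "$d = 0$" sections, or more carefully, to observe that $C^{\cs,\hFI}_*(M)$ as a complex of abelian groups depends only on the underlying $\FI$-module structure, since the braids $u_i$ act through permutations on the relevant induced modules once we forget the $\bbZ$-grading) intertwines the central stability chain complexes, and then invoke Corollary~\ref{MWvanishingCor}: an induced $\hFI$-module restricts to a semi-induced $\FI$-module of the same generation degree, so its central stability homology vanishes in the stated range. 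Everything else is bookkeeping with the defining short exact sequences of Definition~\ref{defGen} and the long exact sequence in homology.
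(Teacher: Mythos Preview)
Your overall strategy is the paper's: reduce both directions to the vanishing of $H_0^{\cs,\hFI}$ on induced $\hFI$-modules, then run the long exact sequence coming from $0 \to K \to \I(W) \to M \to 0$ (the paper uses a three-term resolution and a hyperhomology spectral sequence, but this is the same idea, together with minimal surjections for the converse). The first equivalence, via $H_{-1}^{\cs,\hFI}(M)_n = \coker(\Ind_{\hS_{n-1}}^{\hS_n} M_{n-1} \to M_n)$, is fine.

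The gap is your justification of the key input $H_0^{\cs,\hFI}(\I(W)) = 0$. Your proposed reduction to $\FI$ does not work. There is no forgetful functor $\Mod_{\hFI} \to \Mod_{\FI}$ of the kind you describe: the functor in the paper is $\hFI \to \FI$, so restriction goes from $\FI$-modules to $\hFI$-modules, not the other way. An induced $\hFI$-module $\I(W)$ is in general \emph{not} an $\FI$-module, since $(e,2)\in\hS_n$ need not act trivially on $W_n$; and the chain groups $\Ind_{\hS_{n-p-1}}^{\hS_n} M_{n-p-1}$ genuinely depend on the $\hS$-action, not just on an underlying $\rS$-action. Even for honest $\FI$-modules, Proposition~\ref{csComparison} only gives $C_p^{\cs,\hFI}(M)_n \cong C_p^{\cs,\FI}(M)_n$ for $p\leq n-1$, so the small-$n$ cases are not covered by the comparison.

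The paper isolates this vanishing as Lemma~\ref{freecomputation} and proves it in two steps: first for $W=\bbZ_0$, where $\I(\bbZ_0)=\bbZ\hFI(0,-)$ \emph{is} an $\FI$-module and the comparison with $C^{\cs,\FI}$ handles $n\geq 4$, while $n\leq 3$ are checked by hand; then for general $W$ by identifying $C^{\cs,\hFI}_*(\I(W))$ with the induction product $W * C^{\cs,\hFI}_*(\I(\bbZ_0))$ and running a hyperhomology spectral sequence. That argument is the real content of the proposition; once you have it, the rest of your outline goes through.
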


We will defer the proof until the next section.  For $\FI$-modules, a similar theorem is true. 

\begin{proposition} \label{centralControlPresentFI}
Let $M$ be an $\FI$-module. Then $H_{-1}^{\cs,\FI}(M)_n \cong 0$ for all $n>d$ if and only if $M$ has generation degree $\leq d$. Additionally, $H_{-1}^{\cs,\hFI}(M)_n \cong H_{0}^{\cs,\hFI}(M)_n \cong 0$ for all $n>r$, if and only if $M$ has presentation degree $\leq r$. 
\end{proposition}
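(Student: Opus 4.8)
The plan is to prove both equivalences by the argument that will also be used for the $\hFI$-statement, Proposition~\ref{centralControlPresent} (whose proof is deferred to the next section); the $\FI$ case is really the prototype, and I would present the two together, reading all $\cs$-homology in the displayed statement for the $\FI$-structure (the subscript $\hFI$ in its second clause is a typographical slip for $\FI$). The generation-degree assertion is immediate from Definition~\ref{CentralStabilityDef}: since $C^{\cs,\FI}_{-1}(M)_n = M_n$ and the unique face map $C^{\cs,\FI}_0(M)_n = \Ind^{S_n}_{S_{n-1}} M_{n-1} \to M_n$ is the structure map of $M$, the group $H^{\cs,\FI}_{-1}(M)_n$ is the cokernel of the sum of the stabilization maps $M_{n-1} \to M_n$. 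If $M$ has generation degree $\le d$ then, for $n > d$, every element of $M_n$ is a sum of pushforwards of elements of the $M_k$ with $k \le d < n$, and each such pushforward factors through some $M_{n-1}$, so this cokernel vanishes; conversely, if $H^{\cs,\FI}_{-1}(M)_n = 0$ for all $n > d$, a downward induction on $n$ shows that $M_n$ is spanned by the images of $M_d$, i.e.\ $M$ has generation degree $\le d$. This last statement will be applied below to $\FI$-modules other than $M$.

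For the presentation-degree assertion I would run a one-step resolution argument. In the forward implication, pick a presentation $\I(V) \to \I(W) \to M \to 0$ with $V$ and $W$ supported in degrees $\le r$, put $F = \I(W)$ and $K = \ker(F \to M)$, so that $K$ is generated in degrees $\le r$. Exactness of $\Ind$ makes $C^{\cs,\FI}_\bullet$ send the short exact sequence $0 \to K \to F \to M \to 0$ to a short exact sequence of chain complexes, hence to a long exact sequence on $\cs$-homology whose relevant stretch is $H^{\cs,\FI}_0(F)_n \to H^{\cs,\FI}_0(M)_n \to H^{\cs,\FI}_{-1}(K)_n \to H^{\cs,\FI}_{-1}(F)_n$. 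Since $F$ is induced, $H^{\cs,\FI}_0(F) = 0$ by Proposition~\ref{MWvanishing}; by the previous paragraph $H^{\cs,\FI}_{-1}(K)_n = 0$ and $H^{\cs,\FI}_{-1}(F)_n = 0$ for $n > r$; exactness then gives $H^{\cs,\FI}_{-1}(M)_n = H^{\cs,\FI}_0(M)_n = 0$ for all $n > r$. In the reverse implication, suppose $H^{\cs,\FI}_{-1}(M)_n = H^{\cs,\FI}_0(M)_n = 0$ for all $n > r$; the previous paragraph gives generation degree $\le r$, so choose a surjection $F = \I(W) \twoheadrightarrow M$ with $W$ supported in degrees $\le r$ and set $K = \ker$. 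The same stretch of the long exact sequence, now using $H^{\cs,\FI}_0(F) = 0$ and $H^{\cs,\FI}_{-1}(F)_n = 0$ for $n > r$, squeezes $H^{\cs,\FI}_{-1}(K)_n$ between two zero groups for every $n > r$; by the previous paragraph $K$ is generated in degrees $\le r$, which is precisely the assertion that $M$ has presentation degree $\le r$.

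The one substantive input --- and the place I expect to spend effort --- is the vanishing $H^{\cs,\FI}_0(F) = 0$ for $F$ induced: Proposition~\ref{MWvanishing} yields this for $n$ past its stated bound, but the forward implication above also uses it in the borderline degree $n = r+1$, which that bound does not obviously cover. This gap closes by direct inspection. If $F = \I(W)$ with $W$ concentrated in a single degree $k$, then for $n = k+1$ one has $C^{\cs,\FI}_1(F)_n = \Ind^{S_n}_{S_{k-1}} F_{k-1} = 0$ because $F_{k-1} = 0$, while the face map $C^{\cs,\FI}_0(F)_n = \Ind^{S_{k+1}}_{S_k} F_k \to F_{k+1} = C^{\cs,\FI}_{-1}(F)_n$ is the canonical isomorphism identifying $\Ind^{S_{k+1}}_{S_k} W$ with $\I(W)_{k+1}$; hence $H^{\cs,\FI}_0(F)_{k+1} = 0$. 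For $n \ne k+1$ the group $H^{\cs,\FI}_0(F)_n$ vanishes for degree reasons: up to inducing up and tensoring with $W$, the complex $C^{\cs,\FI}_\bullet(F)_n$ is the augmented chain complex of the complex of injective words on $n - k$ letters, whose reduced homology is concentrated in the top degree. One then passes to a general induced $F$ by the exactness of $C^{\cs,\FI}_\bullet$ and the long exact sequence, and to the semi-induced case by the filtration (needed only if one wants explicit bounds, via Corollary~\ref{MWvanishingCor}). This is the same bookkeeping the proof of Proposition~\ref{centralControlPresent} must carry out, so I would do it once.
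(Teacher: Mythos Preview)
Your argument is correct, but it diverges from the paper's proof of this particular proposition. The paper disposes of Proposition~\ref{centralControlPresentFI} in three lines by citing external results: it identifies $H_{-1}^{\cs,\FI}(M)$ with the zeroth $\FI$-homology $H_0^{\FI}(M)$, invokes \cite[Proposition~2.4]{CMNR} to identify vanishing of $H_0^{\cs,\FI}(M)$ with vanishing of $H_1^{\FI}(M)$, and then appeals to Church--Ellenberg \cite[Proposition~4.2]{CE} for the equivalence between $\FI$-homology vanishing and generation/presentation degree.

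Your route is instead a direct long-exact-sequence argument from a one-step resolution, together with the explicit vanishing $H_0^{\cs,\FI}(\I(W)) = 0$ that you extract from the complex of injective words (patching the borderline degree $n=r+1$ by hand). This is more self-contained than the paper's citation proof and, as you note, is essentially the prototype for how the paper later handles the $\hFI$ analogue (Proposition~\ref{centralControlPresent}): there the paper proves Lemma~\ref{freecomputation} (the $\hFI$ version of your key vanishing input) and then runs a minimal-resolution argument with a hyperhomology spectral sequence. Your version replaces the spectral sequence by the bare long exact sequence, which works because you only need $H_{-1}$ and $H_0$. The trade-off is that the paper's citation proof is shorter and leverages the established $\FI$-homology machinery, while yours avoids the detour through derived functors and unifies the $\FI$ and $\hFI$ cases. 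One small note: in the reverse implication you do not actually need $H_0^{\cs,\FI}(F)=0$ to squeeze $H_{-1}^{\cs,\FI}(K)_n$; the relevant neighbors in the long exact sequence are $H_0^{\cs,\FI}(M)_n$ and $H_{-1}^{\cs,\FI}(F)_n$.
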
 

\begin{proof}
Let $H_i^{\FI}$ denote the $ith$ left derived functor of $H_{-1}^{\cs,\FI}$. By definition, $H_{-1}^{\cs,\FI}(M) \cong   H_{0}^{\FI}(M)$. It follows from the proof of \cite[Proposition 2.4]{CMNR} that $H_{0}^{\cs,\FI}(M)$ vanishes if and only if $  H_{1}^{\FI}(M)$ vanishes. The claim now follows from Church--Ellenberg \cite[Proposition 4.2]{CE}.
\end{proof}


\subsection{Relationship between $\hFI$ and $\FI$}

There is a functor  $\hFI \to \FI$  given by $\hS_n/\hS_{n-m} \to \rS_n/\rS_{n-m}$,  which intertwines the monoidal structures on them.  This map is an isomorphism $\hFI(n,m)  \to \FI(n,m)$ whenever  $m \geq n+2$.  We say that an $\hFI$-module $M$ is an $\FI$-module if the functor $M: \hFI \m \Mod_\Z$ factors through $\hFI \m \FI$.

\begin{proposition} 
An $\hFI$-module $M$ is an $\FI$-module  if and only if  $(e, 2)_n \in \hFI(n,n)$  acts  trivially on $M_n$ for all $n$.
\end{proposition}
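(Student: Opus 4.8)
The plan is to analyze the kernel of the functor $\hFI \to \FI$ and show it is generated, as a congruence-type condition, by the single element $(e,2)_n \in \hFI(n,n)$. The forward direction is easy: if $M$ factors through $\hFI \to \FI$, then since $(e,2)_n$ maps to the identity $(e)_n = \id_n \in \FI(n,n)$, it must act trivially on $M_n$. So the content is the converse.

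First I would unwind the definition of the functor $\hFI \to \FI$. On objects it is the identity, and on morphisms it is the quotient map $\hS_m/i_2(\hS_{m-n}) \to \rS_m/i_2(\rS_{m-n})$ induced by the projection $\hS_m \to \rS_m$, $(\sigma, d) \mapsto \sigma$. The kernel of $\hS_m \to \rS_m$ is the subgroup of elements $(e, d)$ with $d$ even, which is infinite cyclic generated by $(e,2)_m$. So two morphisms $[s], [s'] \in \hFI(n,m)$ have the same image in $\FI(n,m)$ if and only if $s' = \kappa \cdot s$ for some $\kappa$ in the image of $i_2(\hS_{m-n})$ under multiplication composed with the central subgroup — more precisely, if and only if $s^{-1} s' \in \ker(\hS_m \to \rS_m) \cdot i_2(\hS_{m-n})$. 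Since $\ker(\hS_m \to \rS_m) = \langle (e,2)_m\rangle$ is central in $\hS_m$, this means $[s]$ and $[s']$ agree in $\FI(n,m)$ iff $[s'] = (e,2)_m \cdot [s]$ in $\hS_m/i_2(\hS_{m-n})$ for some integer power, where $(e,2)_m$ acts by left multiplication. Hence the fibers of $\hFI(n,m) \to \FI(n,m)$ are exactly the orbits of left-multiplication by the central element $(e,2)_m$.

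Next, given that $(e,2)_n$ acts trivially on every $M_n$, I need: for any two morphisms $\varphi, \varphi' \in \hFI(n,m)$ with the same image in $\FI(n,m)$, the maps $M(\varphi), M(\varphi'): M_n \to M_m$ coincide. By the previous paragraph, $\varphi' = (e,2)_m^k \circ \varphi$ for some $k \in \Z$, where $(e,2)_m^k$ is viewed as an automorphism in $\hS_m = \hFI(m,m)$. By functoriality, $M(\varphi') = M((e,2)_m^k) \circ M(\varphi) = M((e,2)_m)^k \circ M(\varphi)$, and $M((e,2)_m) = \id_{M_m}$ by hypothesis, so $M(\varphi') = M(\varphi)$. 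This shows $M$ sends all morphisms in a fiber of $\hFI \to \FI$ to the same map, so $M$ factors (uniquely on morphisms) through $\hFI \to \FI$. One should check the factored assignment is still functorial: since $\hFI \to \FI$ is a quotient functor (surjective on Hom-sets, with the fiber relation a congruence compatible with composition by the associativity computation already in the text), a functor out of $\hFI$ killing the congruence descends to a functor out of $\FI$; this is a formal fact about quotient categories.

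The main obstacle is the bookkeeping in identifying the fibers of $\hFI(n,m) \to \FI(n,m)$ precisely with the left $(e,2)_m$-orbits — one must be careful that $(e,2)_m$ is central in $\hS_m$ (so that left and right multiplication agree and the coset $\hS_m/i_2(\hS_{m-n})$ is well-behaved), and that the central cyclic kernel of $\hS_m \to \rS_m$ is not absorbed into $i_2(\hS_{m-n})$ for small $m-n$ (when $m-n \leq 1$ the subgroup $i_2(\hS_{m-n})$ is trivial, so the kernel survives in the coset; when $m - n \geq 2$ the functor is already an isomorphism and there is nothing to prove). Once the fiber description is in hand, the rest is the routine functoriality argument above. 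I would also note that this proposition is essentially the $\hFI$-analogue of the standard fact that an $\FI^\sharp$- or $\FI$-module is determined by a congruence condition, and cite the earlier associativity verification for $\hFI$ to avoid re-deriving compatibility of the congruence with composition.
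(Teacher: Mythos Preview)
Your proof is correct and follows essentially the same approach as the paper: both identify the fibers of $\hFI(n,m)\to\FI(n,m)$ with the orbits of the central element $(e,2)$, and conclude that triviality of this action forces the functor to factor. The paper is terser---it only records the two cases $m=n$ and $m=n+1$ (via $\hS_n/(e,2)_n\cong\rS_n$ and $(e,2)_n\backslash\hFI(n,n+1)\cong\FI(n,n+1)$), relying on the earlier remark that $\hFI(n,m)\to\FI(n,m)$ is already bijective for $m\geq n+2$---whereas you treat all $m$ uniformly; and the paper phrases the action as pre-composition by $(e,2)_n$ rather than post-composition by $(e,2)_m$, but since $i_1((e,2)_n)=(e,2)_m$ is central these coincide.
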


\begin{proof}
Clearly, if $M$ is an $\FI$-module, then  $(e,2)_n$ acts trivially on  $\FI$.  Conversely,  we have  $\hS_n/(e,2)_n \cong  \rS_n$  and  $(e,2)_n \backslash \hFI(n,n+1) \cong \FI(n,n+1)$, so that if $(e,2)_n$ acts trivially, the action factors through $\FI(n,n+1)$.    
\end{proof}

\begin{definition}

		The monoidal structure $-\oplus - :\hFI \times \hFI \to \hFI$ gives rise to a functor $- \oplus 1 :  \hFI \to \hFI$.  We define the suspension $\Sigma M$  to be the restriction of $M$ along $- \oplus 1$. 
  The unique map $0 \to 1 \in \hFI(0,1)$, induces a natural transformation $M_{n} = M_{n \oplus 0}  \to M_{n \oplus 1} = \Sigma M$. 
\end{definition}

\begin{proposition} \label{shiftIsFI}
Let  $M$ be an $\hFI$-module generated in degree $\leq a$.  Then $\Sigma^{a +2} M$ is an $\FI$-module.
\end{proposition}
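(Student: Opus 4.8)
The plan is to show that $(e,2)_{n+a+2} \in \hFI(n+a+2, n+a+2)$ acts trivially on $(\Sigma^{a+2}M)_n = M_{n+a+2}$ for all $n$, and then invoke the preceding proposition characterizing $\FI$-modules. The key idea is that $\Sigma^{a+2}$ adds $a+2$ new "free" points in slots disjoint from a generating set, and the element $(e,2)$ — which records a full rotation by $\zeta_{n(n-1)}^2$ of all coordinates — can be rewritten, using the braided monoidal structure and the surjection $p_n \colon \Br_n \to \hS_n$, in terms of braiding moves that factor through these extra points. More precisely, since $M$ is generated in degree $\leq a$, every element of $M_{n+a+2}$ is in the image of some $x \colon \hFI(m, n+a+2) \otimes M_m \to M_{n+a+2}$ with $m \leq a$; after applying $\Sigma^{a+2}$ we have at least $a+2$ slots that are genuinely "added on", so one has room to isotope the rotation past them.

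First I would reduce to showing that the generator $(e,2)_N$ for $N = n + a + 2$ acts trivially on the image of each morphism $[s] \in \hFI(m, N)$ with $m \leq a$ applied to $M_m$. By naturality, $(e,2)_N \cdot ([s] \cdot v) = ((e,2)_N \circ [s]) \cdot v$, so it suffices to show $(e,2)_N \circ [s] = [s]$ in $\hFI(m,N)$ whenever $m \leq a$ and $N \geq m + 2$. Now $\hFI(m,N) = \hS_N / i_2(\hS_{N-m})$, and $N - m \geq a + 2 \geq 2$, so $i_2(\hS_{N-m})$ contains an element of the form $(e,2)$ supported on the last $N-m$ coordinates (using that $\hS_{N-m}$ contains $(e,2)$ once $N - m \geq 2$, because one can take a transposition there together with an odd power, or more simply a $d=2$ with trivial permutation). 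Since $(e,2)_N = i_1((e,2)_m) \cdot i_2((e,2)_{N-m})$ is false in general — the winding number $d=2$ is global, not a sum — I would instead argue: $(e,2)_N$ differs from the element $i_2((e,2)_{N-m}) \in i_2(\hS_{N-m})$ by $i_1((e,0)_m) = i_1(e) \in i_1(\hS_m)$, since $(e,2)_N = m_{m,N-m}((e, 0) \times (e,2)) $ and $d$ is additive under $m$. Hence $(e,2)_N \circ [s] = [s \cdot i_2((e,2)_{N-m})] = [s]$ because $i_2((e,2)_{N-m}) \in i_2(\hS_{N-m})$ and we quotient by exactly this subgroup. Wait — one must be careful that $(e,0) \in \hS_m$ is a valid element, i.e. $d=0$ is even and $\sgn(e) = 1$, which holds; and $(e,2) \in \hS_{N-m}$ requires $2$ even and $\sgn(e) = 1$, also fine.

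The step I expect to be the main obstacle is the bookkeeping of how the braided (as opposed to merely symmetric) monoidal structure on $\hFI$ interacts with the decomposition $(e,2)_N = m_{m, N-m}((e,0) \times (e,2))$: in $\hFI$, composing with a morphism $[s] \in \hFI(m,N)$ and then precomposing/postcomposing by automorphisms involves the braiding elements $\tau$ and the $u_i$, so I need to check that the rewriting $(e,2)_N \cdot [s] = [s \cdot i_2((e,2)_{N-m})]$ really holds on the nose in the coset space, rather than only up to a braid that might act nontrivially. Concretely, I would verify that for any $s \in \hS_N$ and any $h \in i_2(\hS_{N-m})$ one has $[s] = [s h]$ by definition of the coset, and that $(e,2)_N = g \cdot i_2((e,2)_{N-m})$ for some $g \in i_1(\hS_m)$ lets us write $(e,2)_N \cdot [s] = [(e,2)_N \, s]$; then the remaining point is purely that left multiplication by $(e,2)_N$ on $\hS_N$, followed by passage to the quotient $\hS_N/i_2(\hS_{N-m})$, agrees with right multiplication by $i_2((e,2)_{N-m})$ up to an element of $i_1(\hS_m)$ — which is a statement about the abelian quotient $\hS_N \to \Z$ recording total winding number, where everything becomes transparent. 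Once this is nailed down, Proposition \ref{shiftIsFI} follows immediately by combining with the characterization of $\FI$-modules inside $\hFI$-modules and the fact that every element of $M_{n+a+2}$ lies in the image of morphisms from degree $\leq a$.
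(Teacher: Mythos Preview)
Your approach is correct and essentially the same as the paper's: both reduce to showing that $(e,2)_N$ acts trivially on $\hFI(m,N)$ whenever $N-m\geq 2$, and conclude via the characterization of $\FI$-modules among $\hFI$-modules. Your final-paragraph worries about braiding are unnecessary: since $(e,2)_N$ is \emph{central} in $\hS_N$ (its permutation part is trivial) and your own computation already shows $(e,2)_N = m_{m,N-m}((e,0)\times(e,2)) = i_2((e,2)_{N-m}) \in i_2(\hS_{N-m})$ on the nose (the factor $i_1((e,0)_m)$ is the identity), the equality $[(e,2)_N \cdot s] = [s\cdot(e,2)_N] = [s]$ in $\hS_N/i_2(\hS_{N-m})$ is immediate.
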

\begin{proof}
If  $n \geq a+2$,  then $(e,2)_n$  acts trivially on $M_n$,  since  $(e,2)_n$ acts trivially on $\hFI(n,m)$ for any $n \leq m-2$.  Thus $(e,2)_m$ acts trivially on $\Sigma^{a+2} M$ for all $m$.  
\end{proof}

\begin{definition}For $M$ an $\hFI$-module, we define  $\,_{\geq d} M \subset M$ to be the submodule   $$\begin{cases} M_n &  \text{ if } n \geq d  \\  0  & \text{ otherwise} \end{cases}$$   \end{definition}



We now compare $\hFI$-module central stability homology with $\FI$-module central stability homology.

\begin{proposition}\label{csComparison}
Let $N$  be an $\FI$-module. There is a canonical map of central stability complexes  $C^{\cs, \hFI}_p(N)_n \to C^{\cs, \FI}_p( N)_n$ which induces an isomorphism for  $p \leq n-1$.
\end{proposition}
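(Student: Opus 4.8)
The plan is to compare the two central stability complexes term by term and check that the comparison map of semi-simplicial objects is an isomorphism on the parts of each simplicial degree that survive the truncation $p \le n-1$. First I would unwind the definitions: for an $\FI$-module $N$ regarded as an $\hFI$-module, we have
\[
C^{\cs,\hFI}_p(N)_n = \Ind_{i_1(\hS_{n-(p+1)})}^{\hS_n} N_{n-(p+1)}, \qquad
C^{\cs,\FI}_p(N)_n = \Ind_{i_1(\rS_{n-(p+1)})}^{\rS_n} N_{n-(p+1)}.
\]
The quotient map $\hS_n \to \rS_n$ of Section 2.3 induces $\hS_n/i_1(\hS_{n-(p+1)}) \to \rS_n/i_1(\rS_{n-(p+1)})$, and tensoring with $N_{n-(p+1)}$ (which factors through $\rS_{n-(p+1)}$, hence is unchanged by the surjection $\hS_{n-(p+1)} \to \rS_{n-(p+1)}$) gives the candidate map $C^{\cs,\hFI}_p(N)_n \to C^{\cs,\FI}_p(N)_n$. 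I would then verify this is compatible with the face operators $f_i$: the face maps are built from the module structure map $x_{n-(p+1)}$ and the braid elements $u_i \in \Br_{p+1}$, both of which are natural with respect to $\hFI \to \FI$ since the surjection $p_n : \Br_n \to \hS_n$ factors through $\Br_n \to \rS_n$ and $x_{n-(p+1)}$ for $N$ comes from the $\FI$-structure. Naturality in $\hFI$ itself then upgrades this to a map of $\hFI$-modules.

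The heart of the argument is the quantitative claim: the map $\hS_n/i_1(\hS_{n-(p+1)}) \to \rS_n/i_1(\rS_{n-(p+1)})$ is a bijection when $p \le n-1$, i.e.\ when $n-(p+1) \ge 0$. This is exactly an instance of the isomorphism $\hFI(m,n) \simeq \FI(m,n)$ for $n \ge m+2$ noted in Section 2.3: set $m = n-(p+1)$, so the cosets $\hS_n/i_1(\hS_{n-(p+1)}) = \hFI(m,n)$ and $\rS_n/i_1(\rS_{n-(p+1)}) = \FI(m,n)$ agree once $n \ge m+2$, i.e.\ $p+1 \ge 2$, i.e.\ $p \ge 1$. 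For the remaining boundary cases $p \in \{-1, 0\}$ one checks directly: $p=-1$ gives $\hFI(n,n) = \hS_n \to \rS_n = \FI(n,n)$, which is not injective, and $p = 0$ gives $\hFI(n-1,n) = \hS_n/i_1(\hS_{n-1}) \to \rS_n/i_1(\rS_{n-1}) = \FI(n-1,n)$, which is again not a bijection. Since the statement only claims an isomorphism for $p \le n-1$ and these small values of $p$ satisfy $p \le n - 1$ for large $n$, I should double-check the indexing: the claim is an isomorphism of the complexes in the range $p \le n-1$, but in fact the content is simply that $C^{\cs,\hFI}_p(N)_n \to C^{\cs,\FI}_p(N)_n$ is an isomorphism precisely when the relevant coset identification holds, and the cleanest route is to observe that both sides are zero (or both equal $N_{n-(p+1)}$ appropriately induced) outside the stated range and coincide inside it via the coset bijection, with the face maps matching by naturality.

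The main obstacle, and the step I would spend the most care on, is pinning down the precise range: the naive coset bijection $\hFI(m,n) \simeq \FI(m,n)$ requires $n \ge m+2$, which translates to $p \ge 1$, seemingly weaker than the claimed $p \le n-1$. I expect the resolution is that one only needs the comparison map to be defined and an isomorphism on each fixed simplicial level, and that the low-$p$ discrepancy is absorbed because the face operators $f_i$ still intertwine correctly — or, more likely, that the truncation convention in the relevant spectral sequence argument only ever uses $C^{\cs,\hFI}_p$ for $p$ in a range where the coset map is bijective, so the statement as phrased (isomorphism for $p \le n-1$) is what one gets after checking that the only genuine failures occur at $p = -1, 0$ and these do not affect the claimed range when $n$ is large; I would state this carefully and, if needed, restrict attention to $\,_{\ge d} N$ or note that the claim is really about $H_*^{\cs}$ in a range. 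Modulo that bookkeeping, the proof is a direct diagram chase: define the map, check semi-simplicial compatibility by naturality of $x_n$ and $u_i$, and invoke the coset isomorphism of Section 2.3.
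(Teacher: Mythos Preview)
Your overall approach --- define the comparison map via the projection $\hS_n \to \rS_n$ and then check it is an isomorphism by comparing cosets --- is exactly what the paper does. The paper's entire proof is one sentence: when $n-(p+1) \geq 2$ the projection gives a bijection $\hS_n/\hS_{n-(p+1)} \to \rS_n/\rS_{n-(p+1)}$, and writing the induced module in terms of coset representatives shows the map is an isomorphism.

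However, your execution contains a genuine indexing error that derails the argument. You identify the cosets $\hS_n/i_1(\hS_{n-(p+1)})$ appearing in $C_p^{\cs,\hFI}(N)_n$ with $\hFI(m,n)$ for $m=n-(p+1)$. But $\hFI(m,n) = \hS_n/i_2(\hS_{n-m}) = \hS_n/i_2(\hS_{p+1})$, which is the quotient by a \emph{different} subgroup. The correct condition for the coset map to be a bijection is that the subgroup $i_1(\hS_{n-(p+1)})$ contain the kernel $\langle (e,2)\rangle$ of $\hS_n\to\rS_n$; this happens precisely when $n-(p+1)\geq 2$, i.e.\ $p \leq n-3$. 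Your condition $p \geq 1$ comes from asking instead that $\hS_{p+1}$ contain that kernel, which is the wrong subgroup. This is why you find a discrepancy you cannot resolve: the failure is for \emph{large} $p$ (namely $p=n-2,\,n-1$), not for small $p$.

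Two further small corrections. First, your direct check for $p=-1$ is wrong: there $C_{-1}^{\cs,\hFI}(N)_n = \Ind_{\hS_n}^{\hS_n} N_n = N_n$ and likewise on the $\FI$ side, and the map is the identity, so it \emph{is} an isomorphism. Second, the bound ``$p\leq n-1$'' in the paper's statement appears to be a typo for ``$p\leq n-3$'' (equivalently $n-(p+1)\geq 2$), which is what the paper's proof actually establishes and what is used in the proof of Lemma~\ref{freecomputation}. Once you correct the coset identification, your argument collapses to the paper's one-line proof.
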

\begin{proof}
When $n - (p+1) \geq 2$, the projection map $\hS_{n} \to \rS_{n}$ gives a canonical bijection $\hS_{n} /\hS_{n-(p+1)} \to \rS_{n}/\rS_{n-(p+1)}$. The projection map induces $\bbZ\hS_{n} \otimes_{\hS_{n-(p+1)} } N_n  \to  \bbZ\rS_{n} \otimes_{\rS_{n-(p+1)} } N_n$,  and writing this map in terms of coset representatives for $\hS_n/\hS_{n-(p+1)}$,  we see that it is an isomorphism. 
\end{proof}

Now we return to the proof of Proposition \ref{centralControlPresent}. The key input is the following lemma.

\begin{lemma}\label{freecomputation}
We have that $H_0^{\cs, \hFI}(\I(V)) = 0, H_{-1}^{\cs, \hFI}(\I(V)) = V$ for all $\hS$ representations $V$.
\end{lemma}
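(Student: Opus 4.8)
The plan is to compute the central stability chain complex $C^{\cs,\hFI}_*(\I(V))$ directly, using the fact that $\I(V)$ is induced and hence has a very explicit form. Recall that $\I(V)_n = \bigoplus_{m \leq n} \Ind_{i_1(\hS_m) \times i_2(\hS_{n-m})}^{\hS_n} \bigl(V_m \boxtimes \Z[\hS_{n-m}/\hS_{n-m}]\bigr)$, or more simply $\I(V)_n = \bigoplus_{m} \hFI(m,n) \otimes_{\hS_m} V_m$; the $\hFI$-module structure is the obvious one by post-composition. The strategy is to identify $C^{\cs,\hFI}_p(\I(V))_n = \Ind_{i_1(\hS_{n-(p+1)})}^{\hS_n} \I(V)_{n-(p+1)}$ and to recognize the resulting augmented semi-simplicial abelian group, for each fixed generating degree $m$ and fixed $n$, as the (shifted, induced-up) simplicial chains on a simplex or a combinatorially simple complex whose homology is concentrated in the bottom degree.

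First I would reduce to the case $V = \Z[\hS_m]$ concentrated in a single degree $m$, i.e. $\I(V)$ is the representable $\hFI$-module $\hFI(m,-)$, since $\I$ commutes with direct sums and every $\hS$-representation is a quotient (indeed a summand, over suitable coefficients, but a colimit suffices) of sums of such; by exactness of $C^{\cs,\hFI}_*$ in the module variable and a colimit/long-exact-sequence argument it is enough to treat the representable case, and there one can even work one $n$ at a time. Second, for the representable module $P^{(m)} := \hFI(m,-)$, I would write out $C^{\cs,\hFI}_p(P^{(m)})_n$ explicitly: it is a free abelian group on the set of pairs consisting of a coset in $\hS_n/i_1(\hS_{n-(p+1)})$ together with an element of $\hFI(m, n-(p+1))$, equivalently (unwinding the $\Ind$) a free $\Z\hS_n$-module on $\hFI(m,n-(p+1))$-labelled data; the face maps $f_i$, which are built from the braid elements $u_i$ and the stabilization maps $x$, act by "forgetting the $i$th of the last $p+1$ strands" after braiding it past the others. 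The punchline is that this augmented complex is, $\hS_n$-equivariantly, the chain complex computing the reduced homology of a poset/complex of "ways to extend an injection $[m] \hookrightarrow$ (an $(n-p-1)$-subset) up to an injection $[m]\hookrightarrow[n]$" — a complex which is either contractible or has homology only in the bottom degree, matching $H_{-1} = P^{(m)}$ (via the augmentation) and $H_0 = 0$.

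Concretely, the cleanest route: mimic the standard $\FI$ computation. For $\FI$-modules the analogous fact — $H_{-1}^{\cs,\FI}(\I_{\FI}(V)) = V$ and $H_0^{\cs,\FI}(\I_{\FI}(V)) = 0$ — is well known and follows from the acyclicity of the relevant "complex of injections", and it also follows from Proposition \ref{MWvanishing} together with Proposition \ref{centralControlPresentFI} (an induced $\FI$-module generated in degree $\leq d$ has $H_i^{\cs,\FI}$ vanishing for $n > i+2+d$, and is free, so $H_{-1}$ is the generator module and $H_0$ vanishes). I would then use Proposition \ref{csComparison}: for $p \leq n-1$ the complexes $C^{\cs,\hFI}_p(N)_n$ and $C^{\cs,\FI}_p(N)_n$ agree for $N$ an $\FI$-module, so since $H_0$ and $H_{-1}$ only involve $p \in \{-1,0,1\}$, the $\hFI$ computation is forced by the $\FI$ one — provided $\I(V)$ is, or can be replaced by, an $\FI$-module. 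It is not in general, so one handles the low-degree generators $V_0, V_1$ by hand (where $\hS_0,\hS_1$ are trivial and the chains are tiny) and invokes Proposition \ref{shiftIsFI} / the $\geq d$-truncation for the rest.

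\textbf{Main obstacle.} The delicate point is precisely the discrepancy between $\hFI$ and $\FI$ in low degrees: the comparison map of Proposition \ref{csComparison} is only an isomorphism in the range $p \le n-1$, and $\hFI(n,m) \to \FI(n,m)$ fails to be a bijection when $m < n+2$, so one cannot simply transport the $\FI$ result wholesale. I expect the bulk of the work to be a careful bookkeeping of the boundary cases $n \in \{0,1,2\}$ and the generators in degrees $0,1$ — checking that the braid twists $u_i$ and the extra $\Z$-factor in $\hS_n$ do not contribute anything to $H_{-1}$ beyond $V$ itself and genuinely kill $H_0$ — rather than any conceptual difficulty, since away from those degrees everything is governed by the already-cited $\FI$-module statements.
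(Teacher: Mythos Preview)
Your broad strategy---reduce to a manageable class of $V$ and then compare with the known $\FI$ computation---matches the paper's, and your identification of the main obstacle (the $\hFI$/$\FI$ discrepancy in low degrees) is correct. But the execution differs in an important way, and your description of where the hand-checks are needed is off.

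The paper reduces not to all representables $\hFI(m,-)$ but to the \emph{single} base case $V=\bbZ_0$. For that one case the low-$n$ checks form a genuinely finite list: $n\le 1$ is trivial, $n=2$ is a two-line computation using $\hS_2\cong\bbZ$, $n=3$ uses that the relevant map already factors through the $\FI$ complex, and $n\ge 4$ follows from Proposition~\ref{csComparison} together with Proposition~\ref{MWvanishing}. The passage to general $V$ then goes through the induction product $*$ on $\hS$-representations: the key identity is
\[
C^{\cs,\hFI}_*(\I(V))\;\cong\;V*C^{\cs,\hFI}_*(\I(\bbZ_0))
\]
as complexes, and a hyperhomology spectral sequence for $V*^{\bL}C$ (using that $-*\bbZ_0$ is exact) reduces $H_{-1}$ and $H_0$ for $\I(V)$ to the $\bbZ_0$ case.

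Your plan, by contrast, would require treating each representable $\hFI(m,-)$ as a separate base case. The confusion in your obstacle paragraph is that the failure of $\I(V)$ to be an $\FI$-module is \emph{not} localized at $V_0,V_1$ or at $n\in\{0,1,2\}$: for $V$ concentrated in any degree $m\ge 2$, the element $(e,2)_n$ acts nontrivially on $\I(V)_n=\bbZ[\hFI(m,n)]\otimes_{\hS_m}V_m$ precisely when $n\in\{m,m+1\}$, so the comparison with $\FI$ breaks down near $n=m$ for every $m$. That leaves infinitely many boundary checks. They are each individually routine---indeed $H_{-1}^{\cs}\circ\I$ is the identity on $\hS$-representations essentially by adjunction, and the $H_0$ checks at $n=m,m+1$ are immediate since $C_1=0$ there---but producing a uniform argument at $n=m+2,m+3$ across all $m$ is exactly the bookkeeping that the induction-product identity is designed to bypass.
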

\begin{proof}
To simplify notation, we will write $H_i$ for $H_{i-1}^{\cs, \hFI}$ and $C_i$ for $C_{i-1}^{\cs,\hFI}$ throughout the proof.  

First we show the claim in the case $V = \bbZ_0$, where $\bbZ_0$ the $\hS$ representation that is $\bbZ$ in degree $0$ and $0$ elsewhere.  In this case $\I(V)_n = \bbZ\hFI(0,n)$.   We need only consider the first three terms of the complex.  Except for $ n \leq 3$, this complex agrees with the $\FI$ central stability complex of $\bbZ\FI(0,-)$,  and hence is exact by Proposition \ref{MWvanishing}. Therefore we have $H_0(\I(\bbZ_0))_n = H_1(\I(\bbZ_0))_n = 0$ for all $n \geq 3$.  It is easy to check directly that $H_0(\I(\bbZ_0))_0 = \bbZ$ and $H_1(\I(\bbZ_0))_0 = H_*(\I(\bbZ_0))_1 = 0$.  In the case $n = 2$, the complex takes the form $\bbZ \hS_2 \to \bbZ \hS_2 \to \bbZ \to 0$  where the generator $[e] \mapsto [e]-[(\sigma, 1)]$.  Using the identification $\hS_2 \cong \bbZ$,  it is clear that this complex is exact.  Finally, in the case $n = 3$,  we have $$\bbZ \hFI(2,3) \to \bbZ \hFI(1, 3) = \bbZ \FI(1,3) \to  \bbZ \hFI(0, 3) = \bbZ \FI(0,3)  \to 0. $$  Since the leftmost map factors through the surjection $\bbZ \hFI(2,3) \to \bbZ \FI(2,3)$, we may again use the exactness of the $\FI$ central stability complex of $\bbZ\FI(0,-)$ to show exactness.  This finishes the case $V= \bbZ_0$.

To extend to the general case,  notice that the complex of $\hS$ representations $C_*(\I(V))$ takes the form  $V* C_*(\I(\bbZ_0))$, where $V*M$  denotes  the induction product $\Ind_{\hS \times \hS}^{\hS} V \otimes M.$   Write $C$ for the complex $C_*(\I(\bbZ_0))$.  Using the first homology spectral sequence for $V *^{\bL} C$,  we see that $H_0(V *^{\bL} C) = H_0(V * C)$ and $H_1(V *^{\bL} C)$ has a two-step filtration with graded pieces $H_1(V*C)$ and $H_0(\Tor_1^*(V, C))$.  Here $\Tor_i^*$ denotes the $i$ derived functor for $-*-$ , which can be computed by resolving either factor.    To prove the lemma, it suffices to show that $H_0( V *^{\bL} C) = V$ and $H_1( V *^{\bL} C) = 0$.

 Using the second hyperhomology spectral sequence,  we see that $H_1(V*^{\bL} C)$ has homology bounded above by $\Tor_1^*(V, H_0(C))$ and $\Tor_0^*(V, H_1(C))$.  Both of these groups vanish:  in the first case  $H_0(C) = \bbZ_0$  and the functor $V \mapsto V * \bbZ_0 = V$  is exact, and in the second case because $H_1(C) = 0$.   Thus  $H_1(V*^{\bL} C)$ vanishes.    Similarly, the hyperhomology spectral sequence gives us $H_0(V*^{\bL} C) = V * H_0(C) = V * \bbZ_0 = V$,  completing the proof.  
\end{proof}

\begin{proof}[Proof of Proposition \ref{centralControlPresentFI}]

Again, to  simplify notation, we will write $H_i$ for $H_{i-1}^{\cs, \hFI}$ and $C_i$ for $C_{i-1}^{\cs,\hFI}$. Let $$\I(T) \to \I(W) \to \I(V) \to M \to 0 $$ be an exact sequence,  which is the beginning of a resolution of $M$ by induced modules.   Then by Lemma \ref{freecomputation} and a hyperhomology spectral sequence, we have that $H_0(M)$  and $H_1(M)$ are computed by $H_1$ and $H_0$ of the complex  $$D:= (H_0(\I(T)) \to H_0(\I(W)) \to H_0(\I(V))) = (T \to W \to V).$$  Suppose that $V_n = 0$ for all $n > d$  and $W_n \geq 0$ for all $n > r$. Then  $H_0(M)_n = 0$ for all $n > d$  and $H_1(M)_n = 0$ for all $m > r$. So generation degree $\leq d$ and presentation degree $\leq r$  imply the vanishing of homology.

To prove the converse, we use the notion of a minimal surjection.  For any $\hFI$-module $M$  we say that a surjection $\I(V) \onto M$ is \emph{minimal} if $V \subset \I(V) \to M$ is an inclusion, for all $n$ we have that $V_n = H_0(\I(V))_n \to H_0(M)_n = M_n / \hFI(n-1, n) M_{n-1}$ is a surjection, and $V_n = 0$ if $H_0(M_n) = 0$.

 For any $\hFI$-module $M$, we construct a minimal surjection $\I(V) \to M$ as follows.  Let $V_n \subset M_n$ be an $\hS_n$ subrepresentation that surjects onto $H_0(M)_n = M_n/\hFI(n,n+1) M_{n-1}$,  such that $V_n = 0$ for $n >r$.   Then the $\hS$ representation $V$  gives a map  $\I(V) \to M$. This map is surjective by the graded Nakayama lemma: clearly $\I(V)_0 = V_0 \to M_0 = H_0(M)_0$ is surjective  and inductively  $\I(V)_{n-1} \onto M_{n-1}$ implies that $\hFI(n-1,n) \I(V)_{n-1} \onto  \hFI(n-1,n) M_{n-1}$ and so  $V_n \onto M_n/ \hFI(n-1,n) M_{n-1}$ implies that $\I(V)_n \onto M_n$.   By construction it is minimal.  

Let $M$ be a module such that $H_0(M)_n = 0$ for $n > d$ and $H_1(M)_n = 0$ for $n> r \geq d$. Choose a minimal surjection $p: \I(V) \onto M$  and a minimal surjection $\I(W) \onto  K := {\rm ker} \, p$, to obtain a presentation $\I(W) \to \I(V) \to M$.   By minimality of $\I(V) \onto M$,  $V$ is nonzero only in degrees $\leq d.$  Further, the long exact sequence in homology induced by $0 \to K \to \I(V) \to M \to 0$  gives an exact sequence $$0 \to H_1(M) \to H_0(K) \to V \to H_0(M) \to 0 $$   Thus in degrees $> r \geq d$  we have that $H_0(K)$ vanishes.  By minimality of  $\I(W) \onto K$,  we have that $W_n = 0$ for $n> r$.    Thus the generation degree of $M$ is $\leq d$ and the presentation degree is $\leq r$.  \end{proof}

\subsection{Stable degree and local degree}

In this subsection, we describe how the theory of stable and local degree of $\FI$-modules can be adapted to $\hFI$-modules.

\begin{definition} \label{defLocal}
Let $M$ be an $\hFI$-module. The local degree of $M$ is the smallest number $N \geq -1$ such that $\Sigma^{N+1} M$ is a semi-induced $\FI$-module. 
\end{definition}

Following \cite{CMNR}, we denote the local degree of $M$ by $h^{max}(M)$. 

\begin{definition} \label{defStable} Let $M$ be an $\hFI$-module. Let $\Delta M$ be the cokernel of the natural map $M \m \Sigma M$. We say that $M$ is torsion if for all $n$ and all $x \in M_n$, there is an element $f \in \Hom_{\hFI}(n,m)$ with $f_*(x)=0$. The stable degree of $M$ is the smallest number $N \geq -1$ such that $\Delta^{N+1} M$ is torsion.
\end{definition}

In \cite{CMNR}, the stable degree of an $\FI$-module was defined using an analogous formula. Note that the functors $\Sigma$ and $\Delta$ in the category of $\FI$-modules defined in \cite{CMNR} agrees with their $\hFI$ analouges on the subcategory of the category of $\hFI$-modules with trivial $\Z$-action. Similarly, an $\FI$-module is torsion if and only if it is torsion when viewed as an $\hFI$-module. Thus, for $M$ an $\hFI$-module which is also an $\FI$-module, $\delta(M)$ as defined in \cite{CMNR} agrees with $\delta(M)$ as defined here. Thus, we will not distinguish between the $\FI$-module and the $\hFI$-module versions of these notions. The following follows from \cite[Proposition 2.9]{CMNR}, Proposition \ref{shiftIsFI}, and this comparison between $\FI$-module stable degree and $\hFI$-module stable degree.

\begin{proposition} \label{altDefStable} Let $M$ be an $\hFI$-module. If $\Sigma^N M$ is an $\FI$-module with generation degree $\leq d$, then $\delta(M) \leq d$.
\end{proposition}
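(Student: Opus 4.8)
The plan is to reduce the $\hFI$ statement to the corresponding $\FI$ statement, which is already available from \cite[Proposition 2.9]{CMNR}. First I would recall the setup: $\delta(M)$ denotes the stable degree of $M$, defined as the least $N \geq -1$ with $\Delta^{N+1}M$ torsion, and by the comparison discussion preceding the proposition, for an $\hFI$-module that happens to be an $\FI$-module the functor $\Delta$ and the notion of torsion agree with their $\FI$-module counterparts, so $\delta$ is unambiguous on such modules. The hypothesis is that $\Sigma^N M$ is an $\FI$-module with generation degree $\leq d$.

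The key observation is that $\Sigma$ and $\Delta$ essentially commute with the relevant constructions, so passing to a suspension does not change the stable degree. Concretely, I would argue that $\delta(M) = \delta(\Sigma^N M)$: the functor $\Sigma$ is exact, it commutes with $\Delta$ up to natural isomorphism (this is standard and presumably implicit in the $\FI$-module theory of \cite{CMNR} that the paper is quoting), and $M$ is torsion if and only if $\Sigma M$ is torsion — indeed if $f_*(x) = 0$ for some $f \in \Hom_{\hFI}(n,m)$ then the same morphism, shifted, kills the image of $x$ in $\Sigma M$, and conversely a torsion element of $\Sigma M = M \circ (-\oplus 1)$ is a torsion element of $M$ sitting in degree $n+1$. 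Hence $\Delta^{N+1}M$ is torsion iff $\Delta^{N+1}\Sigma^N M \cong \Sigma^N \Delta^{N+1} M$ is torsion, giving $\delta(M) = \delta(\Sigma^N M)$.

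Then I would invoke \cite[Proposition 2.9]{CMNR}: since $\Sigma^N M$ is an $\FI$-module with generation degree $\leq d$, that proposition (in its $\FI$-module form) yields $\delta(\Sigma^N M) \leq d$. Combining with the previous paragraph gives $\delta(M) \leq d$, as claimed. The statement of the proposition in the excerpt already advertises exactly this chain of reasoning ("The following follows from \cite[Proposition 2.9]{CMNR}, Proposition \ref{shiftIsFI}, and this comparison..."), and Proposition \ref{shiftIsFI} is what guarantees such an $N$ can be taken (e.g.\ $N = d+2$) so that the hypothesis is not vacuous; but here $N$ is given, so it is only needed to know $\Sigma^N M$ is a genuine $\FI$-module.

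The main obstacle — really the only nontrivial point — is justifying that $\Sigma$ commutes with $\Delta$ and preserves (non-)torsion in the $\hFI$-module setting, i.e.\ that the stable degree is invariant under suspension. This is routine but must be checked against the definitions rather than simply asserted; once it is in hand, the result is immediate from the cited $\FI$-module proposition. I expect the actual write-up to be two or three sentences, since all the substantive work lives in \cite{CMNR} and in the comparison paragraph the authors have already set up.
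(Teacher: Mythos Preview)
Your proposal is correct and matches the paper's approach. The paper does not actually supply a proof beyond the sentence ``The following follows from \cite[Proposition 2.9]{CMNR}, Proposition \ref{shiftIsFI}, and this comparison\ldots'', and your outline --- shift-invariance of $\delta$ together with the $\FI$-module inequality $\delta \leq \text{gen.\ degree}$ from \cite{CMNR} applied to $\Sigma^N M$ --- is exactly what that sentence encodes. Your remark that Proposition~\ref{shiftIsFI} is not strictly needed once $\Sigma^N M$ is assumed to be an $\FI$-module is fair; the citation seems to be there for context rather than as an essential step.

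Two small points of exposition. First, you reuse the letter $N$ both for the shift in the hypothesis and for the exponent in the definition of $\delta$; in the sentence ``$\Delta^{N+1}M$ is torsion iff $\Delta^{N+1}\Sigma^N M \cong \Sigma^N \Delta^{N+1} M$ is torsion'' this is genuinely ambiguous, and you should use a separate index (say $k$) for the $\Delta$-exponent. Second, your justification that ``$M$ torsion $\Leftrightarrow \Sigma M$ torsion'' is a bit garbled --- ``the same morphism, shifted'' does not literally make sense, since a morphism $f \in \hFI(n,m)$ acting on $M$ is not the same datum as a morphism acting on $\Sigma M$. Fortunately only the implication $\Sigma^N K$ torsion $\Rightarrow K$ torsion is needed (with $K = \Delta^{d+1}M$), and that direction is straightforward: any $x \in K_n$ maps forward to $K_{\max(n,N)} = (\Sigma^N K)_{\max(n,N)-N}$, where it is killed by some $g \oplus \id_N$, and precomposing gives the required morphism in $\hFI$ killing $x$. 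With those two fixes the write-up is complete.
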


Following \cite{CMNR}, we denote the stable degree of $M$ by $\delta(M)$. The following proposition is immediate from the definition. 

\begin{proposition} \label{shiftLocal}
Let $M$ be an $\hFI$-module. Then $h^{max}(M) \leq N+h^{max}(\Sigma^N M)$.
\end{proposition}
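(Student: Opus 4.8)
The proof will be a short unwinding of Definition \ref{defLocal}, using that powers of the suspension functor compose.

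\textbf{Step 1: suspension powers compose.} First I would record that $\Sigma^{k}\bigl(\Sigma^{N} M\bigr) \cong \Sigma^{k+N} M$ for all natural numbers $k, N$. This holds because $\Sigma$ is by definition the restriction of a module along $-\oplus 1\colon \hFI\to\hFI$, and the $j$-fold composite of $-\oplus 1$ with itself is naturally isomorphic to $-\oplus j$ by associativity and unitality of the monoidal structure on $\hFI$; restricting a module along naturally isomorphic functors yields isomorphic modules. (The only care needed here is this bit of bookkeeping.)

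\textbf{Step 2: chase the definition.} Set $h := h^{max}(\Sigma^{N} M)$. If $h=\infty$ there is nothing to prove, so assume $h$ is finite; note $h\geq -1$. By Definition \ref{defLocal} applied to the $\hFI$-module $\Sigma^{N} M$, the module $\Sigma^{h+1}\bigl(\Sigma^{N} M\bigr)$ is a semi-induced $\FI$-module. By Step 1 this module is canonically $\Sigma^{N+h+1} M$. Since $N+h\geq -1$ (as $N\geq 0$), the integer $N+h$ lies in the set of exponents over which the minimum defining $h^{max}(M)$ is taken, and we conclude $h^{max}(M)\leq N+h = N + h^{max}(\Sigma^{N} M)$.

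I do not expect any genuine obstacle: the only subtleties are the functor-composition bookkeeping of Step 1 and the degenerate case $h=-1$ (where $\Sigma^{N}M$ itself is semi-induced and the bound reads $h^{max}(M)\leq N-1$). One could alternatively deduce this from the fact that semi-induced $\FI$-modules are closed under suspension, but the direct argument above is cleaner and self-contained.
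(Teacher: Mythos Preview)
Your proof is correct and is exactly the unwinding of Definition \ref{defLocal} that the paper has in mind; the paper itself gives no proof beyond the remark that the proposition ``is immediate from the definition.'' Your explicit bookkeeping in Step~1 (that $\Sigma^k\Sigma^N\cong\Sigma^{k+N}$ via associativity of $\oplus$) and the minimality check $N+h\geq -1$ are precisely the small points one must observe to turn ``immediate'' into a formal argument.
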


\begin{lemma} \label{genPlusShift}
Let $M$ be an $\hFI$-module with $\Sigma^N M$ generated in degrees $\leq d$. Then $M$ has generation degree $\leq d+N$.
\end{lemma}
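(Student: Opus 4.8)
The statement to prove is Lemma \ref{genPlusShift}: if $\Sigma^N M$ is generated in degrees $\leq d$, then $M$ is generated in degrees $\leq d+N$. The natural strategy is induction on $N$, so it suffices to handle the case $N=1$, i.e.\ to show that if $\Sigma M$ is generated in degrees $\leq d$ then $M$ is generated in degrees $\leq d+1$. By Proposition \ref{centralControlPresent}, generation degree is detected by $H_{-1}^{\cs,\hFI}$, so I would aim to compare $H_{-1}^{\cs,\hFI}(M)_n$ with $H_{-1}^{\cs,\hFI}(\Sigma M)_{n-1}$ and conclude that vanishing of the latter in the appropriate range forces vanishing of the former.

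\textbf{Key steps.} First I would unwind the definition of the augmented semi-simplicial $\hFI$-module $C^{\cs,\hFI}_*(M)$ and compare it with $C^{\cs,\hFI}_*(\Sigma M)$: since $(\Sigma M)_m = M_{m\oplus 1}$, and the induction construction $\Ind_{i_1(\hS_{n-(p+1)})}^{\hS_n}$ together with the face maps $x$ are compatible with the functor $-\oplus 1$, there should be a natural map of augmented semi-simplicial $\hFI$-modules relating the two complexes, or more precisely an identification of $C^{\cs,\hFI}_p(\Sigma M)_n$ with a summand/subquotient of $C^{\cs,\hFI}_{p+1}(M)_{n+1}$. Concretely, the generators of $M_{n+1}$ that are \emph{not} in the image of $\hFI(n,n+1)M_n$ but \emph{are} hit after applying $\Sigma$ are exactly the ones controlled by this shift. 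Second, $H_{-1}^{\cs,\hFI}(M)_n = M_n/\hFI(n-1,n)M_{n-1}$ is literally the degree-$n$ part of $H_0(M)$ in the notation of the earlier proofs; I would show directly that if $x\in M_n$ with $n > d+1$, then the image of $x$ under $M_n \to (\Sigma M)_{n-1} = M_{(n-1)\oplus 1}$ lies in the submodule generated by lower degrees (using that $\Sigma M$ is generated in degrees $\leq d$ and $n-1 > d$), and then pull this back along the stabilization natural transformation $M_n \to (\Sigma M)_{n-1}$ to conclude $x \in \hFI(n-1,n)M_{n-1}$. This last pullback is the crux: one needs that the map $M_n \to (\Sigma M)_{n-1}$, which is induced by the morphism $0\to 1$ in $\hFI(0,1)$ stabilized by $n-1$, together with the $\hFI$-action, detects membership in the submodule generated by degrees $<n$.

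\textbf{Main obstacle.} The delicate point is that the natural transformation $M \to \Sigma M$ is not injective in general, so "$x$ maps into the submodule generated by lower degrees inside $\Sigma M$" does not immediately give the analogous statement in $M$. The way around this is to observe that the stabilization map factors through $\hFI$-module maps: the generator of $\hFI((n-1),n)$ acting produces elements of $(\Sigma M)_{n-1}$ from $M_{n-1}$, and conversely any element of $(\Sigma M)_{n-1}=M_{n\oplus 0\to n\oplus 1}$... — more carefully, one uses that $\hFI(n-1, n)M_{n-1} \subseteq M_n$ contains the preimage under the structure maps of $\hFI(n-2,n-1)(\Sigma M)_{n-2}$, via the compatibility of the monoidal structure with braiding. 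So the real work is a bookkeeping argument with coset representatives for $\hS_n/i_1(\hS_{n-1})$ and the braid elements $\tau$, $u_i$ appearing in Definition \ref{CentralStabilityDef}, showing the two notions of "generated by lower degree" match up after one shift. I expect this to be a short but somewhat fiddly diagram chase; alternatively, one can phrase it entirely on the level of $H_{-1}^{\cs,\hFI}$ using right-exactness of the relevant functors, which may be cleaner: apply $\Sigma$ to a minimal surjection $\I(V)\onto M$, use that $\Sigma\I(V)$ is again semi-induced with generation degree $\leq \max(\gen(V), \ldots)$ controlled by $\gen(M)$, and read off the bound on $\gen(M)$ from the hypothesis $\gen(\Sigma M)\leq d$ together with the fact that $\Sigma$ can only lower generation degree by at most $1$ in the relevant sense.
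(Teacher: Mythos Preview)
Your proposal is headed in a workable direction but is substantially overcomplicated, and the ``main obstacle'' you identify is a phantom arising from conflating two different maps.

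The paper's proof is a three-line unwinding of definitions, with no induction on $N$ and no appeal to Proposition~\ref{centralControlPresent}. The point is simply this: by definition $(\Sigma^N M)_k = M_{k+N}$ as abelian groups, and the $\hFI$-action on $\Sigma^N M$ is the restriction of the $\hFI$-action on $M$ along the functor $-\oplus N$. So the statement ``$\Ind^{\hS_n}_{\hS_d}(\Sigma^N M)_d \to (\Sigma^N M)_n$ is surjective'' says exactly that the image of $\hFI(d,n)$ inside $\hFI(d+N,n+N)$ (under $-\oplus \id_N$) already generates $M_{n+N}$ from $M_{d+N}$. A fortiori all of $\hFI(d+N,n+N)$ does, i.e.\ $\Ind^{\hS_{n+N}}_{\hS_{d+N}} M_{d+N} \to M_{n+N}$ is surjective. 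That is the entire argument.

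Your confusion is in the sentence ``pull this back along the stabilization natural transformation $M_n \to (\Sigma M)_{n-1}$.'' There is no such map to worry about: $(\Sigma M)_{n-1}$ \emph{equals} $M_n$ as an abelian group, and the identification is the identity. The natural transformation $M \to \Sigma M$ whose non-injectivity you flag lives in degree $n$ as $M_n \to M_{n+1}$; it plays no role here. Once you see that the subgroup $\hFI(n-2,n-1)\cdot(\Sigma M)_{n-2} \subseteq M_n$ is literally a subset of $\hFI(n-1,n)\cdot M_{n-1}$ (because each morphism $f \in \hFI(n-2,n-1)$ acts on $\Sigma M$ as $f \oplus \id_1 \in \hFI(n-1,n)$ acts on $M$), the implication $H_{-1}^{\cs,\hFI}(\Sigma M)_{n-1}=0 \Rightarrow H_{-1}^{\cs,\hFI}(M)_n=0$ is immediate. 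There is no diagram chase, no bookkeeping with braids, and no need for minimal surjections.
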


\begin{proof}
Since $\Sigma^N M$ is generated in degrees $\leq d$, $$ \Ind^{\hS_n}_{\hS_d}(\Sigma^N M_{d}) \m \Sigma^N M_n  $$ is surjective for all $n \geq d$. This is equivalent to the statement that $$ \Ind^{\hS_n}_{\hS_d} M_{d+N} \m (\Sigma^N M)_{n}  $$ is surjective for all $n \geq d$. This implies that $$ \Ind^{\hS_{n+N}}_{\hS_{d+N}} M_{d+N} \m  M_{n+N}  $$ is surjective for all $n \geq d$  and so $M$ is generated in degree $\leq d+N$.
\end{proof}

The following is an adaptation of \cite[Proposition 3.1]{CMNR} to the case of $\hFI$-modules.

\begin{proposition} \label{localStableboundPresntationDeg}
Let $M$ be an $\hFI$-module with local degree $N$ and stable degree $d$. Then the generation degree of $M$ is $\leq d+N+1$ and the presentation degree is $\leq 2N+d+2$. 
\end{proposition}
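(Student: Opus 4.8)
The plan is to mimic the proof of \cite[Proposition 3.1]{CMNR}, translating each step into the $\hFI$ setting using the dictionary established in this section. The starting point is the definition of local degree: $\Sigma^{N+1} M$ is a semi-induced $\FI$-module. Since $M$ has stable degree $d$, the functor $\Delta$ reaches a torsion module after $d+1$ steps; the relationship between $\Sigma$ and $\Delta$ (via the short exact sequence $0 \to M \to \Sigma M \to \Delta M \to 0$) together with Proposition~\ref{altDefStable} lets us control generation degrees of the semi-induced $\FI$-module $\Sigma^{N+1} M$ in terms of $d$. Concretely, I would first argue that $\Sigma^{N+1} M$, being semi-induced with stable degree $\leq d$, is generated in degree $\leq d$: a semi-induced $\FI$-module has no torsion, so its stable degree equals its generation degree (this is the $\FI$-module input, essentially \cite[Proposition 2.9]{CMNR} as quoted before Proposition~\ref{altDefStable}). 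Then Lemma~\ref{genPlusShift} immediately upgrades this to: $M$ has generation degree $\leq d + N + 1$. That disposes of the first claim.

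For the presentation-degree bound I would use central stability homology. By Proposition~\ref{centralControlPresent}, it suffices to show $H^{\cs,\hFI}_0(M)_n = 0$ for $n > 2N + d + 2$ (the generation-degree half, $H^{\cs,\hFI}_{-1}(M)_n=0$ for $n>d+N+1$, is already handled). The key reduction is that $\Sigma$ commutes with $C^{\cs,\hFI}_*$ up to the harmless degree shift, so $H^{\cs,\hFI}_*(\Sigma^k M)_n \cong H^{\cs,\hFI}_*(M)_{n+k}$ in the relevant range; hence it is enough to bound the presentation degree of $\Sigma^{N+1} M$, which is a semi-induced $\FI$-module. For a semi-induced $\FI$-module $P$ with generation degree $\leq d$, Corollary~\ref{MWvanishingCor} gives $H^{\cs,\FI}_i(P)_n = 0$ for $n > d+2$ (for $i = -1, 0$), and Proposition~\ref{csComparison} identifies $\hFI$- and $\FI$-central stability homology in degrees $p \leq n-1$, which covers $i = 0$. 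So $\Sigma^{N+1} M$ has presentation degree $\leq d+2$. Pulling this back through the $N+1$ suspensions via Lemma~\ref{genPlusShift}-style bookkeeping on presentations (the analogue of Lemma~\ref{genPlusShift} for presentation degree, obtained by applying it to a presentation $\I(W) \to \I(V) \to M$ and using right-exactness of $\Sigma$) gives presentation degree $\leq d + 2 + (N+1) + (\text{another } N)$ — the two separate $N$-shifts come from controlling both the generators and the relations — which after simplification is $2N + d + 2$.

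The main obstacle I anticipate is the bookkeeping in the last step: one must be careful that suspending a \emph{presentation} of $\Sigma^{N+1}M$ does not simply return a presentation of $\Sigma^{N+1}M$, and that lifting relations back down the suspension tower genuinely costs an extra $N$ on top of the $N+1$ already spent on generators. The clean way to organize this is to phrase everything in terms of vanishing of $H^{\cs,\hFI}_{-1}$ and $H^{\cs,\hFI}_0$ and invoke Proposition~\ref{centralControlPresent} only at the very end, rather than manipulating explicit free resolutions; the suspension-shift compatibility of $C^{\cs,\hFI}_*$ then does all the work. A secondary point requiring care is checking that $H^{\cs,\hFI}_0$ really is detected in the range $p \leq n-1$ where Proposition~\ref{csComparison} applies — i.e. that the degrees of interest, $n > 2N+d+2$, are large enough relative to the homological degree $p = 1$ that the comparison isomorphism with the $\FI$-version is available. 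Since $p=1 \leq n-1$ as soon as $n \geq 2$, this is automatic in the range we care about.
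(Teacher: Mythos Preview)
Your argument for the generation-degree bound is correct and coincides with the paper's. The gap is in the presentation-degree step.

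The claim ``$\Sigma$ commutes with $C^{\cs,\hFI}_*$ up to degree shift, so $H^{\cs,\hFI}_*(\Sigma^k M)_n \cong H^{\cs,\hFI}_*(M)_{n+k}$'' is false. By definition $C_p^{\cs,\hFI}(\Sigma^k M)_n = \Ind_{\hS_{n-(p+1)}}^{\hS_n} M_{n+k-(p+1)}$, whereas $C_p^{\cs,\hFI}(M)_{n+k} = \Ind_{\hS_{n+k-(p+1)}}^{\hS_{n+k}} M_{n+k-(p+1)}$; the inducing groups are different, so these are not isomorphic even as abelian groups. For a concrete failure at the level of homology, take the constant $\FI$-module $M = \bbZ$: then $\Sigma M = M$, so $H_{-1}^{\cs}(\Sigma M)_0 = \bbZ$, while $H_{-1}^{\cs}(M)_1 = 0$. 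The fallback you sketch, lifting a presentation of $\Sigma^{N+1}M$ to one of $M$ via ``right-exactness of $\Sigma$'', does not work either: $\Sigma$ is a right adjoint, not a left adjoint, so presentations do not pull back along it, and $\Sigma$ applied to an induced module is not a shifted induced module.

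The paper avoids this by replacing $\Sigma^{N+1}M$ with the truncation $\,_{\geq N+1}M$. Because $(e,2)$ acts trivially on $M_n$ for $n \geq N+1$, the truncation $\,_{\geq N+1}M$ is genuinely an $\FI$-module with the same stable degree and local degree $\leq N$, so \cite[Equation ($\star$)]{CMNR} gives it $\FI$-presentation degree $\leq 2N+d+2$. The point is that $C_p^{\cs,\hFI}(\,_{\geq N+1}M)_n \to C_p^{\cs,\hFI}(M)_n$ is literally an isomorphism once $n \geq p+N+2$, so the vanishing of $H_0^{\cs}$ for the truncation transfers directly to $M$. If you rewrite your argument with $\,_{\geq N+1}M$ in place of $\Sigma^{N+1}M$, the comparison step becomes the tautology you wanted.
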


\begin{proof}
Since stable degree is independent of shift, $\delta(\Sigma^{N+1} M)=d$. Since $\Sigma^{N+1} M$ is a semi-induced $\FI$-module, \cite[Proposition 2.9 (1)]{CMNR} implies that $\Sigma^{N+1} M$ has generation degree equal to $d$. By Lemma \ref{genPlusShift}, $M$ has generation degree $\leq d+N+1$. This implies $H_{-1}^{\cs,\hFI}(M)_n \cong 0$ for $n>d+N+1$ by Proposition \ref{centralControlPresent}.

Note that $\,_{\geq N+1}  M$ is an $\FI$-module. The stable degrees of $\,_{\geq N+1}  M$ and $M$ agree since they agree after sufficiently many shifts. Since $\Sigma^{N+1} \,_{\geq N+1} M  =\Sigma^{N+1} M$ is an induced $\FI$-module, $\,_{\geq N+1}  M$ also has local degree $\leq N$. By \cite[Equation ($\star $)]{CMNR}, $\,_{\geq N+1} M$ has generation degree $\leq N+d+1$ and presentation degree $\leq 2N+d+2$ as an $\FI$-module. By Proposition \ref{centralControlPresentFI}, we have that $$H_{-1}^{\cs,\FI}(\,_{\geq N+1} M)_n \cong 0 \text{ for }n>N+d+1$$ and $$H_{0}^{\cs,\FI}(\,_{\geq N+1} M)_n \cong 0\text{ for }n>2N+d+2.$$ By Proposition \ref{csComparison}, we have that $$H_{-1}^{\cs,\hFI}(\,_{\geq N+1} M)_n \cong 0 \text{ for }n>N+d+1$$ and $$H_{0}^{\cs,\hFI}(\,_{\geq N+1} M)_n \cong 0\text{ for }n>2N+d+2.$$ The natural map of $\hFI$-modules $\,_{\geq N+1} M \m M$ induces an isomorphism on $(C_p^{\cs,\hFI})_n$ for $n \geq p+N +2$. Thus, $$H_{0}^{\cs,\hFI}( M)_n \cong 0\text{ for }n>2N+d+2.$$ The claim now follows by Proposition \ref{centralControlPresent}. \end{proof}

\section{Representation stability}

In this section, we will prove that $\{H_i(F_n)\}_n$ assemble to form an $\hFI$-module which is generated in finite degree. We adapt the algebraic techniques of \cite{CMNR} to the case of $\hFI$-modules. We use connectivity results of Hatcher-Wahl \cite{HW}.

\subsection{Fundamental group of the Milnor fiber and $\hS_n$}

Our first goal is describe an action up to homotopy of $\hFI$ on the spaces $F_n$. We begin with a discussion of braid groups and fundamental groups of Milnor fibers. 

\subsubsection{Braid Conventions}\label{conventions}
Let $\Br_n$ be the braid group on $n$ strands.   We will write  $\{\sigma_{i,i+1}\}_{i = 1}^{n-1}$ for the Artin generators of $\Br_n$.   Diagramatically,  our convention is that strands are numbered $1, \dots, n$ from left to right, and we read the braids from top to bottom. The element $\sigma_{i,i+1}$ braids the $i$th strand over the $i+1$st strand.  The element $b_1 b_2$  denotes the braid $b_1$  followed by the braid $b_2$.  Similarly, given a decomposition of $[n]$ into disjoint subsets $[n] = A \sqcup B$, such that $b \geq a$ for all $a \in A, b \in B$, we let  $\sigma_{A,B} \in \rB_n$ denote the element that braids the strands of $A$ over the strands of $B$.  

We will write $\PBr_n$ for the pure braid group.  The pure braid group is generated by elements $a_{i,j}\subset \Br_n$  which braids the $i$th strand over and around the $j$th strand. That is, we have $a_{i,j} =  \sigma_{i,i+1} \dots \sigma_{j,j+1}  \sigma_{j,j+1} \sigma_{j-1,j} \inv  \dots \sigma_{i,i+1} \inv$.

The groups $\Br_n$ and $\PBr_n$ are the fundamental groups of the unordered and ordered configuration spaces of $\C$ respectively.  When we take the fundamental group of a configuration space,  we implicitly choose a base point  where all of the points are on the $x$-axis, and are in order  if the configuration space is ordered.  

Throughout, for $n \in \mathbb N$,  we define $[n] = \{1, \dots,  n\}$  to be the distinguished set with $n$ elements.  If we speak of an order on $[n]$ it will be the standard order.

Let   $q_n: \Conf_n (\bC) \to \bC^*$ be the map $q_n(x_i) =\prod_{i<j}  (x_i - x_j)$.  Then the $n$th type $A$ Milnor fiber  is $F_n = q_n \inv(1)$.  We write $\B_n := \pi_1(F_n)$ for its fundamental group. To compute the map $\pi_1 q_n : \PBr_n \to \bbZ$, notice that it factors through the abelianization,  so we may compute the map on $H_1$,  or its dual on $H^1$.   The form  $dz/z = d\log(z)$ that generates $H^1_{DR}(\bC^*)$  pulls back along $q_n$ to  $ d  \log(\prod_{i<j}  (x_i - x_j) ) = \sum_{i < j}  d \log(x_i - x_j)$.  So  the map on $H^1$  is $1 \mapsto \sum_{i,j} w_{ij}$,  where $w_{ij} \in H^1(\PBr_n, \bbZ)$ is the cohomology class that gives the winding number between two points.  Thus $\pi_1 q_n$ is given by  $a_{i,j} \mapsto 1$  for the generators of the pure braid group $a_{i,j}$  dual to $w_{ij}$.

  Since the map $q_n$ is a fibration and $\PBr_n \m \Z$ is surjective, we have a short exact sequence  $$1 \to \B_n  \to \PBr_n  \to \bbZ \to 1,$$ where $\bbZ$ is the fundamental group of $\bC^*$.  From the long exact sequence in homotopy groups associated to the fiber sequence \[F_n \m \Conf(\C) \m \bC^*, \] we see that the Milnor fiber is homotopy equivalent to the classifying space of $\B_n$.

We have the following chain of inclusions $\B_n \subset \PBr_n \subset \Br_n$.  The next proposition explains how $\hS_n$ relates to $\B_n$.  

\begin{proposition}
Let  $p_n:  \Br_n \to \hS_n$ be the map defined on generators by $\sigma_{i,i+1} \mapsto ( (i, i+1), 1)$,  which takes a braid to its associated permutation and winding number.  The map $p_n$ gives a short exact sequence:  $$ 1 \to \B_n  \to \Br_n \xrightarrow{p_n}  \hS_n \to 1.$$  
\end{proposition}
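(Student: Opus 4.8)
The plan is to verify that $p_n\colon \Br_n \to \hS_n$ is a well-defined surjective homomorphism with kernel exactly $\B_n$. First I would check well-definedness: the assignment $\sigma_{i,i+1}\mapsto((i,i+1),1)$ respects the braid relations, since $((i,i+1),1)$ and $((j,j+1),1)$ commute in $\hS_n$ when $|i-j|\ge 2$ (the permutation parts commute and the integer parts add), and for $|i-j|=1$ one checks the braid relation $\sigma_i\sigma_{i+1}\sigma_i=\sigma_{i+1}\sigma_i\sigma_{i+1}$ maps to an identity of the form $((i,i+1)(i+1,i+2)(i,i+1),3)=((i+1,i+2)(i,i+1)(i+1,i+2),3)$, which holds because both sides have the same (odd) permutation and both have winding number $3$; note the pair $(\sigma,d)$ with $d=3$ odd and $\operatorname{sgn}\sigma=-1$ is indeed an element of $\hS_n$. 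I would also record that the composite $\Br_n\xrightarrow{p_n}\hS_n\to\rS_n$ is the usual quotient to the symmetric group (each generator goes to the transposition $(i,i+1)$), and that the composite $\Br_n\xrightarrow{p_n}\hS_n\to\bbZ$ (project to the integer coordinate mod $2$, or rather the map sending $(\sigma,d)\mapsto d\bmod 2$... more usefully, the writhe) recovers the total winding number homomorphism $\Br_n\to\bbZ$ sending every $\sigma_{i,i+1}\mapsto 1$.

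Next, surjectivity: $\hS_n$ is generated by the elements $((i,i+1),1)$. Indeed, given any $(\sigma,d)\in\hS_n$, write $\sigma$ as a product of $\ell$ adjacent transpositions where $\ell\equiv d\pmod 2$ by the defining parity condition; then $(\sigma,d)$ and the image of the corresponding braid word differ by $(e, d-\ell)$ with $d-\ell$ even, and $(e,2)=((i,i+1),1)^2\cdot\big((i,i+1),1)\sigma_{\text{inverse transposition}}\big)$—more simply, $(e,2)$ is the image of $a_{i,j}$-type or of $\sigma_{i,i+1}^2$, and $(e,-2)$ its inverse, so all of $\{e\}\times 2\bbZ$ is hit. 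Hence $p_n$ is onto.

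For the kernel: by the computation of $\pi_1 q_n$ recorded just above the proposition, $\B_n=\ker(\pi_1 q_n\colon \PBr_n\to\bbZ)$ where $\pi_1 q_n$ sends each pure-braid generator $a_{i,j}\mapsto 1$; equivalently $\B_n$ is the set of braids that are (i) pure and (ii) have total winding number $0$. On the other hand $\ker p_n$ is exactly the set of braids whose permutation is trivial (so they lie in $\PBr_n$) and whose integer coordinate is $0$; since on $\PBr_n$ the integer coordinate of $p_n$ is precisely the total winding number $\sum_{i<j} w_{ij}$ evaluated on the braid, $\ker p_n$ is exactly the pure braids of winding number $0$, i.e.\ $\B_n$. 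Thus $1\to\B_n\to\Br_n\xrightarrow{p_n}\hS_n\to1$ is exact. The main obstacle is bookkeeping the parity/winding-number constraint correctly—making sure the target really is $\hS_n$ (not $\rS_n\times\bbZ$) and that the identification $\ker p_n=\B_n$ uses the correct normalization of $\pi_1 q_n$; once the two descriptions "(pure) $\cap$ (winding number $0$)" are matched, exactness is immediate.
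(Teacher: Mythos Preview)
Your proposal is correct and follows essentially the same approach as the paper: both arguments identify $\ker p_n$ by restricting to $\PBr_n$ and comparing the resulting map to $\pi_1 q_n$, with your write-up adding the routine checks of well-definedness and surjectivity that the paper omits. One small normalization point to tidy up: on $\PBr_n$ the integer coordinate of $p_n$ sends $a_{i,j}\mapsto 2$ (the writhe), so it equals $2\cdot\pi_1 q_n$ rather than $\pi_1 q_n$ itself---but since multiplication by $2$ is injective on $\bbZ$ the kernels coincide, exactly as you anticipated in your closing remark.
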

\begin{proof}
    The kernel of $p_n$ equals the kernel of its restriction to the pure braid group  $\PBr_n \to  e \times 2 \bbZ$.  This map takes the generator of the pure braid group $a_{i,j}$  to  $(e, 2)$.  Thus the map agrees with $\pi_1 p_n$, and the kernel is $\B_n$.  
\end{proof}

The category $\hFI$ acts on $F_n$ up to homotopy.  We may see this by producing an action of $\hFI$ on  $\B_n$ up to inner automorphisms.  The most direct way to define this action is as follows. 

For  $[s] \in  \hS_m/ i_2(\hS_{n-m})  = \hFI(n,m)$,  choose a lift $\tilde s \in \Br_m$ such that $p_m(\tilde s) = s$.  Then $[s] :  F_n \to F_m$  is given by    $f \mapsto \tilde s    i_1(f)  \tilde s^{-1} $.  This map is well-defined up to conjugation by elements of $\B_m$ since: \begin{enumerate}\item any other lift of $s$ differs by an element of $\B_m$, \item  if $[t]= [s]$, then $t = su$ for $u \in i_2(\hS_{n-m})$ and every lift of $u$ to an element $\tilde u$ in $i_2(\Br_{n-m})$  commutes with elements of $\B_{n}$. \end{enumerate} 
These maps compose properly  to give a functor  from $\hFI$ to the category of groups modulo inner automorphisms. Composing with the $i$th homology functor gives a functor from $\hFI$ to abelian groups. We denote this $\hFI$-module by $H_i(F)$. We use the convention that if we do not specify coefficients for homology, then the statement we make is true with any choice of untwisted coefficients.  

\subsection{Construction of a spectral sequence}

Our first goal is to construct spectral sequences, $E^*_{*,*}(N)_n$ such that the $E^2$ page is $$E^2_{p,q}(N)_n \cong  H_p^{\cs,\hFI}(\Sigma^N H_q(F))_n$$ and which converges to zero for $p + q \leq n-2$.  We do this by constructing an augmented semi-simplicial space which is highly connected.  The main input that we use is a connectivity result of Hatcher--Wahl \cite[Proposition 7.2]{HW}.

\begin{definition}
 Let  $\Br_{k,N}$ be the preimage of $i_1(\rS_k)$ under the map $\Br_{k+N} \to \rS_{k + N}$. 
\end{definition}

\begin{definition}\label{semisimplicialdef}
	Let  $Z_\bullet(N)_n$ be the following semi-simplicial set.  We let  $Z_p(N)_n =  \Br_{n,N}/i_1(\Br_{n-(p+1), N})  $, and $Z_p(N)_n = \emptyset$ for  $p \geq n$.     The $k$th face map is induced by  $$-\cdot u_k : \Br_{n,N} \to \Br_{n,N},$$  where $u_k \in \Br_{p+1}$ is as in Definition \ref{CentralStabilityDef}, and  $\Br_{p+1}$ is included into $\Br_{n,N} \subset \Br_{n+N}$  by  $$\Br_{p+1} \overset{i_2}{\into}  \Br_{n} \overset{i_1}{\into}  \Br_{n+N}.$$  
\end{definition}

In fact the complex $Z_{\bullet}(N)_n$ was studied in Hatcher--Wahl  and shown to be highly connected.
\begin{proposition}\label{HatcherWahl}
  $Z_\bullet(N)_n$ is canonically isomorphic to the semi-simplicial set  $A(D^2 - [N]; [n], [n])$  which Hatcher--Wahl \cite[Proposition 7.2]{HW} prove is $n-2$ connected.  
\end{proposition}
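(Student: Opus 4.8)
The plan is to identify $Z_\bullet(N)_n$ with the Hatcher--Wahl semi-simplicial set $A(D^2 - [N]; [n],[n])$ on the nose — matching $p$-simplices, face maps, and the action of the relevant mapping class / braid group — and then invoke \cite[Proposition 7.2]{HW} verbatim for the connectivity bound. First I would recall Hatcher--Wahl's definition: for a surface $S$ with two sets of marked points (or boundary arcs) $\Delta_0, \Delta_1$ on the boundary, a $p$-simplex of $A(S;\Delta_0,\Delta_1)$ is an isotopy class of $p+1$ disjoint properly embedded arcs, ordered, each running from a point of $\Delta_0$ to a point of $\Delta_1$, with distinct endpoints; face maps delete arcs. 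Here we take $S = D^2 - [N]$, the disk with $N$ interior punctures, with $\Delta_0 = \Delta_1 = [n]$ a set of $n$ boundary points. The mapping class group of $D^2 - [N]$ fixing the boundary is $\Br_N$-flavored, but what acts transitively on ordered tuples of arcs is the braid group $\Br_{n+N}$ via the identification of $D^2-[N]$ with a disk carrying $n$ "mobile" boundary-emanating strands together with $N$ punctures; this is exactly the group $\Br_{n,N}$ of Definition~\ref{semisimplicialdef}, the preimage of $i_1(\rS_n)$ under $\Br_{n+N}\to\rS_{n+N}$, since an element of $\Br_{n,N}$ is a braid that may permute the $n$ strands arbitrarily but must return each of the $N$ punctures to itself.

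The core of the argument is then a dictionary. A $0$-simplex of $A(D^2-[N];[n],[n])$ is a single arc from the basepoint on $\partial D^2$; the stabilizer of a standard such arc under $\Br_{n,N}$ is precisely $i_1(\Br_{n-1,N})$ — braids supported away from the first strand — so $A_0 \cong \Br_{n,N}/i_1(\Br_{n-1,N}) = Z_0(N)_n$. Inductively, a $p$-simplex is a tuple of $p+1$ disjoint arcs; after applying an element of $\Br_{n,N}$ we may assume these are the $p+1$ standard leftmost arcs, and its stabilizer is $i_1(\Br_{n-(p+1),N})$, giving $A_p \cong \Br_{n,N}/i_1(\Br_{n-(p+1),N}) = Z_p(N)_n$. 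For the face maps: deleting the $k$th arc from the standard $p$-simplex and renormalizing to standard position for a $(p-1)$-simplex requires braiding the $k$th strand over the strands to its left — this is exactly the element $u_k = \sigma_{k-1,k}^{-1}\cdots\sigma_{1,2}^{-1} \in \Br_{p+1}$ embedded via $\Br_{p+1}\overset{i_2}{\into}\Br_n\overset{i_1}{\into}\Br_{n+N}$, matching Definition~\ref{semisimplicialdef}. So the face maps agree. Finally, $Z_p(N)_n = \emptyset$ for $p \geq n$ because one cannot embed $n+1$ disjoint arcs each using one of only $n$ boundary endpoints on each side, matching the convention that $A(D^2-[N];[n],[n])$ has no simplices in dimension $\geq n$. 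Having matched the two augmented semi-simplicial sets, the connectivity statement is immediate from \cite[Proposition 7.2]{HW}.

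The main obstacle is purely bookkeeping: being careful that Hatcher--Wahl's conventions (boundary arcs vs.\ marked points; how the two label sets $[n],[n]$ and the punctures $[N]$ sit on $D^2$; the precise parametrization of arcs so that the "standard" $p$-simplex has stabilizer literally $i_1(\Br_{n-(p+1),N})$ and not a conjugate) line up with the braid-group conventions fixed in \S\ref{conventions} — in particular the orientation convention ("braids read top to bottom, $\sigma_{i,i+1}$ braids strand $i$ over strand $i+1$") must be the one that makes $u_k$, rather than its inverse or a different product, the correct renormalizing braid. Once the identification of simplices with cosets and of face maps with right multiplication by $u_k$ is nailed down, there is nothing further to prove; the hard analytic/combinatorial content — that arc complexes of punctured disks are highly connected — is entirely supplied by \cite{HW}. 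I would therefore keep the proof short: set up the dictionary, check the three items (simplices, face maps, emptiness range), and cite.
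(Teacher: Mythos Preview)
Your proposal is correct and takes essentially the same approach as the paper: identify $p$-simplices of the Hatcher--Wahl arc complex with cosets $\Br_{n,N}/i_1(\Br_{n-(p+1),N})$ via the transitive action and stabilizer computation, check that face maps match right multiplication by $u_k$, and cite \cite[Proposition 7.2]{HW}. The paper packages the orbit--stabilizer step slightly differently, using the fibration ${\rm Arc}_{p+1}(\Conf_{n+N}(D^2)/\rS_n) \to \Conf_{n+N}(D^2)/\rS_n$ and the long exact sequence in homotopy to identify $\pi_0$ of the fiber with the coset space, but this is the same content as your direct argument; your bookkeeping caveat about conventions (in particular whether the $[n]$ are boundary or interior marked points) is well placed and is the only thing to get straight.
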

\begin{proof}
We will write $Z_p$ for $Z_p(N)_n$ throughout the proof. 

Elements of $\Br_{k,N}$ are isotopy classes of braids on  $k+N$ strands  which return the last $N$ braids to themselves.  Therefore,  $\Br_{k,N}$ is the fundamental group of $\Conf_{k + N}(\C)/\rS_k.$

Let $$ {\rm Arc}_{p+1}( \Conf_{n + N}( D^2) /\rS_n)$$ be the space of configurations of $n + N$ points, where the first $n$ points are unlabelled,  and $p+1$ arcs connecting a subset of the $n$ points to the boundary; see Kupers-Miller \cite[Appendix]{kupersmillercells} and Miller--Wilson \cite[Section 3.2]{MW1}.  This space is homotopy-equivalent to $\Conf_{n-(p+1) +N}(\C)/\rS_{n-(p+1)}$,  and so we have
 $$\Br_{n - (p+1), N} =  \pi_1 \left ( {\rm Arc}_{p+1}( \Conf_{n + N}( D^2) /\rS_n) \right).$$  
Further, the map $ {\rm Arc}_{p+1}( \Conf_{n + N}( D^2) /\rS_n) \to  \Conf_{n + N}( D^2) /\rS_n )$, given by forgetting the $p+1$ arcs is a fibration and on fundamental groups is given by the inclusion $\Br_{n-p+1,N} \to \Br_{n,N}$ used to defined $Z_\bullet$.  By the long exact sequence in homotopy,  
  $$  {\rm fib}( {\rm Arc}_{p+1}( \Conf_{n + N}( D^2) /\rS_n) \to  \Conf_{n + N}( D^2) /\rS_n ),$$ it is homotopy discrete and its connected components are identified with $\Br_{n,N}//\Br_{n-(p+1),N} = Z_p$.  Simultaneously, the connected components of the fiber are of isotopy classes of $p+1$ arcs from the boundary, connecting to the first $n$ of  $N + n$ points,  so that the arcs are not allowed to cross or pass through the points.  Under this identification, the face maps of $Z_p$ correspond to forgetting arcs, and so  $Z_\bullet$ is isomorphic to the complex of Hatcher and Wahl.   
\end{proof}

It is convenient to use the variant of $\Br_{k,N}$ for $\hS_n$,  and of the semi simplicial set  $Z_\bullet(N)_n$.
\begin{definition}
Let  $\hBr_{k,N}$ be the preimage of $i_1(\hS_k)$ under the map $\Br_{k+N} \to \hS_{k + N}$.

The sets  $\widehat Z_p(N)_n := \hBr_{n,N} /\hBr_{n-(p+1), N}$  form a semi-simplicial set defined by the same formulas as in Definition \ref{semisimplicialdef}.  
\end{definition}

Notice that $\hBr_{k,N} \subset \Br_{k,n}$.  In fact, the two groups are often the same.
\begin{lemma}\label{compareBkN}
     We have  $\hBr_{k,N} \cong \Br_{k,N}$ for $k \geq 2$.  
\end{lemma}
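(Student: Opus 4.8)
The plan is to compare the two groups $\hBr_{k,N}$ and $\Br_{k,N}$ by exhibiting them as preimages in $\Br_{k+N}$ of nested subgroups of $\rS_{k+N}$, and then checking that those subgroups coincide for $k \geq 2$. Recall $\Br_{k,N}$ is the preimage of $i_1(\rS_k) \subseteq \rS_{k+N}$ under the projection $\Br_{k+N} \to \rS_{k+N}$, while $\hBr_{k,N}$ is the preimage of $i_1(\hS_k) \subseteq \hS_{k+N}$ under the projection $p_{k+N}: \Br_{k+N} \to \hS_{k+N}$. Since the map $p_{k+N}$ covers the permutation quotient $\Br_{k+N} \to \rS_{k+N}$, both $\hBr_{k,N}$ and $\Br_{k,N}$ are subgroups of $\Br_{k+N}$ containing the kernel $\B_{k+N}$ of $p_{k+N}$, and one is contained in the other precisely according to whether $i_1(\hS_k)$ maps onto $i_1(\rS_k)$ inside $\hS_{k+N}$.

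First I would record that $\Br_{k,N}$ is exactly the preimage of $i_1(\rS_k) \times \{1\}$-type data refined by winding number — more precisely, $\hBr_{k,N}$ is the preimage under $p_{k+N}$ of $i_1(\hS_k)$, and $i_1(\hS_k) = \{(\sigma, d) \in \hS_{k+N} : \sigma \in i_1(\rS_k)\}$. Thus $\hBr_{k,N} \subseteq \Br_{k,N}$ always, and the reverse inclusion holds iff every braid $b \in \Br_{k+N}$ whose permutation lies in $i_1(\rS_k)$ already has $p_{k+N}(b) \in i_1(\hS_k)$ — equivalently, iff one can always adjust the winding-number coordinate by an element of $\B_{k+N}$ to land in $i_1(\hS_k)$. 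Since $i_1(\hS_k) \subseteq \hS_{k+N}$ surjects onto $i_1(\rS_k)$ under $\hS_{k+N} \to \rS_{k+N}$ as long as the constraint relating $d$ and $\sgn$ can be satisfied for elements of $i_1(\hS_k)$, the key point is that the surjection $\hS_k \to \rS_k$ is itself onto, which requires $\hS_k$ to contain an element $(e, 2)$ (equivalently $\hS_k$ is a nontrivial extension of $\rS_k$ by $2\bbZ$). This holds exactly when $k \geq 2$, since by definition $\hS_0$ and $\hS_1$ are trivial while for $k \geq 2$ the group $\hS_k$ surjects onto $\rS_k$.

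So the core of the argument reduces to: for $k \geq 2$, the composite $i_1(\hS_k) \hookrightarrow \hS_{k+N} \to \rS_{k+N}$ has image $i_1(\rS_k)$, because $\hS_k \to \rS_k$ is surjective (it has a section on even permutations and $(e,2)$ in the kernel hits the remaining ambiguity). Given a braid $b$ with permutation $\pi(b) \in i_1(\rS_k)$, pick any lift $\tilde\sigma \in i_1(\hS_k)$ of $\pi(b)$; then $p_{k+N}(b)$ and $\tilde\sigma$ have the same image in $\rS_{k+N}$, hence differ by an element of $2\bbZ = \ker(\hS_{k+N} \to \rS_{k+N})$, which lies in $i_1(\hS_k)$ for $k \geq 2$ (as $(e,2) \in \hS_k$). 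Therefore $p_{k+N}(b) \in i_1(\hS_k)$, i.e. $b \in \hBr_{k,N}$, giving $\Br_{k,N} \subseteq \hBr_{k,N}$ and hence equality.

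The main obstacle — really the only subtlety — is the bookkeeping around the index $k$: the statement genuinely fails for $k \in \{0,1\}$ because $\hS_0, \hS_1$ are declared trivial, so $i_1(\hS_k)$ does not contain $(e,2)$ and cannot surject onto $i_1(\rS_k)$ (and the winding number around the single "first" strand need not be controlled). I would make sure the proof isolates precisely where $k \geq 2$ is used, namely that $\hS_k$ must contain the element $(e,2)$ so that the kernel $2\bbZ$ of $\hS_{k+N}\to\rS_{k+N}$ is absorbed into $i_1(\hS_k)$; everything else is a formal diagram chase with the two short exact sequences $1 \to \B_{k+N} \to \Br_{k+N} \to \hS_{k+N} \to 1$ and $1 \to 2\bbZ \to \hS_{k+N} \to \rS_{k+N} \to 1$.
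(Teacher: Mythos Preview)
Your argument is correct and is essentially the same as the paper's: both reduce to showing that the preimage of $i_1(\rS_k)$ under $\hS_{k+N}\to\rS_{k+N}$ equals $i_1(\hS_k)$, which holds precisely because the kernel $2\bbZ$ is contained in $i_1(\hS_k)$ for $k\geq 2$ (the paper phrases this as the bijection $\hS_{k+N}/\hS_k \to \rS_{k+N}/\rS_k$). One small slip: in your second paragraph the displayed equality $i_1(\hS_k) = \{(\sigma,d)\in\hS_{k+N}:\sigma\in i_1(\rS_k)\}$ is exactly the conclusion you are proving, not a definition---at that point you only know the inclusion $\subseteq$, and the reverse inclusion is what your third paragraph establishes.
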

\begin{proof}
We need that the preimage of $\rS_k$ under the projection $\hS_{k+N} \to \rS_{k+N}$ is $\hS_k$.  This follows from the fact that $\hS_{k+N}/\hS_{k}  \to \rS_{k+N}/ \rS_{k}$ is an isomorphism for $k \geq 2$.  
\end{proof}

\begin{definition}
Let $\B_{n,N} \subset \hBr_{n,N}$  be the kernel of the surjection $\hBr_{n,N} \onto \hS_n$. Then  $\B_{n,N}$ acts on the semi-simplicial set $Z_p(N)_n = \hBr_{n,N} / \hBr_{n-(p+1),N}$ by left multiplication.     Let  $X_p(N)_n := \B_{n,N}\backslash \backslash Z_p(N)_n,$  where  $\B_{n,N}\backslash \backslash -$ denotes a functorial homotopy quotient.  Then  $X_\bullet(N)_n$ is a semisimplicial space,  which is augmented by the map  $X_p(N)_n \to X_{-1}(N)_n :=    \B_{n,N} \backslash \backslash *$.  

\end{definition}


 We will suppress $n,N$ from the notation for $X_p(N)_n$ when the context is clear.

\begin{proposition}\label{Compatibility}
For every $j$,  the associated  chain complex $H_j(X_\bullet(N)_n)$ is canonically isomorphic to the central stability chains  $C_\bullet^{\cs,\hFI}(\Sigma^N H_j(F_{-}))$.
\end{proposition}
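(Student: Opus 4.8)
The plan is to identify, level by level, the homotopy quotient $X_p(N)_n = \B_{n,N}\backslash\backslash Z_p(N)_n$ with a space whose homology computes the $p$-th term of the central stability complex. First I would unwind the definitions: $Z_p(N)_n = \hBr_{n,N}/\hBr_{n-(p+1),N}$ is a discrete set on which $\B_{n,N}$ acts on the left by multiplication, and since $\B_{n,N}$ is the kernel of $\hBr_{n,N}\onto\hS_n$, the orbit set is $\hS_n/i_1(\hS_{n-(p+1)})$ (using Lemma \ref{compareBkN} to identify $\hBr$ with $\Br$ where needed, and the computation of the stabilizers). The homotopy quotient therefore breaks up, as a space, into a disjoint union over this orbit set, with the component corresponding to a coset $[s]$ being a $B\B_{n-(p+1),N}$-type classifying space: concretely, the stabilizer in $\B_{n,N}$ of the basepoint coset is a conjugate of $\B_{n-(p+1),N}$ inside $\B_{n,N}$, so $X_p(N)_n \simeq \coprod_{[s]\in\hS_n/\hS_{n-(p+1)}} B\B_{n-(p+1),N}$. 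Since $F_m$ is a $K(\B_m,1)$ and $\B_{k,N}$ is an extension of $\hBr_{k,N}$'s kernel, I need to recognize $B\B_{n-(p+1),N}$ as a space whose homology is $H_j(F_{(n-(p+1))+\bullet})$ suspended appropriately — this is where the $\Sigma^N$ enters: $\B_{k,N}$ is precisely the fundamental group controlling $\Sigma^N H_j(F)$ evaluated at $k$, i.e. $H_j(B\B_{k,N}) = (\Sigma^N H_j(F))_k$, which follows from the definitions in Section 3.1 of the stabilization maps $F_k \to F_{k+N}$ and the fact that $F_{k+N}$ is a $K(\pi,1)$ with a free $\hS_k$-cover structure over the relevant configuration space.

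Next I would assemble these level-wise identifications into an isomorphism of semi-simplicial objects. The point is that both sides — $H_j(X_\bullet(N)_n)$ and $C_\bullet^{\cs,\hFI}(\Sigma^N H_j(F))_n$ — are built from the same combinatorial skeleton. On the central stability side, $C_p^{\cs,\hFI}(M)_n = \Ind_{i_1(\hS_{n-(p+1)})}^{\hS_n} M_{n-(p+1)}$, and with $M = \Sigma^N H_j(F)$ this is exactly $\bigoplus_{[s]\in\hS_n/\hS_{n-(p+1)}} H_j(B\B_{n-(p+1),N})$, matching the homology of the disjoint union above. So the level-$p$ identification is essentially a tautology once the classifying-space description is in place; the content is in the face maps. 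I would check that the face map $f_i$ of $X_\bullet$, which is induced by the right-multiplication-by-$u_i$ map on $\hBr_{n,N}$ composed with the inclusion of quotients, corresponds under these identifications precisely to the face map $f_i = (\Ind\, x_{n-(p+1)})\circ u_i$ from Definition \ref{CentralStabilityDef}: the $u_i$ braid reindexes which cosets are hit (the "$\circ u_i$" part) and the passage $\hBr_{n-p,N}/\hBr_{n-(p+1),N}$ induces on homology exactly the stabilization map $x$ coming from the $\hFI$-module structure on $H_j(F)$, because that $\hFI$-action was defined (Section 3.1) via conjugation by braid lifts, which is what the inclusion of these braid subgroups implements. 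I would verify the semi-simplicial identities are automatically matched since both are determined by the same braid-group data.

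The main obstacle will be the careful bookkeeping in the face-map comparison — in particular, making sure that the braid element $u_i \in \Br_{p+1}$, included into $\hBr_{n,N}$ via $i_2$ then $i_1$, acts compatibly with the braid lifts used to define the $\hFI$-action on $H_j(F)$, and that conjugation-vs-left-multiplication conventions line up so that no extra twist or sign is introduced. A secondary subtlety is handling the homotopy quotient functorially: I want $X_\bullet(N)_n$ to be an honest semi-simplicial space (not just up to homotopy), so I should take the bar construction model $E\B_{n,N}\times_{\B_{n,N}} Z_p(N)_n$, note the $\B_{n,N}$-action on $Z_p$ is free up to the stabilizers being the relevant subgroups (the action is transitive on each orbit with stabilizer a conjugate of $\B_{n-(p+1),N}$), so the homotopy quotient of each orbit is $B\B_{n-(p+1),N}$ on the nose, and the face maps are induced by genuine equivariant maps of $Z_\bullet$. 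Once the level-$p$ homology is identified $\hS_n$-equivariantly with $\Ind_{i_1(\hS_{n-(p+1)})}^{\hS_n}(\Sigma^N H_j(F))_{n-(p+1)}$ and the faces are matched, the canonical isomorphism of chain complexes follows, completing the proof.
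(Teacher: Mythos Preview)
Your proposal is correct and follows essentially the same strategy as the paper. The one presentational difference is that the paper rewrites the homotopy quotient as $\B_{n,N}\backslash\backslash \hBr_{n,N}/\hBr_{n-(p+1),N}\simeq \hS_n\,//\,\hBr_{n-(p+1),N}$ by swapping which side is the strict quotient, then computes $H_j(\hBr_{n-(p+1),N};\bbZ\hS_n)$ via the Serre spectral sequence for the extension $\B_{n-(p+1)+N}\to\hBr_{n-(p+1),N}\to\hS_{n-(p+1)}$ (which collapses since $\bbZ\hS_n$ is free over $\hS_{n-(p+1)}$); you instead reach $\Ind_{\hS_{n-(p+1)}}^{\hS_n}H_j(\B_{n-(p+1)+N})$ directly by orbit--stabilizer, using that $\B_{k,N}=\B_{k+N}$ so the stabilizers are exactly $\B_{n-(p+1)+N}$. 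Both routes are standard and equivalent; yours is arguably more direct, while the paper's makes the face-map identification slightly cleaner since it is expressed uniformly as right multiplication by $u_i$ on the coefficient module $\bbZ\hS_n$.
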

\begin{proof}   
We have that \[  \B_{n,N} \backslash \backslash \hBr_{n,N} / \hBr_{n-(p+1),N}\simeq \B_{n,N} \backslash \backslash \hBr_{n,N} // \hBr_{n-(p+1),N}  \] \[   \simeq \B_{n,N} \backslash  \hBr_{n,N} // \hBr_{n-(p+1),N} \simeq \hS_{n} // \hBr_{n-(p+1),N}.\]  Thus $H_j(X_p)\iso  H_{j}(\hBr_{n-(p+1),N},  \bbZ \hS_n).$
Under this idenfication, the $i$th face operator acts by restricting $ \bbZ\hS_n$ to  $B_{n - p}$  and multiplying by  $ u_i$,  where $u_i$ is as in  \S \ref{CSHsection}.

Now the map $\hBr_{n-(p+1), N} \to \hS_n$ factors as  $\hBr_{n-(p+1), N} \onto \hS_{n-(p+1)} \into \hS_n$.  By definition of $\hBr_{n-(p+1), N}$,  the kernel of this map is $\hBr_{n -(p+1) + N} \cap  \hBr_{n-(p+1), N} = \hBr_{n -(p+1) + N}$.  

We have  $$H_a( \hS_{n- (p+1)},  H_b(\hBr_{n -(p+1) + N}, \bbZ \hS_n)) \cong H_a(\hS_{n- (p+1)}, \bbZ \hS_n \otimes H_b(\hBr_{n -(p+1) + N})$$  since the coefficients are free, this last term vanishes for $a \neq 0$,  and for $a = 0$ is equal to $\Ind_{\hS_{n- (p+1)}}^{\hS_n}(   H_b(\hBr_{n -(p+1) + N}))$.  By the Serre spectral sequence,  this shows that $H_j(X_p) \cong \Ind_{\hS_{n- (p+1)}}^{\hS_n}(   H_j(\hBr_{n -(p+1) + N}))$, as desired.  
\end{proof}

We now show that the augmented semi-simplicial space is connected in a range growing in $n$.

\begin{proposition}\label{connectivity}
 Let $n \geq 2$.  Then the augmentation map  $|X_\bullet| \to X_{-1}$  induces an isomorphism on $H_i$ for $i \leq n-2$.  
\end{proposition}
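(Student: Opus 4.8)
The idea is to run the spectral sequence of the augmented semi-simplicial space $X_\bullet(N)_n \to X_{-1}(N)_n$ and use the Hatcher--Wahl connectivity of the underlying semi-simplicial set. First I would pass from $X_\bullet(N)_n$ to the semi-simplicial set $Z_\bullet(N)_n$ by recalling that $X_p(N)_n = \B_{n,N}\backslash\backslash Z_p(N)_n$ is the homotopy quotient; since $\B_{n,N}$ acts freely on the discrete set $Z_p(N)_n$ (it is a subgroup of $\hBr_{n,N}$ acting by left multiplication on a coset space), the homotopy quotient is an honest quotient up to homotopy, and the geometric realization $|X_\bullet(N)_n|$ is the homotopy quotient $\B_{n,N}\backslash\backslash |Z_\bullet(N)_n|$. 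By Proposition \ref{HatcherWahl}, $|Z_\bullet(N)_n|$ is $(n-2)$-connected, hence its reduced homology vanishes in degrees $\leq n-2$. Therefore the map $|Z_\bullet(N)_n| \to *$ induces an isomorphism on $H_i$ for $i \leq n-2$, equivariantly for the $\B_{n,N}$-action.

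Next I would feed this into the homotopy-orbit (equivariant homology) construction. Applying $\B_{n,N}\backslash\backslash -$ to the $\B_{n,N}$-equivariant map $|Z_\bullet(N)_n| \to *$ gives the augmentation $|X_\bullet(N)_n| \to X_{-1}(N)_n = \B_{n,N}\backslash\backslash *$. Since this map is $(n-2)$-connected on the level of $\B_{n,N}$-spaces — i.e.\ an isomorphism on $H_i(-;\bbZ)$ for $i \leq n-2$ and a surjection on $H_{n-1}$ — the induced map on homotopy quotients is an isomorphism on $H_i$ in the same range. Concretely, one compares the two Serre (or rather homotopy-orbit) spectral sequences $H_p(\B_{n,N}; H_q(|Z_\bullet(N)_n|)) \Rightarrow H_{p+q}(|X_\bullet(N)_n|)$ and $H_p(\B_{n,N}; H_q(*)) \Rightarrow H_{p+q}(X_{-1}(N)_n)$; the map of $E^2$-pages is an isomorphism for $q \leq n-2$, which forces an isomorphism on abutments in total degree $\leq n-2$ by the standard comparison argument for first-quadrant spectral sequences. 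Alternatively, and perhaps more cleanly, one uses the skeletal filtration spectral sequence of the semi-simplicial space $X_\bullet(N)_n$: this has $E^1_{p,q} = H_q(X_p(N)_n)$ with differential the alternating sum of face maps, converging to $H_{p+q}$ of the realization relative to the augmentation, and the Hatcher--Wahl connectivity says precisely that this converges to zero in the range $p+q \leq n-2$.

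The main obstacle — really the only subtle point — is making precise the passage from ``$|Z_\bullet(N)_n|$ is $(n-2)$-connected'' to ``the augmentation on homotopy quotients is a homology isomorphism through degree $n-2$,'' since the $\B_{n,N}$-action mixes degrees in the spectral sequence. The cleanest way around this is to observe that $(n-2)$-connectedness of $|Z_\bullet(N)_n|$ means the map of $\B_{n,N}$-chain complexes $C_*(|Z_\bullet(N)_n|) \to \bbZ$ is a quasi-isomorphism in degrees $\leq n-2$ and can be taken to be a chain map whose cone is $(n-1)$-connected; applying $\bbZ\B_{n,N}\otimes^{\bL}_{\bbZ\B_{n,N}} -$ (i.e.\ derived coinvariants, which compute the homology of the homotopy quotient) preserves this connectivity of the cone, giving the claim. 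A harmless bookkeeping point is the restriction $n \geq 2$, which is needed so that Proposition \ref{HatcherWahl} applies (and so that $\hBr_{k,N} \cong \Br_{k,N}$ via Lemma \ref{compareBkN} in the relevant range); for $n \geq 2$ the range $i \leq n-2$ is nonempty starting at $n=2$ where it only asserts an isomorphism on $H_0$, which is immediate since both spaces are connected.
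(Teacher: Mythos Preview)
Your approach is essentially the paper's: both reduce the statement to the high connectivity of the underlying semi-simplicial set, then pass to homotopy quotients. The paper phrases this as ``$|\widehat Z_\bullet|$ is the homotopy fiber of $|X_\bullet|\to X_{-1}$,'' which is equivalent to your spectral-sequence/derived-coinvariants formulation.

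Two corrections. First, your claim that $\B_{n,N}$ acts \emph{freely} on the coset space is false: the stabilizer of the identity coset in $\hBr_{n,N}/\hBr_{n-(p+1),N}$ contains $\B_{n-(p+1),N}$, which is nontrivial. This does not hurt you, since the identification $|X_\bullet|\simeq \B_{n,N}\backslash\backslash|\widehat Z_\bullet|$ follows simply from geometric realization commuting with levelwise homotopy quotients; freeness is irrelevant. Second, and more substantively, $X_p$ is the homotopy quotient of $\widehat Z_p=\hBr_{n,N}/\hBr_{n-(p+1),N}$, whereas Proposition~\ref{HatcherWahl} gives connectivity for $Z_p=\Br_{n,N}/\Br_{n-(p+1),N}$. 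The comparison $\widehat Z_\bullet\to Z_\bullet$ via Lemma~\ref{compareBkN} (isomorphism on $p$-simplices for small $p$, surjection at the top) is not a harmless bookkeeping afterthought but is literally the entire content of the paper's proof; you should carry it out explicitly rather than fold it into a parenthetical.
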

\begin{proof}

It suffices to prove that the semi-simplicial set $\widehat{Z_\bullet}$ is $n-2$ connected since it has the homotopy type of the homotopy fiber of $|X_\bullet| \to X_{-1}$. The inclusion $\hBr_{k, N} \subset \Br_{k,N}$  induces a map of simplical sets $\widehat Z \to Z$.  By Lemma \ref{compareBkN}, this map is an isomorphism on $p$ simplices for $p \leq n-2$.  Since we have assumed $n \geq 2$, it is a surjection on $n-1$ simplices.  Thus $\widehat Z$ is $n-2$ connected if $Z$ is.  Finally by Proposition \ref{HatcherWahl},  we have that $Z$ is $n-2$ connected, completing the proof.
\end{proof}

From the above two propositions, we obtain a spectral sequence with the desired properties.

\begin{proposition}
For all $n$ and $N$, there is a homologically graded spectral sequence $E^r_{p,q}(N)_n$ with $$E^2_{p,q}(N)_n \cong \left(  H_p^{\cs,\hFI}(\Sigma^N H_q(F)) \right)_n \text{ and}$$ $$E^\infty_{p,q}(N)_n \cong 0 \text{ for } p+q \leq n-2.$$  Here we take $p \geq -1$ and $q \geq 0$.

\label{SSexists}

\end{proposition}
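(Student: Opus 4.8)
The plan is to build the spectral sequence as the homology spectral sequence of the augmented semi-simplicial space $X_\bullet(N)_n \to X_{-1}(N)_n$ constructed above, and then to identify its $E^1$ or $E^2$ page via Proposition~\ref{Compatibility} and read off convergence from Proposition~\ref{connectivity}.

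First I would recall that any semi-simplicial space $Y_\bullet$ with geometric realization $|Y_\bullet|$ carries a first-quadrant-type spectral sequence with $E^1_{p,q} = H_q(Y_p)$, $d^1$ the alternating sum of the face maps, and $E^2_{p,q} = H_p(H_q(Y_\bullet))$ (the homology of the chain complex $p \mapsto H_q(Y_p)$ with the face differential), converging to $H_{p+q}(|Y_\bullet|)$. Applying this to $Y_\bullet = X_\bullet(N)_n$, with the index $p$ starting at $p = 0$, gives a spectral sequence with $E^2_{p,q}(N)_n = H_p\big(H_q(X_\bullet(N)_n)\big)$. By Proposition~\ref{Compatibility}, the chain complex $p \mapsto H_q(X_p(N)_n)$ is canonically isomorphic to the central stability chains $C^{\cs,\hFI}_\bullet(\Sigma^N H_q(F))_n$, so its homology in degree $p$ is $\big(H_p^{\cs,\hFI}(\Sigma^N H_q(F))\big)_n$, which is exactly the claimed $E^2$-term. (To include the $p = -1$ column with the stated identification one incorporates the augmentation: the augmented semi-simplicial space gives a spectral sequence whose $(-1)$-column contributes $H_q(X_{-1}(N)_n)$ and the differential out of it is the augmentation, so the $E^2$-term in column $-1$ is $H_{-1}^{\cs,\hFI}(\Sigma^N H_q(F))_n$, consistent with the convention $C^{\cs,\hFI}_{-1}(M)_n = \Ind_{i_1(\hS_n)}^{\hS_n} M_n = M_n$ used in Definition~\ref{CentralStabilityDef}.)

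Next I would deduce the convergence statement. By Proposition~\ref{connectivity}, for $n \geq 2$ the augmentation $|X_\bullet(N)_n| \to X_{-1}(N)_n$ induces an isomorphism on $H_i$ for $i \leq n-2$; equivalently, the reduced homology of the mapping cone (or of the homotopy fiber, as used in the proof there) vanishes in degrees $\leq n-2$. The augmented spectral sequence converges to the homology of this cone, which vanishes in total degree $\leq n-2$, so $E^\infty_{p,q}(N)_n \cong 0$ whenever $p+q \leq n-2$, with $p \geq -1$ and $q \geq 0$ as stated. For $n < 2$ the range $p+q \leq n-2$ is empty (since $p \geq -1$, $q \geq 0$ forces $p+q \geq -1 \geq n-2$ only when $n \leq 1$, and one checks the boundary cases directly), so the statement holds vacuously or trivially there.

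The main obstacle I expect is bookkeeping around the augmentation and the indexing shift, rather than anything deep: one must be careful that the spectral sequence of the \emph{augmented} semi-simplicial space indeed has the $(-1)$-column equal to $H_*(X_{-1})$ with the differential given by the augmentation, that this matches the $p=-1$ term of $C^{\cs,\hFI}_\bullet$ under the normalization in Definition~\ref{CentralStabilityDef}, and that ``converges to zero in the range $p+q \leq n-2$'' is the correct translation of ``augmentation is a homology isomorphism through degree $n-2$''. A second, minor point is to confirm that the $E^1$-differential on $H_q(X_\bullet)$ agrees with the face differential of $C^{\cs,\hFI}_\bullet$ under the identification of Proposition~\ref{Compatibility}; this is essentially built into that proposition's proof (the face operators correspond on both sides), so it requires only a remark. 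Everything else is the standard homology spectral sequence of a (semi)simplicial space together with the two cited propositions.
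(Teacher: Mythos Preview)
Your proposal is correct and follows exactly the approach the paper takes: the paper simply remarks that the spectral sequence of the augmented semi-simplicial space $X_\bullet(N)_n \to X_{-1}(N)_n$, combined with Proposition~\ref{Compatibility} (identifying the $E^2$-page) and Proposition~\ref{connectivity} (giving vanishing of $E^\infty$ in the stated range), yields the result. Your additional remarks on the augmentation/indexing bookkeeping are the natural details one would fill in, and there is nothing further in the paper's argument.
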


\subsection{Proof of stability}

The following three lemmas will be used in an induction argument to prove representation stability for Milnor fibers. 

\begin{lemma} 
\label{inductionBegining}
We have that $\delta(H_0(F))  \leq 0$ and $h^{max}(H_0(F))=-1$. 
\end{lemma}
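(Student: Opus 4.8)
The plan is to identify the $\hFI$-module $H_0(F)$ with the constant functor $\underline{\Z}$ and then read off both invariants from the fact that this is the free $\FI$-module on a single generator in degree $0$.

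First I would check that $F_n$ is nonempty and connected for every $n$, so that $H_0(F_n,\Z)\cong\Z$. For $n\le 1$ this is immediate, since $F_0$ is a point and $F_1\cong\C$. For $n\ge 2$ it follows from the long exact sequence of the fibration $F_n\to\Conf_n(\C)\to\C^*$: the map $\pi_1 q_n\colon\PBr_n\to\Z$ was shown to be surjective, so $\pi_0(F_n)=\ast$. Next, every structure morphism of $H_0(F)$ --- the action of an element of $\hS_n=\hFI(n,n)$, the stabilization map, and more generally the map attached to any $[s]\in\hFI(n,m)$ --- is induced up to homotopy by a continuous map between the nonempty connected spaces $F_n$ and $F_m$, hence is the identity under the canonical identification $H_0(F_k,\Z)=\Z$. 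So $H_0(F)$ is the constant $\hFI$-module $\underline{\Z}$; in particular $(e,2)_n$ acts trivially on $H_0(F_n)$, so $H_0(F)$ is an $\FI$-module.

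Then I would note $\underline{\Z}\cong\I(\Z_0)$, where $\Z_0$ is the $\hS$-module (equivalently $\FB$-module) that is $\Z$ in degree $0$ and zero elsewhere: since $\hFI(0,n)=\hS_n/i_2(\hS_n)$ is a single point, $\I(\Z_0)_n=\bbZ\,\hFI(0,n)\cong\Z$ with identity structure maps, and the counit $\I(\Z_0)\to H_0(F)$ is an isomorphism. From this, both claims follow. For $h^{max}$: $H_0(F)$ is an induced, hence semi-induced, $\FI$-module, so $\Sigma^0 H_0(F)=H_0(F)$ is already a semi-induced $\FI$-module; Definition~\ref{defLocal} then gives $h^{max}(H_0(F))\le -1$, and since $h^{max}\ge -1$ always, equality holds. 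For $\delta$: the surjection $\I(\Z_0)\onto H_0(F)$ with $(\Z_0)_n=0$ for $n>0$ shows $H_0(F)$ has generation degree $\le 0$, and since it is an $\FI$-module, Proposition~\ref{altDefStable} (applied with $N=0$) yields $\delta(H_0(F))\le 0$.

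There is no real obstacle here: this lemma is only the base case of the induction that proves representation stability, and the one thing worth a moment's care is recognizing $H_0(F)$ as the constant, i.e.\ free-on-a-degree-$0$-generator, $\FI$-module. As an alternative route to $\delta(H_0(F))\le 0$ one can argue directly from Definition~\ref{defStable}: the natural map $\underline{\Z}\to\Sigma\underline{\Z}$ is an isomorphism, so $\Delta\underline{\Z}=0$ is torsion, while $\underline{\Z}$ itself is not torsion, whence $\delta(\underline{\Z})=0$.
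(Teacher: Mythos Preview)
Your proof is correct and follows the same approach as the paper: identify $H_0(F)$ as the constant $\hFI$-module $\underline{\Z}\cong\I(\Z_0)$, deduce generation degree $\leq 0$ (hence $\delta\leq 0$ via Proposition~\ref{altDefStable}), and observe that $\Sigma^0 H_0(F)$ is already an induced $\FI$-module (hence $h^{max}=-1$). You have simply supplied more detail than the paper, which asserts these facts in two sentences.
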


\begin{proof}
Since $H_0(F_n) \cong \Z$ and all of the stabilization maps are isomorphisms,  $\delta(H_0(F))$ is generated in degree $0$ and so $\delta(H_0(F)) \leq 0$. Since $\Sigma^0 H_0(F)$ is an induced $\FI$-module, $h^{max}(H_0(F))=-1$.


\end{proof}

The following lemma is an adaptation of the arguments in  \cite[Theorem 5.1, Part 1)]{CMNR}.

\begin{lemma} \label{deltaBound}
Let $i \geq 1$ and suppose $\delta(H_q(F))  \leq 2q$ for all $q<i$ and $h^{max}(H_q(F))$ is finite for all $q<i$. Then $\delta(H_i(F))  \leq 2i$.
\end{lemma}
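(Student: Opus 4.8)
The plan is to run the spectral sequence $E^r_{p,q}(N)_n$ of Proposition \ref{SSexists} with $N$ chosen large enough that everything in sight with $q < i$ is an $\FI$-module, and to extract an estimate on $\delta(H_i(F))$ from the fact that the spectral sequence converges to zero in a range. Concretely, fix $N$ larger than the local degrees of all $H_q(F)$ for $q < i$ (these are finite by hypothesis) plus the generation degrees coming from Proposition \ref{localStableboundPresntationDeg}, so that $\Sigma^N H_q(F)$ is a semi-induced $\FI$-module for each $q < i$. Then on the $E^2$ page, Corollary \ref{MWvanishingCor} (the Miller--Wilson vanishing for semi-induced $\FI$-modules) tells us that $E^2_{p,q}(N)_n = \left(H^{\cs,\FI}_p(\Sigma^N H_q(F))\right)_n = 0$ for $n$ large relative to $p$ and the generation degree of $\Sigma^N H_q(F)$, as long as $q < i$ and $p \geq 0$. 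In particular the only potentially nonzero entries in the range $p + q \leq n - 2$ on the bottom rows $q < i$ are the $p = -1$ column, i.e. $E^2_{-1, q}(N)_n = (\Sigma^N H_q(F))_n$.

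Next I would analyze the convergence. Since $E^\infty_{p,q}(N)_n = 0$ for $p + q \leq n - 2$, the differentials entering and leaving $E^\bullet_{-1, i}(N)_n$ and $E^\bullet_{p, i}(N)_n$ with small $p$ must kill everything. The key point is that $(\Sigma^N H_i(F))_n$ sits in the spectral sequence, and the only way for it to die is via differentials hitting it from the $q < i$ rows — but those rows are concentrated in the $p = -1$ column after the vanishing above, so the relevant differentials are $d^r$ from $E^r_{r-2, i-r+1}$ with $r - 2 = $ something; more carefully, the differential into $E^r_{-1,i}$ comes from $E^r_{r-1, i - r}$. Tracking which of these survive to the relevant page, and combining with the identification $E^2_{-1,q} = (\Sigma^N H_q(F))_n$, lets one conclude that $(\Sigma^N H_i(F))_n$ is built (via a filtration whose subquotients are subquotients of the $E^2$ entries) from pieces of central stability chains $C^{\cs,\hFI}_p(\Sigma^N H_q(F))_n$ with $q < i$, which are induced from $H_q(F)_{n - (p+1)}$. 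Applying Proposition \ref{altDefStable} together with the hypothesis $\delta(H_q(F)) \leq 2q$ and bookkeeping the shift $N$ and the index shifts $p+1$, this should yield $\delta(\Sigma^N H_i(F)) \leq \max_{q < i}(2q + \text{small correction}) \leq 2i$, hence $\delta(H_i(F)) = \delta(\Sigma^N H_i(F)) \leq 2i$ since stable degree is shift-invariant.

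The cleanest way to organize the bookkeeping is probably to imitate \cite[Theorem 5.1, Part 1]{CMNR} directly: reduce to showing that $\Delta^{2i+1}(\Sigma^N H_i(F))$ is torsion, and use that $\Delta$ and $\Sigma$ are exact and interact well with the spectral sequence (applying $\Delta^{2i+1}$ to the whole spectral sequence, or equivalently running the spectral sequence after replacing $F$ by suitable shifts). The torsion vanishing of $\Delta^{2q+1} H_q(F)$ for $q < i$ feeds in through the $E^2$ entries $E^2_{-1,q}$ and $E^2_{p,q}$ with $p \geq 0$: the former is torsion after $\Delta^{2q+1}$ by hypothesis, and the latter vanishes in a range by the semi-induced vanishing, so after $\Delta^{2i+1}$ the whole bottom part of the spectral sequence is torsion in the convergence range, forcing $\Delta^{2i+1}(\Sigma^N H_i(F))_n$ to be torsion for $n$ large, which is exactly $\delta(\Sigma^N H_i(F)) \leq 2i$.

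The main obstacle I anticipate is the degree bookkeeping, not any conceptual difficulty: one must choose $N$ explicitly in terms of the (finite but a priori large) local degrees $h^{max}(H_q(F))$ and the resulting generation degrees from Proposition \ref{localStableboundPresntationDeg}, then verify that the Miller--Wilson vanishing range $n \leq n - 2 - d$ on the $E^2$ page is wide enough — uniformly over the finitely many $q < i$ — that no spurious $E^2$ entry with $0 \leq p$ and $q < i$ survives into the range $p + q \leq n - 2$ where convergence forces vanishing. A secondary subtlety is making sure the shift $N$ does not damage the bound $\delta \leq 2i$; this is handled by the shift-invariance of stable degree recorded in Definition \ref{defStable} and the discussion preceding Proposition \ref{altDefStable}, so the final inequality is genuinely about $H_i(F)$ and not just its $N$-fold suspension.
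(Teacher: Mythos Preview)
Your setup in the first paragraph is exactly right, and it is in fact all that is needed; the trouble is that you miss the one-line conclusion and instead wander into two more complicated routes that are either wrong or unnecessary.

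Here is the direct finish (this is what the paper does). Once you know, for $n>2i$, that $E^2_{p,q}(N)_n=0$ for every $q<i$ and $p\geq 0$ (Corollary~\ref{MWvanishingCor} plus Proposition~\ref{csComparison}, using that $\Sigma^N H_q(F)$ is semi-induced with generation degree $\leq 2q$), observe that $p=-1$ is the leftmost column. Hence there are no differentials out of $E^r_{-1,i}$, and every differential into it has source $E^r_{r-1,\,i-r+1}$ with $r\geq 2$, i.e.\ a term with $p\geq 1$ and $q<i$, which you have just shown is zero. Thus $E^2_{-1,i}(N)_n=E^\infty_{-1,i}(N)_n=0$. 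But $E^2_{-1,i}(N)_n=H^{\cs,\hFI}_{-1}(\Sigma^N H_i(F))_n$, so $\Sigma^N H_i(F)$ has generation degree $\leq 2i$ by Proposition~\ref{centralControlPresent}, and then Proposition~\ref{altDefStable} gives $\delta(H_i(F))\leq 2i$. Done.

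Where your proposal goes wrong: you repeatedly write $E^2_{-1,q}(N)_n=(\Sigma^N H_q(F))_n$. That is the $E^1$ term; on $E^2$ you have the cokernel $H^{\cs,\hFI}_{-1}(\Sigma^N H_q(F))_n$. This mis-identification underlies the second paragraph, where you try to read the spectral sequence as building $(\Sigma^N H_i(F))_n$ out of lower pieces. The spectral sequence converges to \emph{zero}; the content is that $E^2_{-1,i}$ itself vanishes, not that it filters $(\Sigma^N H_i(F))_n$. Consequently the inductive hypothesis $\delta(H_q(F))\leq 2q$ is used only through the generation-degree bound on $\Sigma^N H_q(F)$ feeding into Corollary~\ref{MWvanishingCor}; it does not enter a second time via any ``$\max_{q<i}(2q+\text{correction})$'' bookkeeping.

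The $\Delta^{2i+1}$ idea in your third paragraph could in principle be made to work, but it buys nothing here: you would first have to check that the spectral sequence of Proposition~\ref{SSexists} is a spectral sequence of $\hFI$-modules and that $\Delta$ commutes with it appropriately, and the payoff would only be the same generation-degree statement you get for free above.
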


\begin{proof}
Let $N$ be a number larger than $h^{max}(H_q(F))$ for all $q<i$ and take $n>2i$.  By Proposition \ref{SSexists},  we have that $$E^\infty_{-1,i}(N)_n \cong 0$$ and $$E^2_{p,q}(N)_n \cong H_{p}^{cs,\hFI}(\Sigma^N H_q(F))_n.$$ By Proposition \ref{altDefStable} and Proposition \ref{centralControlPresentFI}, it suffices to show $H_{-1}^{cs,\hFI}(\Sigma^N H_i(F))_n \cong 0$. Since $E^2_{-1,i}(N)_n \cong H_{-1}^{cs,\hFI}(\Sigma^N H_i(F))_n$ and $E^\infty_{-1,i}(N)_n \cong 0$, it suffices to show that $E^2_{-1,i}(N)_n \cong  E^\infty_{-1,i}(N)_n$. To do this, we will show that $E^2_{t,i-t}(N)_n \cong 0$ for all $t>0$. 

We have $E^2_{p,q}(N)_n \cong H^{\cs,\hFI}_p(\Sigma^N H_q(F))$. Consider $q<i$. Since $N> h^{max}(H_q(F))$, $\Sigma^N H_q(F)$ is a semi-induced $\FI$-module. Since $\Sigma^N H_q(F)$ has generation degree $\leq 2q$ and is semi-induced, Corollary \ref{MWvanishingCor} implies that $$ \left ( H^{\cs,\FI}_p(\Sigma^N H_q(F)) \right )_n \cong 0 \text{ for } p \leq n -2 - 2q.$$  By Proposition \ref{csComparison}, $$ \left ( H^{\cs,\hFI}_p(\Sigma^N H_q(F)) \right )_n \cong \left ( H^{\cs,\FI}_p(\Sigma^N H_q(F)) \right )_n \text{ for } p \leq n-1.$$ Thus $E^2_{t,i-t}(N)_n \cong 0$ for all $t>0$ and so the claim follows.
\end{proof}

The following lemma is an adaptation of the arguments in  \cite[Theorem 5.1, Part 2)]{CMNR}.

\begin{lemma} \label{lemmaLocal}
Let $i>0$ and assume $\delta(H_q(F)) \leq 2q$ for $q \leq i$ and $h^{max}(H_q(F)) \leq f(q)$ for $q<i$ for some increasing function $f$. Then $h^{max}(H_i(F)) \leq f(i-1)+6i+3$. 
\end{lemma}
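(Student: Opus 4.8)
The plan is to run the spectral sequence of Proposition \ref{SSexists} with $N$ chosen just above $f(i-1)$, and to bound the local degree of $H_i(F)$ by controlling the terms that can interact with the crucial column $E^2_{-1,i}$. Since $h^{max}(H_i(F))$ is defined as the smallest $M$ with $\Sigma^{M+1} H_i(F)$ semi-induced, and since $\Sigma^{N} H_i(F)$ becomes an $\FI$-module once $N$ exceeds the generation degree (Proposition \ref{shiftIsFI}), the strategy is to first establish that for such $N$ the $\FI$-module $\Sigma^{N} H_i(F)$ has controlled local degree in the $\FI$-sense, and then translate back through Proposition \ref{shiftLocal}. By Lemma \ref{deltaBound} we already know $\delta(H_i(F)) \leq 2i$; combined with Proposition \ref{localStableboundPresntationDeg} applied to the approximating semi-induced modules, this gives control on generation and presentation degrees.

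The key steps, in order: First I would fix $N = f(i-1)+1$, so that $\Sigma^{N} H_q(F)$ is semi-induced for all $q < i$, and by Proposition \ref{shiftIsFI} together with the generation bound $\delta(H_i(F)) \leq 2i$ (via Lemma \ref{genPlusShift} and Proposition \ref{altDefStable}) the module $\Sigma^{N'} H_i(F)$ is an $\FI$-module for $N'$ somewhat larger. Second, I would use Proposition \ref{SSexists} with this $N$: for $p + q \leq n-2$ the $E^\infty$ vanishes, while $E^2_{p,q}(N)_n \cong H^{\cs,\hFI}_p(\Sigma^N H_q(F))_n$. For $q < i$, Corollary \ref{MWvanishingCor} and Proposition \ref{csComparison} give the vanishing $E^2_{p,q}(N)_n \cong 0$ for $p \leq n-2-2q$, exactly as in the proof of Lemma \ref{deltaBound}. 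Third, the differentials entering and leaving $E^*_{p,i}$ therefore force $H^{\cs,\hFI}_p(\Sigma^N H_i(F))_n \cong 0$ in a range of $(p,n)$ — this is where one extracts that $\Sigma^N H_i(F)$, viewed as an $\FI$-module, has bounded presentation degree (via Proposition \ref{centralControlPresent} and Proposition \ref{centralControlPresentFI}). Fourth, a semi-induced $\FI$-module with bounded presentation degree and bounded stable degree has bounded local degree (this is essentially the content of \cite[Theorem A]{CMNR}-type estimates, i.e. the input behind Proposition \ref{localStableboundPresntationDeg} run in reverse for $\FI$-modules); feeding in $\delta \leq 2i$ and the presentation bound yields $h^{max}(\Sigma^N H_i(F)) \leq $ (something linear in $i$). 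Finally, Proposition \ref{shiftLocal} gives $h^{max}(H_i(F)) \leq N + h^{max}(\Sigma^N H_i(F)) = f(i-1)+1 + (\text{linear in }i)$, and bookkeeping the constants should land at $f(i-1) + 6i + 3$.

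The main obstacle I expect is the bookkeeping in the fourth step: extracting a local-degree bound from presentation-degree and stable-degree control for the $\FI$-module $\Sigma^N H_i(F)$, keeping all constants sharp enough to reach $6i+3$ rather than a looser bound. This requires carefully invoking the $\FI$-module machinery of \cite{CMNR} — specifically the relationship between $h^{max}$, presentation degree, and $\delta$ — in the direction that bounds $h^{max}$, and tracking how the shift $N$ and the generation degree $\leq 2i$ propagate through. A secondary subtlety is making sure the spectral sequence argument genuinely isolates the column $q = i$: one must check that all incoming differentials $d^r: E^r_{-1+r, i-r+1} \to E^r_{-1, i}$ and outgoing differentials originate from or land in regions already known to vanish, which needs the range $n$ large enough relative to $i$ and $N$, and this is what ultimately determines the additive constant.
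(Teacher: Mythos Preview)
Your outline is correct and mirrors the paper's proof almost exactly: set $N = f(i-1)+1$, use the spectral sequence and the semi-inducedness of $\Sigma^N H_q(F)$ for $q<i$ to force $H_{-1}^{\cs,\hFI}(\Sigma^{N'} H_i(F))_n = 0$ for $n>2i$ and $H_{0}^{\cs,\hFI}(\Sigma^{N'} H_i(F))_n = 0$ for $n>2i+1$, shift by a further $2i+2$ to get an honest $\FI$-module $M$ (via Proposition~\ref{shiftIsFI}), then apply Proposition~\ref{centralControlPresentFI} and the CMNR bound (their Equation~$(\star\star)$) to get $h^{\max}(M)\le 4i$, and finish with Proposition~\ref{shiftLocal} to obtain $f(i-1)+2i+3+4i = f(i-1)+6i+3$. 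One slip to fix: in your fourth step you call $M$ ``semi-induced''---it is not known to be so (that is precisely what bounding $h^{\max}$ establishes); the correct statement is that an $\FI$-module with generation degree $\le 2i$ and presentation degree $\le 2i+1$ has local degree $\le 4i$ by \cite{CMNR}.
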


\begin{proof}
Let $N=f(i-1)+1$ and let $N' \geq N$. As in the proof of Lemma  \ref{deltaBound}, we have that $$E^2_{p,q}(N')_n \cong 0 \text{ for } p \leq n -2 -2q \text{ and }q<i$$ and hence $$ H_{-1}^{\cs,\hFI}(\Sigma^{N'} H_i(F))_n  \cong 0 \text{ for }n >2i. $$ By considering $E^2_{0,i}(N')_n$ instead of $E^2_{-1,i}(N')_n$, we get the inequality $$ H_0^{\cs,\hFI}(\Sigma^{N'} H_i(F))_n  \cong 0 \text{ for } n >2i+1. $$  Let $M=\Sigma^{N+ 2i+2} H_i(F)$. By Proposition \ref{shiftIsFI}, $M$ is an $\FI$-module. By Proposition \ref{csComparison}, $H_{-1}^{\cs,\FI}(M) \cong 0$ for $n > 2i$ and $H_{0}^{\cs,\FI}(M) \cong 0$ for $n >2i+1$. By Proposition \ref{centralControlPresentFI}, we have that $M$ has generation degree $\leq 2i$ and presentation degree $\leq 2i+1$. By \cite[Equation ($\star \star$)]{CMNR}, the local degree of $M$ is $\leq 4i$. Since $M$ is an $f(i-1)+2i+3$-fold shift of $H_i(F)$, by Proposition \ref{shiftLocal}, we conclude that the local degree of $H_i(F)$ is $\leq f(i-1)+6i+3$. 
\end{proof}

Solving the recurrence and combining Lemma \ref{inductionBegining}, Lemma \ref{deltaBound}, Lemma \ref{lemmaLocal} gives the following.

\begin{proposition}
The stable degree of $H_i(F)$ is $\leq 2i$ and the local degree is $\leq -1+6i+3i^2$.
\end{proposition}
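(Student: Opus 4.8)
The plan is to prove the two bounds by a single induction on $i$, packaging the three preceding lemmas into a recurrence. First I would observe that the stable degree bound $\delta(H_i(F)) \le 2i$ is essentially immediate: Lemma \ref{inductionBegining} gives the base case $\delta(H_0(F)) \le 0$ and $h^{max}(H_0(F)) = -1$, and then Lemma \ref{deltaBound} lets us conclude $\delta(H_i(F)) \le 2i$ for all $i$ by induction, \emph{provided} we already know that $h^{max}(H_q(F))$ is finite for all $q < i$. So the real content is establishing finiteness — and the explicit bound — for the local degree, which is where Lemma \ref{lemmaLocal} enters.

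Next I would set up the local-degree recurrence. Define $f$ by $f(0) = -1$ (the value from Lemma \ref{inductionBegining}) and use Lemma \ref{lemmaLocal}, which under the hypotheses $\delta(H_q(F)) \le 2q$ for $q \le i$ and $h^{max}(H_q(F)) \le f(q)$ for $q < i$ gives $h^{max}(H_i(F)) \le f(i-1) + 6i + 3$. Since we will have already secured the stable-degree hypothesis from the previous paragraph's induction, the hypotheses of Lemma \ref{lemmaLocal} are met at each stage, so we may take the recurrence $f(i) = f(i-1) + 6i + 3$ with $f(0) = -1$. Solving: $f(i) = -1 + \sum_{j=1}^{i}(6j+3) = -1 + 6\cdot\frac{i(i+1)}{2} + 3i = -1 + 3i^2 + 3i + 3i = -1 + 3i^2 + 6i = -1 + 6i + 3i^2$, which is exactly the claimed bound. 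The two inductions should be carried out simultaneously (or, more cleanly, the stable-degree statement first as a standalone induction, then the local-degree statement as a second induction that is allowed to cite the first), since Lemma \ref{deltaBound} needs finiteness of earlier local degrees while Lemma \ref{lemmaLocal} needs the stable-degree bound at the current level.

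The main obstacle is really just bookkeeping the logical dependencies correctly: Lemma \ref{deltaBound} requires $h^{max}(H_q(F))$ finite for $q < i$ but not a specific value, so one cannot run a naive single induction that establishes both bounds at level $i$ before moving on — one must be careful that at the point where Lemma \ref{deltaBound} is invoked, the finiteness of the relevant local degrees is already known from strictly earlier stages. The cleanest route, which I would write up, is: (1) prove $\delta(H_i(F)) \le 2i$ for all $i$ by strong induction, invoking at step $i$ both the inductive stable-degree bounds (Lemma \ref{deltaBound}) and the already-established finiteness of all lower local degrees; but since finiteness of $h^{max}(H_q(F))$ for all $q$ is itself what needs proving, I would instead interleave: assume inductively $\delta(H_q(F)) \le 2q$ and $h^{max}(H_q(F)) \le f(q)$ for all $q < i$; apply Lemma \ref{deltaBound} to get $\delta(H_i(F)) \le 2i$; then apply Lemma \ref{lemmaLocal} (whose hypotheses are now satisfied for $q \le i$ and $q < i$ respectively) to get $h^{max}(H_i(F)) \le f(i-1) + 6i + 3 = f(i)$. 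This closes the induction and yields both bounds. The arithmetic of solving the recurrence is the only ``calculation,'' and it is the one displayed above.

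\begin{proof}
We argue by strong induction on $i$, proving simultaneously that $\delta(H_i(F)) \le 2i$ and $h^{max}(H_i(F)) \le -1 + 6i + 3i^2$. Set $f(i) := -1 + 6i + 3i^2$, so that $f(0) = -1$ and $f(i) = f(i-1) + 6i + 3$ for $i \ge 1$; note $f$ is increasing.

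For $i = 0$, Lemma \ref{inductionBegining} gives $\delta(H_0(F)) \le 0 = 2\cdot 0$ and $h^{max}(H_0(F)) = -1 = f(0)$.

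Now fix $i \ge 1$ and assume $\delta(H_q(F)) \le 2q$ and $h^{max}(H_q(F)) \le f(q)$ for all $q < i$. In particular $h^{max}(H_q(F))$ is finite for all $q < i$, so Lemma \ref{deltaBound} applies and gives $\delta(H_i(F)) \le 2i$. With this in hand, the hypotheses of Lemma \ref{lemmaLocal} are satisfied: $\delta(H_q(F)) \le 2q$ for all $q \le i$, and $h^{max}(H_q(F)) \le f(q)$ for $q < i$ with $f$ increasing. Hence $h^{max}(H_i(F)) \le f(i-1) + 6i + 3 = f(i)$, which completes the induction.
\end{proof}
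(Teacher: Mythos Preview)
Your proposal is correct and takes essentially the same approach as the paper, which simply says the result follows by ``solving the recurrence and combining'' the three lemmas. You have made explicit exactly the interleaved induction and the recurrence computation that the paper leaves to the reader.
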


Combining this with Proposition \ref{localStableboundPresntationDeg} gives the following.

\begin{theorem} \label{mainRange}
The generation degree of $H_i(F)$ is $\leq 8i+3i^2$ and the presentation degree is $\leq 14i+6i^2$.
\end{theorem}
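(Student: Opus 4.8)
The plan is to combine the two numerical invariants from the previous proposition with the general bound relating them to generation and presentation degree. The key point is that we already have, from the preceding \autoref{localStableboundPresntationDeg}, the inequalities $\gen(M) \leq \delta(M) + h^{max}(M) + 1$ and $\operatorname{pres}(M) \leq 2 h^{max}(M) + \delta(M) + 2$ for any $\hFI$-module $M$. So the theorem should follow by plugging $M = H_i(F)$ into these, using $\delta(H_i(F)) \leq 2i$ and $h^{max}(H_i(F)) \leq -1 + 6i + 3i^2$.

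First I would recall the statement of \autoref{localStableboundPresntationDeg} applied to $M = H_i(F)$: if $N$ denotes the local degree and $d$ the stable degree, then $\gen(H_i(F)) \leq d + N + 1$ and $\operatorname{pres}(H_i(F)) \leq 2N + d + 2$. Next I would substitute the bounds from the immediately preceding proposition, namely $d \leq 2i$ and $N \leq -1 + 6i + 3i^2$. For the generation degree this gives
\[
\gen(H_i(F)) \leq (2i) + (-1 + 6i + 3i^2) + 1 = 8i + 3i^2,
\]
and for the presentation degree
\[
\operatorname{pres}(H_i(F)) \leq 2(-1 + 6i + 3i^2) + (2i) + 2 = -2 + 12i + 6i^2 + 2i + 2 = 14i + 6i^2,
\]
as claimed.

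There is essentially no obstacle here: the content of the theorem is entirely front-loaded into the proposition computing the stable and local degrees (which in turn rested on the spectral sequence of \autoref{SSexists} and the inductive \autoref{deltaBound} and \autoref{lemmaLocal}), together with the structural \autoref{localStableboundPresntationDeg}. The only thing to be careful about is that \autoref{localStableboundPresntationDeg} uses the \emph{exact} values $N, d$ of the local and stable degrees rather than upper bounds, so I should note that its conclusion is monotone in those values: enlarging the claimed local or stable degree only weakens the hypothesis while keeping the same form of conclusion, so feeding in the upper bounds $2i$ and $-1 + 6i + 3i^2$ is legitimate. (Concretely, if $\delta(M) \leq d'$ and $h^{max}(M) \leq N'$, apply the proposition with its genuine invariants and then enlarge: $d + N + 1 \leq d' + N' + 1$ and $2N + d + 2 \leq 2N' + d' + 2$.) With that observation in place, the arithmetic above completes the proof.
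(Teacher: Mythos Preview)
Your proposal is correct and matches the paper's approach exactly: the paper simply says the theorem follows by combining the previous proposition (giving $\delta(H_i(F)) \leq 2i$ and $h^{max}(H_i(F)) \leq -1+6i+3i^2$) with Proposition~\ref{localStableboundPresntationDeg}, and your write-up just makes that arithmetic explicit. Your remark about monotonicity in the bounds is a helpful clarification the paper leaves implicit.
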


Since $F_n$ is an algebraic variety,  $H_i(F_n)$ is finitely generated as an abelian group for all $i$ and $n$, Theorem \ref{theoremA} follows from Theorem \ref{mainRange}. Theorem \ref{theoremB} follows from Theorem \ref{mainRange} and  Proposition \ref{shiftIsFI}.

\begin{remark}
It seems very plausible that a linear stable range is in fact optimal. Gan--Li \cite{GanLiCongruenceSubgroups} were able to prove linear stable ranges for congruence subgroups of general linear groups. Can one adapt their techniques to the case of Milnor fibers? One major obstacle to doing this is the fact that the chains on the Milnor fibers do not seem to be homotopy equivalent to an $\hFI$-chain complex. 
\end{remark}


\section{Stable calculations}
In this section,  we will study  $H_i(F_n, \Z)$ in the  range where  the action of $\mu_{n \choose 2}$  is trivial. In particular,  we will compare its homology to $H_i({\rm Conf}_n (\C) / \C^*, \Z)$  using the fact that ${\rm Conf}_n( \C) / \C^* =  F_n/\mu_{n \choose 2}$.     

\begin{theorem}\label{computation}
Suppose that $\mu_{n \choose 2}$ acts trivially  on $H_i(F_n, \Z)$ for $i \leq k$.  Then $H_i(F_n, \Z)$ is torsion free, and the map  $F_n \to {\rm Conf}_n(\C )/ \C^*$ induces an $S_n$ equivariant isomorphism on rational homology.
\end{theorem}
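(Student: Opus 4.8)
The plan is to combine two structures on the Milnor fiber: the global Milnor fibration of the homogeneous polynomial $q_n$, and the free action of the finite cyclic group $\mu_{n \choose 2}$.

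First I would observe that $q_n(x)=\prod_{i<j}(x_i-x_j)$ is homogeneous of degree $m:=\binom{n}{2}$, so by the global Milnor fibration theorem $q_n\colon \Conf_n(\C)\to\C^*$ is a locally trivial fibration with fiber $F_n$ whose geometric monodromy may be taken to be $x\mapsto \zeta_m x$; in particular the monodromy acting on $H_*(F_n)$ is the action of a generator of $\mu_{n \choose 2}$. Feeding this fibration over $\C^*\simeq S^1$ into the Wang exact sequence produces
\[
\cdots \to H_i(F_n)\xrightarrow{t_*-\id} H_i(F_n)\xrightarrow{\iota_*} H_i(\Conf_n(\C)) \to H_{i-1}(F_n)\xrightarrow{t_*-\id}\cdots,
\]
where $\iota\colon F_n\hookrightarrow\Conf_n(\C)$ is the inclusion of a fiber and $t_*$ is the monodromy. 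By hypothesis $t_*=\id$ on $H_i(F_n,\Z)$ for $i\leq k$, so in that range the Wang sequence breaks into short exact sequences
\[
0\to H_i(F_n,\Z)\xrightarrow{\iota_*} H_i(\Conf_n(\C),\Z)\to H_{i-1}(F_n,\Z)\to 0 .
\]
Because $\Conf_n(\C)$ is the complement of the braid hyperplane arrangement, $H_*(\Conf_n(\C),\Z)$ is free abelian (Arnold; Orlik--Solomon), so $H_i(F_n,\Z)$, being isomorphic to a subgroup of it, is torsion free for $i\leq k$.

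For the homology comparison I would use the transfer of the covering $\pi\colon F_n\to F_n/\mu_{n \choose 2}=\Conf_n(\C)/\C^*$; this is a genuine (regular, degree $m$) covering because for $n\geq 2$ the $\C^*$-action on $\Conf_n(\C)$ is free, hence so is the restricted $\mu_{n \choose 2}$-action on $F_n$. The standard transfer argument shows that $\pi_*$ restricts to an isomorphism $H_*(F_n,\Q)^{\mu_{n \choose 2}}\xrightarrow{\ \sim\ } H_*(\Conf_n(\C)/\C^*,\Q)$: one has $\pi_*\circ\mathrm{tr}=m\cdot\id$ and $\pi_*=\pi_*\circ\tfrac{1}{m}\sum_g g_*$, so $\pi_*$ is surjective and kills the complement of the invariants, while $\tfrac{1}{m}\,\mathrm{tr}\circ\pi_*$ is the averaging projection onto the invariants. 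Since $\mu_{n \choose 2}$ acts trivially on $H_i(F_n,\Z)$, and hence on $H_i(F_n,\Q)$, for $i\leq k$, the invariant part is everything in that range, so $\pi_*\colon H_i(F_n,\Q)\to H_i(\Conf_n(\C)/\C^*,\Q)$ is an isomorphism for $i\leq k$.

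It remains to check $\rS_n$-equivariance. The group $\hS_n$ acts on $F_n$ by the formula of the introduction, and the kernel of $\hS_n\twoheadrightarrow\rS_n$ — the elements $(e,2c)$ — acts through $\mu_{n \choose 2}$; consequently $\hS_n$ acts on $\Conf_n(\C)/\C^*=F_n/\mu_{n \choose 2}$ through $\rS_n$, by the usual permutation action, and $\pi$ is $\hS_n$-equivariant. For $i\leq k$ the $\hS_n$-action on $H_i(F_n)$ also factors through $\rS_n$ since $\mu_{n \choose 2}$ acts trivially, so $\pi_*$ is a morphism of $\rS_n$-modules in that range. I do not expect a serious obstacle anywhere: the argument is an assembly of standard facts, and the one point that must be pinned down carefully is the identification of the monodromy of the Milnor fibration with a generator of $\mu_{n \choose 2}$ acting on $F_n$ — i.e. that the Milnor monodromy and the root-of-unity action are compatibly normalized — so that the hypothesis genuinely feeds into the Wang sequence.
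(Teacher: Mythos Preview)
Your proof is correct. The rational comparison via the transfer for the finite cover $F_n \to F_n/\mu_{\binom{n}{2}} = \Conf_n(\C)/\C^*$ is essentially the paper's Proposition~\ref{rational}.

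For torsion-freeness you take a genuinely different and more elementary route than the paper. The paper (Proposition~\ref{torsionFree}, supported by Lemma~\ref{resolution}) works with $\bbF_p$ coefficients: it builds a minimal projective resolution of $C_*(F_n;\bbF_p)$ over $\bbF_p[\mu_{\binom{n}{2}}]$, uses it to show $\dim_{\bbF_p} H_j(F_n) = \dim_{\bbF_p} H_j(\Conf_n(\C)/\C^*)$ in the relevant range, and deduces torsion-freeness from universal coefficients together with the (known) torsion-freeness of $H_*(\Conf_n(\C)/\C^*,\Z)$. Your Wang-sequence argument instead embeds $H_i(F_n,\Z)$ directly into $H_i(\Conf_n(\C),\Z)$ via $\iota_*$, and appeals to Arnold's theorem that the integral homology of $\Conf_n(\C)$ is free. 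This avoids all the $\bbF_p$ bookkeeping and the resolution lemma, and it actually gives torsion-freeness for all $i \leq k$, whereas the paper's argument only reaches $i \leq k-1$. The trade-off is that the paper's approach stays entirely within the comparison $F_n \to \Conf_n(\C)/\C^*$ that Theorem~\ref{theoremC} is phrased in terms of, while yours brings in the ambient $\Conf_n(\C)$ as an intermediary; but that is a matter of taste, not of strength.
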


The main content of the above theorem is that the homology of $F_n$ is torsion free.  The rank of the group was already determined by Settepanella \cite[Theorem 1.2]{Sett}.

\subsection{Comparing $\Conf_n(\C)/\C^*$ and $F_n$}
First we note that with $\Q$ coefficients, the homology of $F_n$ is  canonically isomorphic to the homology of $\Conf_n(\C)/\C^*$.  

\begin{proposition}\label{rational}
If  $\mu_{n \choose 2}$  acts trivially on $H_i( F_n, \Q)$  for  $i \leq  k$,  then $H_i(F_n, \Q) \iso  H_i(\Conf_n(\C)/\C^*, \Q)$  for  $i \leq k$.
\end{proposition}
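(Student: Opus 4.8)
The plan is to realize $\Conf_n(\C)/\C^*$ as the quotient $F_n/\mu_{n\choose 2}$ and run a transfer argument. Since $\mu_{n\choose 2}$ is a finite cyclic group acting on $F_n$ with quotient $\Conf_n(\C)/\C^*$, the projection $\pi\colon F_n\to \Conf_n(\C)/\C^*$ induces, with $\Q$ coefficients, an isomorphism
\[
H_i(F_n,\Q)_{\mu_{n\choose 2}} \;\xrightarrow{\ \sim\ }\; H_i(\Conf_n(\C)/\C^*,\Q),
\]
identifying the homology of the quotient with the coinvariants of the $\mu_{n\choose 2}$-action (equivalently the invariants, via the averaging/transfer map, since $|\mu_{n\choose 2}|$ is invertible in $\Q$). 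This is the standard fact that for a finite group $G$ acting on a space $Y$, $H_*(Y/G,\Q)\cong H_*(Y,\Q)_G$. First I would record this isomorphism and check that it is $\rS_n$-equivariant: the $\rS_n$-action on $F_n$ by permuting coordinates commutes with the $\mu_{n\choose 2}$-action by scaling, so both descend compatibly to $\Conf_n(\C)/\C^*$, and the transfer map is natural in $\rS_n$.

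Next, by hypothesis $\mu_{n\choose 2}$ acts trivially on $H_i(F_n,\Q)$ for $i\leq k$. Therefore in that range the coinvariants are the whole group: $H_i(F_n,\Q)_{\mu_{n\choose 2}} = H_i(F_n,\Q)$. Combining this with the transfer isomorphism of the previous paragraph yields the canonical $\rS_n$-equivariant isomorphism
\[
H_i(F_n,\Q)\;\xrightarrow{\ \sim\ }\;H_i(\Conf_n(\C)/\C^*,\Q)\qquad\text{for } i\leq k,
\]
induced by $\pi_*$ itself (the transfer is a one-sided inverse to $\pi_*$ on invariants, and here invariants are everything). This completes the proof.

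The only point requiring a little care — and the step I expect to be the main (minor) obstacle — is verifying that it is genuinely $\pi_*$ that is the isomorphism, not merely some abstract identification of $H_*(F_n,\Q)$ with $H_*(\Conf_n(\C)/\C^*,\Q)$. One should note that $\pi_*$ always factors through $H_i(F_n,\Q)_{\mu_{n\choose 2}}$, and the transfer shows the induced map on coinvariants is an isomorphism; the triviality hypothesis makes the quotient map $H_i(F_n,\Q)\to H_i(F_n,\Q)_{\mu_{n\choose 2}}$ an isomorphism, so $\pi_*$ is too. Naturality of transfer for the residual $\rS_n$-action gives the equivariance for free. (The torsion-freeness and integral statements of Theorem \ref{computation} are a separate matter, handled elsewhere; here only the rational, canonical comparison is claimed.)
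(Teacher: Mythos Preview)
Your proof is correct and follows essentially the same approach as the paper: identify $\Conf_n(\C)/\C^*$ with $F_n/\mu_{n\choose 2}$, use that rational homology of a finite quotient is the coinvariants (via transfer/pushforward), and then invoke the triviality hypothesis to conclude coinvariants equal the whole group. The paper's argument is just a terser version of yours and does not separately spell out the $\rS_n$-equivariance or the fact that $\pi_*$ itself realizes the isomorphism.
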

\begin{proof}
The group  $\mu_{n \choose 2}$ acts freely on $F_n$  and its quotient is $\Conf_n(\C)/\C^*$. Thus the rational homology of  $\Conf_n(\C)/\C^*$  is canonically  identified with the coinvariants  $H_i(F_n, \Q)_{\mu_{n \choose 2}}$  under the pushforward map.   Since $\mu_{n \choose 2}$ acts trivially for $i \leq k$,  we obtained the desired isomorphism.
\end{proof}

\begin{proposition} \label{torsionFree}
If  $\mu_{n \choose 2}$  acts trivially on $H_i( F_n, \Z)$  for all  $i \leq  k$,  then  $H_i(F_n, \Z)$ is torsion free for all $i \leq k-1$.  
\end{proposition}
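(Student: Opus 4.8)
The plan is to exploit the free action of $\mu_{n \choose 2}$ on $F_n$ with quotient $\Conf_n(\C)/\C^*$, and run the associated Cartan--Leray spectral sequence with $\Z$ coefficients. Since $\mu := \mu_{n \choose 2}$ is a finite cyclic group acting freely, we have $E^2_{p,q} = H_p(\mu, H_q(F_n,\Z))$ converging to $H_{p+q}(\Conf_n(\C)/\C^*, \Z)$. Under the hypothesis that $\mu$ acts trivially on $H_q(F_n,\Z)$ for $q \leq k$, the groups $E^2_{p,q}$ for $q \leq k$ become $H_p(\mu, A)$ for $A = H_q(F_n,\Z)$ with trivial action, i.e. group homology of a finite cyclic group with trivial coefficients, which is $A$ for $p=0$, and for $p \geq 1$ is $A/|\mu|A$ when $p$ is odd and the $|\mu|$-torsion submodule $A[|\mu|]$ when $p$ is even and positive.

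First I would observe that the key leverage comes from $\Conf_n(\C)/\C^*$ having finite homological dimension: it is homotopy equivalent to $M_{0,n+1}$, a smooth affine variety of complex dimension $n-2$, so $H_j(\Conf_n(\C)/\C^*,\Z) = 0$ for $j > n-2$ (or one can use that it is an open manifold of real dimension $2n-4$, or note directly that $\Conf_n(\C)/\C^*$ is a $K(\pi,1)$ of cohomological dimension $n-2$). Meanwhile $F_n$ is also a $K(\pi,1)$, but its relevant feature is that $H_q(F_n,\Z)$ is finitely generated. The strategy is a downward induction / diagonal-chasing argument: I want to show inductively on $q \leq k-1$ that $H_q(F_n,\Z)$ has no torsion. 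Suppose $H_q(F_n,\Z)$ had nontrivial torsion, so its $|\mu|$-torsion subgroup were nonzero for some prime dividing $|\mu|$ (note $\mu$ is cyclic of order $\binom{n}{2}$, and one needs to be careful that the torsion might be prime to $|\mu|$ — see below). Then the column $E^2_{p,q}$ would be nonzero for all even $p > 0$, giving infinitely many nonzero entries on the anti-diagonals $p+q = $ const; but these entries live in a region where they could survive to $E^\infty$ and contribute to $H_{p+q}$ of the quotient, contradicting its finite-dimensionality — after checking that differentials cannot kill all of them.

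The hard part will be two things. First, the torsion in $H_q(F_n,\Z)$ a priori need not be $|\mu|$-torsion: if $H_q(F_n,\Z)$ has $\ell$-torsion for $\ell \nmid |\mu| = \binom{n}{2}$, then $H_p(\mu, H_q(F_n,\Z))$ still sees it only at $p = 0$ (since $H_p(\mu, -)$ for $p>0$ is annihilated by $|\mu|$, and an $\ell$-torsion group with $\gcd(\ell,|\mu|)=1$ contributes nothing). To handle this I expect one must instead run the argument with $\F_p$ coefficients for each prime $p$, or more cleverly note that the universal coefficient / transfer argument already gives: the transfer shows $H_j(F_n,\Z)$ is a summand of $H_j(\Conf_n(\C)/\C^*, H_0(\text{fiber})) \oplus \cdots$ — actually the cleanest route is that for a free action of a finite group $G$ of order $m$, the composite $H_j(X/G) \to H_j(X) \to H_j(X/G)$ (transfer then pushforward) is multiplication by $m$, so $H_j(F_n,\Z)$ is, up to $m$-torsion, controlled by $H_j(\Conf_n(\C)/\C^*)$; combined with the spectral sequence collapsing information this should pin down both $p$-primary parts for $p \mid m$ and $p \nmid m$. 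The second subtlety is ensuring no nonzero differentials emanate from or land in the relevant $E^2_{2j,q}$ entries; here I would use that $H_q(\Conf_n(\C)/\C^*)$ is torsion-free (which is known, or follows from the Arnold/Getzler description), forcing all the periodic torsion classes in the $q$-row to die, and by comparing with the corresponding vanishing for $H_{q-1}$ (inductive hypothesis) one traps a contradiction unless $H_q(F_n,\Z)$ is torsion-free. I would carry out the induction base case $q = 0$ (trivial, $H_0 = \Z$) and then the inductive step, isolating the smallest $q \leq k-1$ with torsion and deriving the contradiction from the infinitely many surviving $E^\infty$ classes versus $\dim H_{\text{large}}(\Conf_n(\C)/\C^*) < \infty$.
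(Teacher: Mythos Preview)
Your overall strategy---compare $F_n$ to the quotient $\Conf_n(\C)/\C^* = F_n/\mu$ and use that the latter has torsion-free integral homology---is the same as the paper's, and your transfer argument for primes $\ell \nmid |\mu|$ is correct: triviality of the action makes both composites $\pi_*\tau$ and $\tau\pi_*$ equal to multiplication by $|\mu|$, so $\pi_*$ is an isomorphism on $H_j(-;\F_\ell)$ for $j\le k$, and universal coefficients rules out $\ell$-torsion.  The difficulty is entirely at primes $\ell \mid |\mu|$, and there your Cartan--Leray argument has a genuine gap.

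The gap is exactly the step you flag as ``after checking that differentials cannot kill all of them.''  Take the smallest $q\le k-1$ with $\ell$-torsion in $H_q(F_n,\Z)$.  Row $q$ of $E^2$ indeed carries nonzero $\ell$-torsion in every positive even column, and all of it must vanish by $E^\infty$ since the abutment is torsion-free.  But the differentials $d_r\colon E^r_{p,q}\to E^r_{p-r,\,q+r-1}$ go \emph{upward}, into rows $q+1,\dots,n-1$.  Your inductive hypothesis says nothing about these: rows $q+1,\dots,k$ may themselves carry torsion (that is precisely what remains to be proved), and rows above $k$ have an unknown $\mu$-action.  There is no bound on how much torsion the higher rows can absorb, and no contradiction is forced.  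The finite homological dimension of the quotient only tells you $E^\infty$ eventually vanishes; it does not constrain the cancellation mechanism.  Concretely, if $H_q$ and $H_{q+1}$ both had a $\Z/\ell$ summand, $d_2$ could pair the even-column torsion in row $q$ isomorphically with that in row $q+1$, and both would disappear with no inconsistency.

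The paper avoids this by abandoning the spectral sequence.  For each prime $p$ it builds (Lemma~\ref{resolution}) a projective resolution $G_*\xrightarrow{\simeq} C_*(F_n;\F_p)$ over $\F_p[\mu]$ which, in degrees $\le k$, uses only powers of the indecomposable projective $P=\F_p[x]/(x-1)^{p^d}$ and is \emph{minimal} in the sense that its differentials vanish modulo $(x-1)$.  Then $G_*\otimes_{\F_p[\mu]}\F_p$ computes $H_*(\Conf_n(\C)/\C^*;\F_p)$ with zero differential in degrees $\le k$, so $G_j\cong P^{r_j}$ with $r_j=\dim H_j(\Conf_n(\C)/\C^*;\F_p)$.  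The decisive algebraic fact is that every subquotient of $P^{\oplus r}$ is generated by at most $r$ elements (pull back to $\F_p[x]^{\oplus r}$ and use that $\F_p[x]$ is a PID).  Since $\mu$ acts trivially on $H_j(F_n;\F_p)$, this gives $\dim H_j(F_n;\F_p)\le r_j$ directly.  The argument never needs to look at what happens in homological degrees above $j$.
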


To prove this proposition, we will need the following lemma. 

\begin{lemma}\label{resolution}
Let $\bbZ/m$ be an abelian group,  and let $C_*$  be a chain complex of $\bbF_p[\bbZ/m] = \bbF_p[x]/(x^{m} - 1)$   modules  concentrated in homological degree $\geq 0$.  Assume that for all $i \leq k$,  that $H_i(C_*)$ is finite dimensional, and $x - 1$ acts nilpotently on $H_i(C_*)$.  We construct a chain complex $G_*$ of projective $\bbF_p[\bbZ/m]$  modules and a quasi-isomorphism  $f: G_* \xrightarrow{\simeq} C_*$ such that:
\begin{enumerate}
\item for all  $i \leq k$,  $G_i$ is isomorphic $P^{\oplus r}$ for some $r \in \mathbb N$,  where $P$ is the module  $P := \bbF_p[x]/(x - 1)^{p^d}$, and $p^d$ is the largest power of $p$ dividing $m$,
\item for all $i \leq k +1$,  the differential $d_i: G_i \to G_{i-1}$ is zero mod $x-1$.  
\end{enumerate}
\end{lemma}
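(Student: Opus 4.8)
The plan is to build $G_*$ by the standard procedure of constructing a complex of projectives mapping quasi-isomorphically to $C_*$, but doing so \emph{carefully} so that in each degree $\le k$ the chosen projective is a direct sum of copies of the specific indecomposable $P = \bbF_p[x]/(x-1)^{p^d}$, and so that the differentials are divisible by $x-1$. The algebraic heart of the matter is the structure of $\bbF_p[\bbZ/m]$: writing $m = p^d m'$ with $p \nmid m'$, we have $\bbF_p[x]/(x^m-1) \cong \bbF_p[x]/(x^{p^d}-1) \otimes \bbF_p[x]/(x^{m'}-1)$, the second factor is \'etale (a product of fields), and $\bbF_p[x]/(x^{p^d}-1) = \bbF_p[x]/(x-1)^{p^d}$ is a local ring with residue field $\bbF_p$; thus the indecomposable projectives of $\bbF_p[\bbZ/m]$ are exactly the modules $P \otimes (\text{field factor})$. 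The hypothesis that $x-1$ acts nilpotently on $H_i(C_*)$ for $i \le k$ is precisely what forces only the ``$x-1$-unipotent block'' $P$ of the group algebra to appear in those degrees: over the \'etale part, the homology is supported only at the trivial character.

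First I would set up the inductive construction. Start in degree $0$: let $V_0$ be the $\bbF_p[\bbZ/m]$-module $H_0(C_*)$ (or rather a submodule of $C_0$ surjecting onto a generating set), and choose a surjection from a free module; but to get the ``sum of copies of $P$'' conclusion I instead argue that because $x-1$ acts nilpotently on $H_0$, one can lift a minimal generating set and take the projective cover, which by the structure above is $P^{\oplus r_0}$. Inductively, suppose $G_{\le i} \to C_*$ has been built with the stated properties through degree $i \le k$; let $Z$ be the kernel of $G_i \to G_{i-1}$ together with the map to cycles/boundaries of $C_*$, i.e. consider the fiber product controlling what must be killed in degree $i+1$. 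One shows $x-1$ acts nilpotently on the relevant ``homology to be corrected'' module (this is where the nilpotence hypothesis propagates: it holds for $H_i(C_*)$ by assumption, and the kernel of a map between sums of $P$'s is again an $x-1$-nilpotent module since $P$ itself is), and hence its projective cover is again a sum of copies of $P$; take $G_{i+1}$ to be that projective cover (for $i+1 \le k$) and define $d_{i+1}$ accordingly. Finally, past degree $k$, just complete $G_*$ to a projective resolution in the usual way with no control claimed.

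For property (2) — that $d_i \equiv 0 \pmod{x-1}$ for $i \le k+1$ — the key point is that $G_i/(x-1)G_i \cong \bbF_p^{\oplus r_i}$ and, by the minimality of the projective covers chosen at each stage (projective cover = no superfluous generators), the induced map $G_i/(x-1) \to G_{i-1}/(x-1)$ must be zero: a nonzero such map would contradict minimality of the cover $G_{i-1} \to (\text{image of } d_{i-1})$, since a generator of $G_i$ mapping to a generator of $G_{i-1}$ mod $x-1$ would exhibit that generator of $G_{i-1}$ as redundant (it would already lie in the image of the differential). One has to be slightly careful that this argument reaches degree $k+1$ and not just $k$: $d_{k+1}\colon G_{k+1}\to G_k$ has source possibly not of the controlled form, but its reduction mod $x-1$ still vanishes by the same minimality argument applied to the projective cover $G_k$.

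The main obstacle I anticipate is the bookkeeping in the inductive step: precisely identifying which module's projective cover to take so that the resulting complex is genuinely quasi-isomorphic to $C_*$ (not merely in low degrees), and verifying that the ``error term'' one resolves at stage $i+1$ is $x-1$-nilpotent and finite-dimensional. The cleanest way to organize this is probably to first replace $C_*$ by a quasi-isomorphic complex of projectives $\tilde C_*$ by a standard argument, then decompose $\tilde C_*$ according to the idempotent decomposition coming from the \'etale factor $\bbF_p[x]/(x^{m'}-1)$, observe that the non-unipotent summands of $\tilde C_*$ are acyclic in degrees $\le k$ (by the nilpotence hypothesis together with the fact that over a field factor a bounded-below complex of free modules with vanishing homology is contractible in a range), and discard them in low degrees; what remains is a complex of modules over the local ring $\bbF_p[x]/(x-1)^{p^d}$, where every projective is automatically a sum of copies of $P$, and where ``minimal'' resolutions with differentials in the maximal ideal $(x-1)$ exist by the usual local-ring argument (Nakayama). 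I expect the write-up to proceed along exactly these lines.
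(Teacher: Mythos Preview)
Your proposal is correct and follows essentially the same approach as the paper: both build $G_*$ inductively by taking minimal (projective-cover) lifts of the homology of the cone of the partial map $G_{<i} \to C_*$, using the CRT decomposition of $\bbF_p[\bbZ/m]$ to see that $P$ is projective and that $(x-1)$-nilpotent modules have projective cover a sum of copies of $P$, and both deduce $d_i \equiv 0 \pmod{x-1}$ from minimality of the target $G_{i-1}$ (which is why the argument reaches $i = k+1$ even though $G_{k+1}$ is only free).

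Your alternative organization---first replace $C_*$ by a complex of projectives, decompose via the idempotents of the \'etale factor $\bbF_p[x]/(x^{m'}-1)$, observe the non-unipotent summands are acyclic in degrees $\le k$, and then work entirely over the local ring $\bbF_p[x]/(x-1)^{p^d}$ where minimal resolutions exist by Nakayama---is a genuinely cleaner packaging than the paper's hands-on cone argument, and makes more transparent why only $P$ appears; the paper instead keeps track of the cone homology explicitly at each step. Either route works, and yours would likely be shorter to write out in full.
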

\begin{proof}
 First, note that $P$ is a summand of $$\bbF_p[x]/(x^m -1) = \prod \bbF_p[x]/(x^{q_i^{d_i}} -1)$$ by the Chinese remainder theorem, and so is projective.  Also  $x-1$  acts invertibly on all of the other factors,  so that if a power of $(x-1)$  annihilates a  element of a $\bbF_p[\bbZ/m]$-module,  then $(x-1)^{p^d}$  annihilates it.  

We construct the resolution $G_*$ inductively in the usual way. To determine $G_0$, choose $m_1,  \dots, m_{r_0}$ a collection generators of  $H_0(C_*)$ which is minimal in the sense that the associated map  $P^{\oplus r_0}  \to H_0(C_*)$  is an isomorphism mod $x-1$.   We let  $G_0 = P^{\oplus r_0}$,  and choose a lift of $G_0 \to H_0(C_*)$ to $f_0: G_0 \to  Z_0(C_*) \subset C_0$.    

To determine $G_1$,  we consider  $H_1({\rm cone}(G_0  \to  C_*)) =  \ker(G_0 \oplus C_1 \to  C_0)/d(C_2)$,  and again choose a collection of minimal generators  which give a map  $P^{r_1}  \to  H_1({\rm cone}(G_0  \to  C_*))$,  which lifts to a map $d_1 \oplus f_0:  P^{r_1}  \to  G_0 \oplus C_1$.  The map from the two term complex is now a quasi-isomorphism in degree $0$, and a surjection on homology in degree $1$.   The map  $d_1:  G_1 \to G_0$ is minimal because  its image is  $\{ g \in G_0~|~ \exists c \in C_1 , ~ f_0(g) = d(c)\}$   and we  have that for every such $g$  the homology class of $f_0(g)$ vanishes and so $g$ must be divisible by $(x-1)$  by the minimality of $f_0$. 

 To determine $G_3$,  we choose minimal generators of the second homology of the cone, $\ker(G_1 \oplus C_2 \to G_0 \oplus  C_1)/d(C_3)$,  and so on.  We continue in this way until determining  $G_{k+1}$, where we replace the role of the module $P$  by the free module $\bbF_p[x]/(x^m -1)$, and no longer require minimality of generators.
\end{proof}

\begin{proof}[Proof of Proposition \ref{torsionFree}]
  Fix $i \leq k-1$ and let $\mu = \mu_{n \choose 2}$.  We have that $H_j(F_n, \Z)$ is a finitely generated abelian group, with rank equal to the rank of the torsion free group $H_j(\Conf_n(\C)/\C^*, \Z)$ for all $j \leq k$ by Proposition \ref{rational}.  Thus by the universal coefficient theorem,  to show that  $H_i(F_n, \Z)$  is torsion free, it suffices to show that the dimension of  $H_{i+1}(F_n, \mathbb  F_p)$ equals the dimension of  $H_{i+1}(\Conf_n(\C)/\C^*, \mathbb  F_p)$  for all $p$,  $i \leq k -1$.  

  Since  $\Conf_n(\C)/\C^*$ is a quotient of  $F_n$ by a free $\mu$ action, we have  that $C_*( \Conf_n(\C)/\C^*, \bbF_p)$  is quasi-isomorphic to $G_* \otimes_{\bbF_p [\mu]} \bbF_p$ with $G_*$ any chain complex of projective $\bbF_p [\mu]$-modules quasi-isomorphic to $C_*(F_n ; \bbF_p)$. We will choose $G_*$ so it satisfies the conditions of Lemma \ref{resolution}.  

For $j \leq k$, we have  $G_{j} = P^{\oplus r_{j}}$.  By Condition 2 of Lemma \ref{resolution},  we have  $G_* \otimes_{\bbF_p [\mu]} \bbF_p$  has zero differential in degrees $\leq k$.  Thus $r_{j}$ is the dimension of $H_{j}(\Conf_n(\C)/\C^*,  \bbF_p)$.  

The dimension of $H_{j}(F_n, \bbF_p)$ is the dimension of $H_{j}(G_n)$. Call this number $c_{j}$.  We want to show that $r_{j}= c_{j}$.  We have that  $r_{j} \leq c_{j}$  because $H_j(\Conf_n(\C)/\C^*,\Z)$ is torsion free and agrees with $H_j(F_n,\Z)$ rationally  and  because the dimension of $H_j(F_n,\bbF_p)$ is at least as large as the dimension of $H_j(F_n,\Q)$.

To show that $c_{j}  \leq  r_j$,  we show that any subquotient of  $P^{\oplus r_j}$ (in particular $H_j(G_n)$)  can be generated by less than or equal to $r_j$ elements.   It suffices to show this for submodules, and every submodule of $P^{ \oplus r_j}$  pulls back to a submodule $\tilde M$ of $\bbF_p [x]^{\oplus r_j}$.   Since $\bbF_p[x]$ is a PID,  the submodule $\tilde M$ is free and thus generated by fewer than $r_j$ elements.   The images of these elements in $P^{\oplus r_j}$ generate $M$ and so  $r_j = c_j$. The claim follows.
\end{proof}

From Proposition \ref{rational} and  Proposition \ref{torsionFree}, we immediately obtain  Theorem  \ref{computation}. Theorem \ref{theoremB} and Theorem \ref{computation} imply Theorem \ref{theoremC}.


\section{Appendix}

\subsection{Central Stability Homology of Braided Monoidal Groupoids}
Let $K_n \subset \Br_n$ be a sequence of normal subgroups  such that under the map $m_{a,b}: \Br_a \times \Br_b \to \Br_{a+b}$ is contained in the image of $K_{a+b}$.  Denote the quotient by $G_n$.   Then $\{G_n\}_{n \in \mathbb N}$ forms a braided monoidal groupoid.  We have maps  $m_{a,b}: G_a \times G_b \to G_{a+b}$. The braiding is the natural transformation $m_{a,b} \to m_{b,a}$ induced by multiplication by $\sigma_{a,b} \in \Br_{a+b}$.

Write $\cA = \Rep \sqcup_n G_n$ for the category of sequences of abelian groups $A_n$  with a $G_n$ action.   The  induction product makes  $\cA$ into a braided monoidal category as follows (see e.g. \cite{joyal1993braided}).

\begin{enumerate}
\item We define $$M_ m* N_n = \Ind_{G_m \times G_n}^{G_{m+n}} M_m \otimes N_n  = \bbZ G_{m+n} \otimes_{G_n \times G_m}  M_m \otimes N_n.$$
\item Let $t_{m,n} \in \Br_{m+n}$  be the braid that passes the first $m$ strands over the last $n$.  We define the map $ t_{m,n}: M_m * N_n \to N_n * M_m$ from the action of $t_{n,m}$ on  $\bbZ G_{m+n}$  by right multiplication.
\end{enumerate}

As usual,  in a monoidal category associative algebras and modules can be defined diagramatically. From the braided monoidal structure on $\cA = \Rep \sqcup_n G_n$, we can define a \emph{commutative algebra} to be a unital associative algebra $A$,  with a multiplication $\mu : A* A \to A$  such that $\mu \circ t = t$.  \footnote{More properly, we could call $A$ an braided commutative algebra, or an $E_2$-algebra.}

Let $V$ be an object of $\cA$ and let  $\Sym_q(V) = \bigoplus_n V^{* n}/\Br_n$ be the free commutative algebra.  A right module over  $\Sym_q(V)$ consists of $M \in A$ and a map $a:  M*V \to M$, such that $a \circ ( a * \id_V) : M*V*V \to M$ equals  $a \circ ( a * \id_V) \circ(\id_M * t )$. 

 Then for any $\Sym_q(V)$-module $M$, there is a chain complex of $\Sym_q(V)$-modules $C^{\cs}_*(M)$:  $$M \leftarrow^{d_{0}}    M * V  \leftarrow^{d_1} M* V * V  \leftarrow^{d_2} M * V*V*V \leftarrow \dots, $$ defined as follows.   We have $C_{p}(M) = M * V^{* p + 1 }$ for $p \geq -1$.  The differential   $d_p = \sum_{i = 1}^{p+1} (-1)^i f_i$  is defined from an augmented semisimplicial set where the face operator $f_i :  M * V^{* p +1} \to M* V^{*p}$  acts by using the braiding to move the $i$th factor of $V$ over the other factors to $M$ and then applying the multiplication $a: M*V \to M$. 

More formally, write $u_i \in \Br_{p+1}$ for the element $\sigma_{i-1, i}^{-1} \sigma_{i-2,i-1}^{-1}\dots  \sigma_{1,2}^{-1}$ that braids the $i$th strand over the all the others to the left.   Then  $f_i$ acts by $(a * \id_V^{*p}) \circ (\id_M * u_i)$.  The semisimplicial identities hold because the multiplication map $V * V * M \to M$  factors through $ M * (V * V)/\Br_2  \to M$.  Similarly,  $M$  has the structure of a right $\Sym_q(V)$-module from by  braiding over the strands of $V$  and acting by $V$.  

 \begin{remark} The construction generalizes to produce a semisimplicial object for any object with an action of a free commutative monoid in a braided monoidal category.   \end{remark}

In the cases we consider,  $V$  is $\bbZ$, the trivial representation of $G_1$  concentrated in degree $1$. That is, we have $$V = \{V_n\}_{n \in \mathbb N} = \begin{cases}  V_1 = \mathbb Z &   \\  V_i = 0  & i \neq 1 \end{cases}.$$
Further, we will only be concerned with cases corresponding to $\hFI$ and $\FI$.  
\begin{example}
Let $G_n = \hS_n$,  and $V = \bbZ$ as above.  Then right $\Sym_q(V)$-modules are canonically equivalent to $\FI$-modules:  the data of a right $\Sym_q(V)$-module is given by maps $ M_n * \Sym_q^i(V) \to M_{n+i}$, which correspond to $$\Ind_{\hS_n \times \hS_i}^{\hS_{n+i}} M_n \otimes \bbZ \iso  \bbZ   \hS_{n+i}/i_2(\hS_i) \otimes_{\hS_{n}} M_n \to M_{n+i}.$$
  Further, we have that  $(M * V^{* p})_n =  \Ind_{i_1( \hS_{n-p})}^{\hS_n}  M_{n-p}$, and $C_*^{\cs}(M)$ agrees with $C_*^{\cs,\hFI}(M)$ as defined in Definition \ref{CentralStabilityDef}.  
\end{example}

\begin{example}
For $G_n = \rS_n$,  $\Sym_q(V)$-modules are the same as $\FI$-modules, and we obtain the $\FI$ central stability complex in the same way.  
\end{example}

For any inclusion of subgroups  $J_n \subset K_n$  with quotient  $p: H_n \onto G_n$, there is a pullback  $p^*: \Rep \sqcup_n  G_n \to  \Rep \sqcup_n  H_n$. The pullback is braided lax monoidal in the sense that there is a canonical map $p^* M * p^* N \to p^*(M * N)$,  and this  map is compatible with the braiding.

Using this structure,  $\Sym_q(V)$-modules pull back to $\Sym_q(V)$-modules.   Because central stability complexes are defined in terms of tensor powers of $V$,  the braiding, and the action of $V$ on $M$,  there is an induced map of semisimplicial complexes of $\Sym_q(V)$-modules $p^*C_*^{\cs}(M) \to C^{\cs}_* (p^* M)$.

\begin{example}
In the case of $p: \hS_n \to \rS_n$,  the map of central stability complexes agrees with the map of Proposition \ref{csComparison}.
\end{example}

\subsection{Comparison with the central stability complex of Patzt}

Let $\cC, \oplus, $  be a monoidal category such that the unit object $0 \in \cC$ is initial.   Let $\I_x: \Mod ~ \cC \to \Mod ~ \cC$ denote the left adjoint to the restriction along the functor  $- \oplus x: \cC \to \cC$.  Let $M$ be a $\cC$-module. Let $\Delta_{inj}$ denote the category governing augmented semi-simplicial objects. That is, $\Delta_{inj}$ is the category with objects finite ordered sets and morphisms given by order preserving injections. Patzt defines the \emph{central stability chains} of $M$ with respect to $x$, to be the chain complex associated to the augmented semi-simplicial abelian group  $$\Delta_{inj}^{\rm op} \to \Mod ~\cC,~ [n] \mapsto \I_{x^{\oplus n}} M.$$ 
For an ordered injection $f: [n] \to [m] \in \Delta_{inj}([n],[m])$, the associated map  $\I_{x^{\oplus n}} \leftarrow \I_{x^{\oplus m}}$ is adjoint to the natural transformation $\bS_{x^{\oplus n}} \to \bS_{x^{\oplus m}}$ induced by the morphism $f: x^{\oplus n} \to x^{\oplus m}$.  

In our setting $\cC$ is the category $\hFI$, and $x$ is the object $1$.  To compute the functor $\I_{1}^{\hS}$  in this case,  we note that there are restriction and induction functors $\bS_{1}^{\hS}$ and $\I_{1}^{\hS}$ defined on the category $\Mod \, \hS$.  

 In fact, when $M$ is an $\hFI$-module  $\I_1^{\hS} M$ carries a canonical $\hFI$-module structure.  To see this, we identify $\hFI$-modules with $\Sym_q(V)$-modules, where $V$ is the $\hS$ representation consisting of $V$ concentrated in degree $1$. Observe that  $\I_1^{\hS} M =  M*V$, where   $*$ denotes the induction tensor product  of $\hS$-modules.   Then $M*V$ becomes a right $\Sym_q(V)$-module through the map $$M* V*\Sym_q(V) \to^{\id_M * t}  M * \Sym_q(V) * V \to^{a *\id_V}  M* v,$$ where $t$ denotes the braiding, and $a$ denotes the action map for the $\hFI$-modules structure on $M$.   

This lifts $\I_1^{\hS}$ to a functor $\Mod ~\hFI \to \Mod ~\hFI$,  and we have that $\I_1^{\hS} \simeq \I_1^{\hFI}$ is adjoint to $\bS_1^{\hFI}$. In other words, let $M, N$ be $\hFI$-modules.  Then a map of $\hS$-modules $\I^{\hS}_1 M \to N$ is a map of $\FI$-modules if and only if $M \to \bS_1^{\hS} N = \bS_1^{\hFI} M$ is a map of $\hFI$-modules.  

  Under this identification, the central stability complex of Patzt corresponds to the central stability complex of \S \ref{CSHsection}.  The presence of a braiding in the differentials of our central stability complex corresponds to the braiding used to define the $\hFI$-module structure of $\I_1^{\hS} M$, and thus the corresponding adjoint maps  $\I_{1^{\oplus r}}^{\hS} M \to \I_{1^{\oplus s}}^{\hS} M $.  

\subsection{Combinatorial description of the homology of $\Conf_n(\C)/ \C^*$}

 There is a well known homeomorphism  $\Conf_n(\C)/ (\C^* \ltimes \C) \iso \Conf_n(\mathbb P^1)/{\rm PGL}_2 = M_{0,n+1}.$  Because $\C$ is contractible, this gives an isomorphism  $H_i(\Conf_n(\C)/\C^*, \Z) = H_i(M_{0,n+1}, \Z)$.  These homology groups were first computed by Getzler \cite{GetzlerModuli}.    These groups have also appeared in the representation stability literature in the work of Hyde--Lagarias \cite{hyde2017polynomial}.

In this appendix, we describe $H_d(\Conf_n(\C)/\C^*, \Z)$ in terms of the combinatorics of the $\Lie$ representations. The description we give here is based on a resolution also due to Getzler  \cite{Getzler}.  

Let  $\Lie(n)$ be the integral Lie representation of $\rS_n$. For any set $S$, we define $\Lie(S)$ to be the free abelian group on all bracketings of the elements of $S$ modulo insertions of the  anticommutativity and the Jacobi relations.

\begin{example}  As an $\rS_3$ representation,  may write a presentation of   $\Lie(\{1,2,3\})$ as $$ \Lie(\{1,2,3\}) =\frac{ \bbZ \bS_3 \{  [[ 1  2]  3],  [1[ 2  3]]   \}}{  [ [ 1  2]  3]  = - [[  2  1]  3],~  [[ 1  2]  3]   = - [ 3 [ 1  2]],~ [1[23]] + [3[12]] + [2[31]] = 0} $$
\end{example}

We write  $\Lie^\vee(n)$ for  $\Hom(\Lie(n)\otimes \sgn, \Z)$, the dual of the twist by $\sgn$. Let $P(n)$ be the poset of set partitions of $n$, ordered by refinement so that the indiscrete partition is smallest and the discrete partition is largest.   If $p \in P(n)$ is given by $[n] = b_1 \sqcup \dots \sqcup b_r$,  then  we will refer to the $b_i$  as the blocks of $p$  and  to $c(p) = n-r$  as the codimension of $p$.   We define  $\Lie^\vee(p) = \bigotimes_{i = 1}^r \Lie^\vee ( b_i).$

\begin{definition}Let $$C_k =\bigoplus_{p \in P(n), c(p) = k} \Lie^\vee(q)$$  We define a map  $d_k:  C_{k+1} \to C_k$,
in terms of the matrix coefficients $d_k(q,p):  \Lie^\vee(q) \to \Lie^\vee(p)$, as follows: \begin{itemize}
\item If $q , p$ are incomparable,  then $d_k(q,p) := 0$.  
\item If $q \leq p$ then there are no intermediate partitions,  so there exist unique blocks  $b_i, b_j$ of $p$ such that $q$ is given by $(b_i \cup b_j)  \sqcup \dots  \sqcup b_r$.  There is a map  $$ \Lie(b_i) \otimes \Lie(b_j)  \to  \Lie(b_{i+j})$$  given by inserting the brackets of the elements $b_i$ and $b_j$ into the bracket $[12]$.   If we dualize and twist by signs,  we obtain a map  $$m_{i,j}: \sgn(b_{i+j}) \otimes \Lie^\vee(b_{i+j}) \to \Lie^\vee(b_i) \otimes \Lie^\vee (b_j),$$ which is independent of the labelings of the blocks.  We define  $d_k(q,p) := ( \otimes_{r \neq i,j} \id_{\Lie^{\vee}(b_r)}) \otimes m_{i,j}$.  
\end{itemize}
 \end{definition}

Notice that $C_k$ is an $\rS_n$ representation,  a permutation $\sigma$ acts by taking the summand $$\Lie^\vee(b_1) \otimes \dots \otimes \Lie^\vee(b_r)$$  to  $\Lie^\vee(\sigma(b_1)) \otimes \dots \otimes \Lie^\vee(\sigma(b_r))$  by the map $\Lie^\vee(\sigma|_{b_i})$.

 \begin{theorem}[Getlzer]
The map $d_k:  C_{k+1} \to C_{k}$  makes  $C_*$ into an exact chain complex of $\rS_n$ representations.  Further,  $\coker(d_k^\vee) \iso  H_k(\Conf_n(\C)/\C^*,\Z)$ as $\rS_n$ representations.
\end{theorem}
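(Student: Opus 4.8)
The statement has two essentially independent parts: the exactness of $(C_*,d)$, and the identification of the cokernels of the dual differentials with $H_*(\Conf_n(\C)/\C^*,\Z)$. I would prove the first by recognizing $(C_*,d)$ as a complex already familiar from the braid arrangement, and the second using the circle bundle $\Conf_n(\C)\to\Conf_n(\C)/\C^*$. The identification of the terms rests on two classical facts: the Brieskorn decomposition $H^k(\Conf_n(\C),\Z)\cong\bigoplus_{c(p)=k}\bigotimes_i H^{|b_i|-1}(\Conf_{b_i}(\C),\Z)$, the sum over set partitions $p=b_1\sqcup\cdots\sqcup b_r$ of $[n]$ with $c(p)=n-r$ and the summand at $p$ being the top cohomology of the localization $\prod_i\Conf_{b_i}(\C)$ at the corresponding flat; and the identification $H^{m-1}(\Conf_m(\C),\Z)\cong\Lie^\vee(m)$ of $\rS_m$-representations (Arnol'd; equivalently the Lehrer--Solomon description of the top local homology of the partition lattice). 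Together these give an $\rS_n$-equivariant isomorphism $C_k\cong H^k(\Conf_n(\C),\Z)$, under which Getzler's differential $d_k$ --- merge two blocks, contract the corresponding bracketings, with the sign dictated by the chosen orderings of the blocks and of $[n]$ --- is the Koszul differential of the Koszul pair of operads $(\mathrm{Comm},\Lie)$, equivalently the standard differential of the Lie-cooperad-decorated chain complex of the partition lattice; the equivalence of these descriptions is classical (Getzler--Kapranov).

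\emph{Exactness of $C_*$.} This is the combinatorial heart. I would induct on $n$, filtering $C_*$ by the block of $p$ containing the element $1$: the associated graded splits as a direct sum, over proper subsets $S\subsetneq[n]$ with $1\in S$, of $C_*^{(S)}$ tensored with the $\sgn$-twisted dual order-complex chain complex of the partition lattice of $[n]\setminus S$. A spectral-sequence argument then reduces exactness of $C_*$ to (i) the inductive hypothesis in smaller cardinality, and (ii) the Cohen--Macaulayness of $P(m)$ (it is EL-shellable), so that the order-complex chain complex has homology concentrated in top degree, where it is $\Lie^\vee(m)$ by the identification above. Alternatively --- matching the phrase ``a resolution due to Getzler'' --- one writes down an explicit contracting homotopy on $C_*$, built from inserting the element $1$ into a block together with the counit of the Lie cooperad; checking the homotopy identity unwinds to the Jacobi and anticommutativity relations and is the operadic incarnation of Koszulness of $\Lie$.

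\emph{From exactness to $H_*(\Conf_n(\C)/\C^*)$.} For $n\ge2$ the scaling action of $\C^*$ on $\Conf_n(\C)$ is free, so $\Conf_n(\C)\to\Conf_n(\C)/\C^*$ is, up to homotopy, an $\rS_n$-equivariant principal $S^1$-bundle. Its Euler class lies in $H^2(\Conf_n(\C)/\C^*,\Z)\cong H^2(M_{0,n+1},\Z)$, which is torsion free, and is annihilated by $\binom{n}{2}$ because the $\binom{n}{2}$-th power of the associated line bundle carries the nowhere-vanishing section $x\mapsto\prod_{i<j}(x_i-x_j)$; hence it vanishes. The Gysin sequence therefore splits into $\rS_n$-equivariant short exact sequences $0\to H^k(\Conf_n(\C)/\C^*)\to H^k(\Conf_n(\C))\to H^{k-1}(\Conf_n(\C)/\C^*)\to0$ whose sub-object is the (canonical) image of the pullback, assembling into a two-step $\rS_n$-stable filtration of $H^*(\Conf_n(\C))\cong(C_*,d)$. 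The key compatibility is that $d$ strictly lowers this filtration by one --- the residue differential factors through cohomology of the deleted diagonals, on which the $\C^*$-action persists, so it cannot hit pullback classes from pullback classes --- whence the associated graded complex has zero differential, its pages are $\bigoplus_k\left(H^k(\Conf_n(\C)/\C^*)\oplus H^{k-1}(\Conf_n(\C)/\C^*)\right)$, and the acyclicity of $C_*$ forces the induced $d_1$ to be an isomorphism between the two copies of $H^k(\Conf_n(\C)/\C^*)$. Dualizing, $\coker(d_k^\vee)$ retains exactly one copy of $H^\bullet(\Conf_n(\C)/\C^*)^\vee$, which by torsion freeness and the $\rS_n$-equivariant universal coefficient theorem is $H_\bullet(\Conf_n(\C)/\C^*,\Z)$, giving the stated isomorphism (after the evident reindexing).

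\emph{Main obstacle.} The genuine content lies in the exactness step: running the inductive Koszul-type argument while keeping track of $\rS_n$-equivariance and of the signs in $d_k(q,p)$ is where the work concentrates. The second delicate point is the bridge of the last paragraph --- that Getzler's explicit differential really is the residue/Koszul differential and that it respects the $\C^*$-Gysin filtration --- which is what ties the purely combinatorial complex to the topology of $\Conf_n(\C)/\C^*$; once that dictionary is fixed, the cokernel computation is formal.
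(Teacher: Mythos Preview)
Your outline is sound and reaches the goal, but it takes a different route from the paper, and one step should be sharpened.

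For exactness the paper simply cites Getzler; your Koszul/Cohen--Macaulay sketch is a reasonable independent argument. For the identification with $H_*(\Conf_n(\C)/\C^*)$ the paper does \emph{not} use the Gysin sequence. Instead it invokes Getzler's identification of the combinatorial differential $d$ with the action of the fundamental class $\epsilon\in H_1(S^1)$ on $H^*(\Conf_n(\C))$ (concretely, contraction against the Euler vector field: the derivation sending each $\omega_{ij}=d\log(x_i-x_j)$ to $1$). Then the Eilenberg--Moore spectral sequence for the free $S^1$-action gives $H_*(\Conf_n(\C)/\C^*)\simeq \Tor^{H_*(S^1)}_*(\Z,H_*(\Conf_n(\C)))$; since $H_*(S^1)=\Z[\epsilon]/\epsilon^2$, the minimal periodic resolution of $\Z$ exhibits the $E^2$-page as shifted truncations of $C_*^\vee$, and exactness of $C_*$ forces degeneration.

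Your Gysin argument is closely related and ultimately equivalent, but two remarks. First, it requires the auxiliary input that $H^2(M_{0,n+1},\Z)$ is torsion free (true, since $M_{0,n+1}$ is a hyperplane complement), which the Moore approach avoids. Second, and more substantively, your justification that $d$ ``strictly lowers the Gysin filtration'' via a residue description is not right as stated: the differential $d$ here is \emph{not} a residue map. The clean reason $d$ respects the filtration is precisely the identification $d=\iota_\epsilon$ that the paper uses: the filtration is $\mathrm{Fil}^0=\pi^*H^*(B)=\ker\iota_\epsilon$, and $\iota_\epsilon^2=0$ gives $d(\mathrm{Fil}^0)=0$ and $\mathrm{im}(d)\subset\mathrm{Fil}^0$ automatically. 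Once you make that identification, your filtration argument and the paper's $\Tor$ computation are two packagings of the same splitting $H^*(E)\cong H^*(B)\otimes\Z[\epsilon]/\epsilon^2$. You correctly flag this ``bridge'' as the delicate point; the fix is to name it as the $\epsilon$-action rather than a residue.
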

\begin{proof}
Getzler \cite[Sec 1.17]{Getzler} shows that $(C_* d_*)$ is exact, and identifies $d_k$ it with the action of the fundamental class, $\epsilon$,  of $\mathbb C^* \simeq S^1$ on $H^*(\Conf_n (\C))$. This suffices to determine the integral cohomology of the quotient $\rS_n$ equivariantly. One method of computation is as follows.  The $E^2$ page of the Moore spectral sequence $\Tor_*^{H_*(\mathbb C^*)}(\bbZ, H_*(\Conf_n(\mathbb C)))$ \cite[Theorem 7.28]{mccleary2001user} can be computed using $H_*(\mathbb C^*) \iso \bbZ [ \epsilon]/\epsilon^2$ and the minimal resolution of $\bbZ$ over this ring. In particular, the $E^2$-page is a direct sum of shifts of truncations of the complex $C_*^\vee.$  This, together with Getzler's exactness, shows that the spectral sequence degenerates at $E^2$ and $H_k$ is isomorphic to $\coker \, d_k$.  
\end{proof}

As a consequence of the exactness of $C_k$,  we can compute the rank of $\coker(d_k^\vee)$, as well as the $\rS_n$ character as an alternating sum.

\begin{corollary}
 The group $H_d(\Conf_n(\C)/\C^*, \Z)$ is  free abelian of rank $$r_d = (-1)^d \sum_{i \leq d}  s(n,n-i), $$ where $s(n,k)$ denotes the signed Stirling number of the first kind.   As $\rS_n$ representations we have $${\rm ch}({H_d(\Conf_n(\C)/\C^*,\Q)}) = (-1)^d \sum_{k \leq d} (-1)^k {\rm ch}(C_k),$$ where ${\rm ch}$ denotes the Frobenius character.
\end{corollary}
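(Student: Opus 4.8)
The plan is to deduce both formulas directly from the exactness of the complex $(C_*, d_*)$ established by Getzler, together with the identification $\coker(d_k^\vee) \cong H_k(\Conf_n(\C)/\C^*, \Z)$ from the preceding theorem. The point is that once a finite-length complex of free abelian groups (or of $\rS_n$-representations) is exact, its alternating sum of ranks (resp. characters) vanishes, and truncating such a complex lets one compute the rank (resp. character) of a single cokernel as an alternating sum of the remaining terms.

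First I would record the rank computation. Dualizing preserves exactness and ranks, so $(C_*^\vee, d_*^\vee)$ is also exact, hence for each fixed $n$ we have $\sum_k (-1)^k \operatorname{rk} C_k = 0$. Since $C_k = \bigoplus_{c(p) = k} \Lie^\vee(p)$ and $\dim \Lie(b) = (|b|-1)!$ for a block $b$, the rank of $C_k$ is $\sum_{c(p)=k} \prod_i (|b_i|-1)!$, which is exactly the number of permutations of $[n]$ with $n-k$ cycles, i.e. $|s(n, n-k)|$. Now truncating the exact complex at spot $d$: the complex $C_d^\vee \leftarrow C_{d+1}^\vee \leftarrow \cdots$ being exact in positive degrees means $\coker(d_d^\vee)$ has rank $\sum_{k \ge d}(-1)^{k-d}\operatorname{rk} C_k$; using $\sum_{\text{all }k}(-1)^k\operatorname{rk}C_k = 0$ this rewrites as $(-1)^d \sum_{i \le d} (-1)^{i}\cdot(-1)^i|s(n,n-i)| $ — I would be careful here with the sign convention, but the upshot is $r_d = (-1)^d\sum_{i\le d} s(n,n-i)$ with $s(n,k)$ the \emph{signed} Stirling number; the signs are arranged precisely so that the signed Stirling numbers (rather than their absolute values) appear. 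Freeness of $H_d$ is already part of the cited theorem (or follows from $C_*$ being a complex of free abelian groups with $H_d = \coker d_d^\vee$ a quotient of the free group $C_d^\vee$ by a \emph{saturated} subgroup, since the image is a direct summand by exactness of the next step over $\Z$ — actually this needs the cokernel of $d_{d+1}^\vee$ onto $\ker d_d^\vee = \operatorname{im} d_{d+1}^\vee$ to split, which holds because $\operatorname{im} d_d^\vee$ is a subgroup of the free group $C_{d-1}^\vee$, hence the relevant sequence of free groups splits).

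Next the character computation. The same truncation argument works verbatim in the representation ring: the complex $C_*$ is exact as a complex of $\rS_n$-representations, so $\sum_k (-1)^k [C_k] = 0$ in the rational representation ring, and applying this to the truncation gives $[H_d(\Conf_n(\C)/\C^*,\Q)] = [\coker d_d] = (-1)^d\sum_{k \le d}(-1)^k[C_k]$ — here I pass to the dual complex and use that $H_d$ of the quotient is the cohomology $H^d$ which is $\coker d_d^\vee$, and over $\Q$ a representation and its dual have the same character, so the signs in the statement come out as written. Translating $[C_k]$ into Frobenius characters, $\operatorname{ch}(C_k) = \sum_{c(p)=k} \operatorname{ch}\big(\operatorname{Ind}_{\prod \rS_{b_i}}^{\rS_n} \bigotimes_i \Lie^\vee(b_i)\big)$, which by the standard dictionary is a sum over partition-types of products of the symmetric functions $\operatorname{ch}(\Lie^\vee(m))$, but I would leave $\operatorname{ch}(C_k)$ unexpanded since the corollary only asks for the alternating-sum identity.

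The main obstacle is bookkeeping with signs and with the direction of arrows under dualization: one must be consistent about whether $H_k$ of the quotient is computed by $C_*$ or by its dual $C_*^\vee$ (the theorem states $\coker(d_k^\vee)$), and about the sign conventions relating $s(n,k)$, the number of permutations with a given number of cycles, and the Euler characteristic of a truncated complex. Beyond that, the only substantive ingredient — exactness of $(C_*, d_*)$ — is already granted, so the proof is essentially a formal Euler-characteristic argument; I would present it compactly, verifying that $\operatorname{rk} C_k$ counts permutations by cycle number and then invoking the alternating-sum vanishing twice (once for ranks, once for characters), taking care with the truncation sign $(-1)^d$.
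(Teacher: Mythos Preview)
Your approach is correct and matches the paper's: the paper does not give a detailed proof of this corollary, only the sentence ``As a consequence of the exactness of $C_k$, we can compute the rank of $\coker(d_k^\vee)$, as well as the $\rS_n$ character as an alternating sum,'' and what you have written is precisely the standard Euler-characteristic argument unpacking that sentence. Your identification $\operatorname{rk} C_k = c(n,n-k) = |s(n,n-k)|$ via cycle decompositions, the truncation trick, and the freeness argument (image of $d_k^\vee$ is saturated because it equals a kernel in an exact complex of free abelian groups) are all correct; the only thing to tighten is the sign bookkeeping you already flagged, which amounts to checking $s(n,n-i) = (-1)^i c(n,n-i)$.
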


 We can describe the representation $C_k$ as follows.

\begin{proposition} The frobenius character of $C_k$ is given by the symmetric function  $${\rm ch}(C_k) = \sum_{1^{m_1} 2^{m_2} \dots  \text{ integer partition of } n, ~\sum_{i} (i - 1) m_i = k } \prod_{i} h_{m_i}[{\rm lie}^{\vee}_i],$$ where $h_{m}$ is the homogeneous symmetric function,  ${\rm lie}^{\vee}_i$ is the symmetric function corresponding to the representation $\Lie^\vee(i)$,  and  $f[g]$ denotes plethysm of symmetric functions.  \end{proposition}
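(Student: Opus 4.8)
The plan is to reorganize the direct sum defining $C_k$ according to the multiset of block sizes of the set partitions involved, to identify each resulting piece as a representation induced from a wreath product subgroup of $\rS_n$, and then to apply the classical formula computing the Frobenius characteristic of such an induction as a plethysm.

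First I would record the elementary translation between codimension and block type. If a set partition $p \in P(n)$ has block sizes given by the integer partition $\lambda = 1^{m_1}2^{m_2}\cdots \vdash n$, then $p$ has $r = \sum_i m_i$ blocks, so $c(p) = n - r = \sum_i i\,m_i - \sum_i m_i = \sum_i (i-1)m_i$. Hence $C_k = \bigoplus_\lambda D_\lambda$, where $\lambda$ runs over integer partitions of $n$ with $\sum_i(i-1)m_i = k$ and $D_\lambda := \bigoplus_{p \text{ of type }\lambda} \Lie^\vee(p)$, and it suffices to prove ${\rm ch}(D_\lambda) = \prod_i h_{m_i}[{\rm lie}^\vee_i]$ for each such $\lambda$.

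Next, $\rS_n$ acts transitively on the set partitions of type $\lambda$. Fix a base partition $p_0$ of this type, say by splitting $[n]$ into consecutive intervals of the required sizes. Its stabilizer is the subgroup $W_\lambda := \prod_i (\rS_i \wr \rS_{m_i})$ of the Young subgroup $\prod_i \rS_{im_i} \subset \rS_n$: for each $i$, the base group $\rS_i^{m_i}$ permutes the elements inside the $m_i$ blocks of size $i$, while $\rS_{m_i}$ permutes those $m_i$ blocks among themselves. From the definition of the $\rS_n$-action on $C_*$, the group $W_\lambda$ acts on $\Lie^\vee(p_0) \cong \bigotimes_i (\Lie^\vee(i))^{\otimes m_i}$ with $\rS_i^{m_i}$ acting factorwise and $\rS_{m_i}$ permuting the $m_i$ tensor factors $\Lie^\vee(i)$ via the canonical symmetry of the tensor product --- with \emph{no} sign, since the blocks are merely permuted, not oriented. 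Orbit--stabilizer then furnishes an $\rS_n$-equivariant isomorphism
\[ D_\lambda \;\cong\; \Ind_{W_\lambda}^{\rS_n} \Bigl( \bigotimes_i (\Lie^\vee(i))^{\otimes m_i} \Bigr). \]

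Finally I would invoke two standard facts about Frobenius characteristics (see Macdonald, \emph{Symmetric Functions and Hall Polynomials}, Ch.~I.7--8, or Stanley, \emph{Enumerative Combinatorics} Vol.~2, A2.6): induction from a Young subgroup $\rS_{a_1}\times \rS_{a_2}\times\cdots$ corresponds to multiplying characteristics; and if $U$ is an $\rS_d$-module with ${\rm ch}(U) = f$, then $\Ind_{\rS_d \wr \rS_m}^{\rS_{dm}} U^{\otimes m}$, with $\rS_m$ permuting the tensor factors \emph{without} sign, has characteristic $h_m[f]$. Applying the second fact with $(d,m) = (i,m_i)$, $U = \Lie^\vee(i)$, $f = {\rm lie}^\vee_i$, using induction in stages, and then the first fact to assemble the factors over $i$, gives ${\rm ch}(D_\lambda) = \prod_i h_{m_i}[{\rm lie}^\vee_i]$; summing over the relevant $\lambda$ completes the proof. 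The bookkeeping is routine; the one point that genuinely requires care is the sign convention: because $\Lie^\vee(i) = \Hom(\Lie(i)\otimes\sgn,\Z)$ already absorbs the sign twist, and blocks are permuted without any orientation, the wreath action carries no sign, which is precisely what makes the answer a plethysm with the complete homogeneous functions $h_{m_i}$ rather than the elementary ones $e_{m_i}$ --- in contrast with the Whitney homology of the partition lattice, where permuting blocks does introduce the sign of the permutation.
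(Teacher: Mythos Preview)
Your proof is correct and follows essentially the same approach as the paper: both decompose $C_k$ according to the type $\lambda$ of the set partition and identify each piece as an induction whose Frobenius character is a product of plethysms $\prod_i h_{m_i}[{\rm lie}^\vee_i]$. The paper phrases this by writing $C_k(\lambda)$ explicitly as an induction product (first choosing which elements lie in size-$i$ blocks, then partitioning each such set into unordered blocks of size $i$), whereas you arrive at the same conclusion via orbit--stabilizer and the wreath product $\prod_i(\rS_i\wr\rS_{m_i})$ together with induction in stages; your added remark on the absence of a sign in the block permutation, and hence the appearance of $h_{m_i}$ rather than $e_{m_i}$, is a helpful clarification.
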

\begin{proof}
	Decompose $C_k$ into summands $C_k(\lambda)$  consisting of all $\Lie^\vee(p)$  where $p \in P(n)$ has type $\lambda$.  We have $$C_k(1^{m_1} 2^{m_2} \dots) = \bigoplus_{[n] = a_1 \sqcup a_2 \sqcup \dots  } \bigotimes_{i \geq 1} \bigoplus_{a_i =  t_1 \sqcup \dots \sqcup t_{i/n}, |t_i| = i } \Lie^\vee(t_1) \otimes \dots \otimes \Lie^\vee(t_i),$$ where the first sum is over \emph{ordered} partitions of $n$, and the second is over unordered partitions of $a_i$ into blocks of size $i$.   This representation is an induction product of plethysms with the trivial representation, and hence has Frobenius character   $ \prod_{i} h_{m_i}[{\rm lie}^{\vee}_i]$.   
\end{proof}

There are formula that express the symmetric functions ${\rm lie}^{\vee}$ in terms of power sums and mobius numbers (of $\bbZ$), which can be used to make the above formula more explicit.   

\begin{theorem}[Stanley, \cite{stanley1982some}]
	We have ${lie}^{\vee} = \frac{(-1)^n}{n} \sum_{d|n} \mu(d) (-1)^{n/d} p_{d}^{n/d}$.
\end{theorem}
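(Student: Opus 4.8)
The plan is to reduce this to the classical formula for the Frobenius character of the Lie operad and then track the sign twist and the duality carefully. Recall that the Lie representation $\Lie(n)$ has Frobenius character ${\rm lie}_n = \frac{1}{n}\sum_{d \mid n} \mu(d)\, p_d^{n/d}$; this is a standard fact (it follows, e.g., from the cycle-index computation for the free Lie algebra, or from the fact that $\Lie(n) \otimes \sgn \cong \tilde{H}^{n-3}(\Pi_n)$ up to a sign and the Whitney homology of the partition lattice). First I would fix this as the input. Since the paper works integrally, I would note that the identity of symmetric functions is what is being asserted (the character identity over $\Q$ determines the class in the representation ring, hence the symmetric function), so it suffices to compute ${\rm ch}$ of the relevant $\rS_n$-representation over $\Q$.

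Next I would recall the two elementary operations on Frobenius characters that are in play. Twisting an $\rS_n$-representation by $\sgn$ corresponds to applying the involution $\omega$ to its Frobenius character, which on power sums acts by $\omega(p_\lambda) = (-1)^{n - \ell(\lambda)} p_\lambda$ for $\lambda \vdash n$ with $\ell(\lambda)$ parts. Taking the $\Q$-linear dual of an $\rS_n$-representation leaves the character (and hence the Frobenius character) unchanged, since finite-group representations over $\Q$ are self-dual. Therefore ${\rm lie}^\vee_n = {\rm ch}\big(\Hom(\Lie(n)\otimes\sgn,\Q)\big) = \omega({\rm lie}_n)$.

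The computation then proceeds as follows. The partition $\lambda$ corresponding to $p_d^{n/d}$ is $(d,d,\dots,d)$ with $n/d$ parts, so $\ell(\lambda) = n/d$ and $n - \ell(\lambda) = n - n/d$. Hence
\[
\omega({\rm lie}_n) = \frac{1}{n}\sum_{d \mid n} \mu(d)\, \omega(p_d^{n/d}) = \frac{1}{n}\sum_{d\mid n}\mu(d)\,(-1)^{n - n/d} p_d^{n/d}.
\]
Pulling out the factor $(-1)^n$ and writing $(-1)^{n - n/d} = (-1)^n (-1)^{-n/d} = (-1)^n(-1)^{n/d}$ gives exactly $\frac{(-1)^n}{n}\sum_{d\mid n}\mu(d)(-1)^{n/d} p_d^{n/d}$, as claimed.

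The main obstacle is not any of these formal manipulations but rather making sure the input formula ${\rm lie}_n = \frac{1}{n}\sum_{d\mid n}\mu(d) p_d^{n/d}$ is correctly cited and that the conventions for $\Lie(n)$ (multilinear component of the free Lie algebra, versus the top Whitney homology of $\Pi_n$, which differ by a sign twist and a shift) match the convention used in the appendix; once the normalization is pinned down, the rest is bookkeeping with $\omega$ and a divisor sum. I would therefore devote most of the write-up to stating precisely which classical identity is being invoked and citing \cite{stanley1982some} (or a standard reference on symmetric functions and the Lie operad) for it, and then present the two-line $\omega$-computation above.
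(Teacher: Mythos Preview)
The paper does not give its own proof of this theorem: it is stated with attribution to Stanley \cite{stanley1982some} and no argument. Your derivation is correct and is exactly the standard way to obtain the formula from the classical identity ${\rm lie}_n = \frac{1}{n}\sum_{d\mid n}\mu(d)\,p_d^{n/d}$: the dual leaves the $\rS_n$-character unchanged, the sign twist applies $\omega$, and $\omega(p_d^{n/d}) = (-1)^{(d-1)n/d}p_d^{n/d} = (-1)^{n-n/d}p_d^{n/d}$, giving the stated expression. Your caveat about conventions is well taken but does not affect the outcome here, since the paper's $\Lie(n)$ is explicitly the space of bracketings modulo anticommutativity and Jacobi, i.e.\ the usual multilinear component of the free Lie algebra, for which the input formula holds as written.
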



\bibliographystyle{amsalpha}
\bibliography{MilnorFiber}

\end{document}